\algrenewcommand\algorithmicrequire{\textbf{Input:}}
\algrenewcommand\algorithmicensure{\textbf{Output:}}
\newtheorem{theorem}{Theorem}[section]
\newtheorem{lemma}[theorem]{Lemma}
\newtheorem{proposition}[theorem]{Proposition}
\newtheorem{definition}[theorem]{Definition}
\newenvironment{proof}{\begin{trivlist}
    \item[\hskip\labelsep{\bf Proof.}]}{$\hfill\Box$\end{trivlist}}
\theoremstyle{plain} \theorembodyfont{\rmfamily}
\newtheorem{remark}[theorem]{Remark}}
\numberwithin{equation}{section}
\numberwithin{figure}{section}
\numberwithin{table}{section}
\newcommand{\bsi}{{\boldsymbol{i}}}
 \newcommand{\hPhi}{{\bar{\Phi}}}
  \newcommand{\rPhi}{{\Phi_{\rm r}}}
   \newcommand{\dPhi}{{\Phi_{\rm d}}}
     \newcommand{\lPhi}{{\Phi_{\ell}}}
   \newcommand{\wPhi}{{\Phi_{\rm w}}}
     \newcommand{\rg}{{{\rm r}}}
   \newcommand{\dt}{{{\rm d}}}
   \newcommand{\lin}{{{\ell}}}
   \newcommand{\dw}{{{\rm w}}}
   \newcommand{\AtA}{\boldsymbol{A}}
   \newcommand{\Atb}{\boldsymbol{b}}
      \newcommand{\PtP}{\boldsymbol{P}}
   \newcommand{\Ptd}{\boldsymbol{d}}
\newcommand{\argmin}{{\mathrm{argmin}}}
\renewcommand{\hat}{\widehat}
\newcommand{\vertiii}[1]{{\left\vert\kern-0.25ex\left\vert\kern-0.25ex\left\vert #1 
\right\vert\kern-0.25ex\right\vert\kern-0.25ex\right\vert}}
\newcommand{\review}{}
\title{Gradient flows and randomised thresholding: sparse inversion and classification}
\author{Jonas Latz\thanks{The author thanks the anonymous reviewers for their helpful remarks.}}
\date{\footnotesize Maxwell Institute for Mathematical Sciences and \\ \footnotesize School of Mathematical and Computer Sciences, Heriot-Watt University, Edinburgh, UK\\
\url{j.latz@hw.ac.uk}}
\begin{document}
\maketitle
\begin{abstract}
    Sparse inversion and classification problems are ubiquitous in modern data science and imaging. They are often formulated  as non-smooth minimisation problems. In sparse inversion, we minimise, e.g., the sum of a data fidelity term and an L1/LASSO regulariser. In classification, we consider, e.g., the sum of a data fidelity term and a non-smooth Ginzburg--Landau energy. Standard (sub)gradient descent methods have shown to be inefficient when approaching such problems. Splitting techniques are much more useful: here, the target function is partitioned into a sum of two subtarget functions -- each of which can be efficiently optimised. Splitting proceeds by performing optimisation steps alternately with respect to each of the two subtarget functions. 
    
    In this work, we study splitting from a stochastic continuous-time perspective. Indeed, we define a differential inclusion that follows one of the two subtarget function's negative sub\review{differential} at each point in time. The choice of the subtarget function is controlled by a binary continuous-time Markov process. The resulting dynamical system is a stochastic approximation of the underlying subgradient flow. We investigate this stochastic approximation for an L1-regularised sparse inversion flow and for a discrete Allen-Cahn equation minimising a Ginzburg--Landau energy. In both cases, we study the longtime behaviour of the stochastic dynamical system and its ability to approximate the underlying subgradient flow at any accuracy. We illustrate our theoretical findings in a simple sparse estimation problem and also in  low- \review{and high-dimensional} classification problem\review{s}.
\end{abstract}
\textbf{Keywords:} sparsity, classification,  stochastic processes, piecewise-deterministic Markov processes, subgradient flows, Allen--Cahn equation. \\
\textbf{MSC2020:} 37N40 $\bullet$ 
34F05 $\bullet$ 
62J07 $\bullet$ 
35Q56 

\section{Introduction}
Inverse and classification problems arise in a multitude of scientific disciplines. In inverse problems, we aim at estimating parameters of mathematical models through indirect measurements. A usual example is the reconstruction of an image of \review{a} patient's brain structure given measurements from X-rays that \review{are} attenuated by the patient's brain. In classification, our goal is to partition data into two or more different classes. In the previous example, we could aim to partition the pixels in the reconstructed image with respect to the area of the brain they refer to.

Inverse and classification problems can often be phrased as optimisation problems: minimising the sum of a negative loglikelihood -- representing the relationship between data and parameter -- and a regulariser -- forcing the parameter to satisfy certain regularity properties. In image reconstruction and classification, certain non-smooth regularisers have gained popularity.   \review{ In image reconstruction or, generally, sparse inversion}, an $\ell_1$/LASSO-regulariser ensures that the image or object of interest is represented sparsely\review{, e.g., within a dictionary or on a basis}, see \cite{CANDES2011}. In binary classification, a (possibly non-smooth) Ginzburg--Landau functional is used to separate data \review{into two classes} by forcing the values to concentrate close to one of two values, e.g., $-1$ and $1$, see \cite{Bertozzi}. \review{Both, sparse inversion and binary classification are highly relevant in modern data science and imaging, see, e.g. \cite{Ahishakiye,Yang2016-wp,Zhao} and \cite{MIN20095256,Guoshao,Tufail2020-ee}, respectively.}

To understand the solution procedure of these optimisation problems, the associated subgradient flows have been studied extensively, see, e.g.,  \cite{BENES2004,BRUCK1975,Bungert_2019,Marcellin2006}. \review{The subgradient flows are interesting as they give a continuous-time representation of a subgradient descent-type algorithm.} Due to the non-smoothness of the minimisation problem, the associated flows are often non-smooth as well. Thus, when attempting to construct an optimisation algorithm from \review{a} subgradient flow, one has to be very careful: Non-smooth dynamical systems tend to be difficult to solve with traditional numerical techniques. In practice, the non-smoothness is often relaxed or optimisation methods are constructed with more advanced numerical techniques. In this work, we follow the second approach: we propose and discuss randomised splitting techniques in a continuous-time setting. 
\subsection{Problem setting and motivation.} \label{subsec_Problem_Setting}
\paragraph{Stochastic approximations of differential inclusions.} Throughout this work, we study differential inclusions on a space $X:=\mathbb{R}^n$ that are of the form 
$$
\frac{\dt{x}(t)}{\dt t} \in F({x}(t)) + G(x(t)) \qquad (t \geq 0), \qquad \qquad x(0) = x^{(0)} \in X,  
$$
where $F,  G: X \rightarrow 2^{X}$ describe set-valued maps (usually negative sub\review{differentials} of potentials, see \cite{Brezis} for details).
Here, we are particularly interested in cases, where the two \emph{subflows} 
\begin{align*}
    \frac{\dt{x_F}(t)}{\dt t} &\in 2F({x_F}(t))  \qquad (t \geq 0),\qquad \qquad  x_F(0) = x^{(0)}_F \in X, \\
     \frac{\dt{x_G}(t)}{\dt t} &\in  2G(x_G(t)) \qquad (t \geq 0), \qquad \qquad  x_G(0) = x^{(0)}_G \in X,
\end{align*}
are well-understood, i.e. they can be solved analytically or simulated in an efficient, stable, and accurate way. \review{In general, t}here is no \review{exact} way of combining the paths $({x_F}(t))_{t \geq 0}$ and $({x_G}(t))_{t \geq 0}$ in a way that reconstructs $({x}(t))_{t \geq 0}$. In this work, we study the stochastic approximation of $({x}(t))_{t \geq 0}$ that we obtain through following one of the trajectories $({x_F}(t))_{t \geq 0}$ or $({x_G}(t))_{t \geq 0}$ for a random waiting time before switching to the other, which we  then again follow for a random waiting time and so on. \review{This stochastic approximation is indeed a randomised splitting technique.}

We now describe th\review{e} dynamical system more particularly. Let $(\Omega, \mathcal{F}, \mathbb{P})$ be a probability space and let $\bsi: \Omega \times [0, \infty) \rightarrow \{0,1\}$ be a continuous-time Markov process (CTMP) on $\{0,1\}$ that  has transition rate matrix
\begin{equation*}
    \begin{pmatrix}-\lambda & \lambda \\ \lambda & -\lambda \end{pmatrix},
\end{equation*}
for some \emph{switching rate} $\lambda > 0$. That means, $(\bsi(t))_{t \geq 0}$ is a piecewise constant, right-continuous process that stays in a state for an exponentially distributed waiting time $\delta T \sim \mathrm{Exp}(\lambda)$ with mean $\mathbb{E}[\delta T] = 1/\lambda$ \review{before switching to the other state}.

The value $\bsi(t)$ determines which of the flows $({x_F}(t))_{t \geq 0}$ or $({x_G}(t))_{t \geq 0}$ we follow at time $t \geq 0$. Indeed, we consider the following dynamical system
$$
\frac{\dt \hat{x}(t) }{\dt t} \in  \begin{cases} 2F(\hat{x}(t)), \qquad &\text{ if } \bsi(t) = 0, \\
2G(\hat{x}(t)), \qquad &\text{ if } \bsi(t) = 1, 
\end{cases} \qquad \qquad (t \geq 0).
$$
Following the work by Robbins and Monro \cite{RobbinsMonro}, we refer to $(\hat{x}(t))_{t \geq 0}$ as a \emph{stochastic approximation} of $(x(t))_{t \geq 0}$. In a setting where the differential inclusions are actually differential equations and sufficiently smooth, one can sometimes show that $(\hat{x}(t))_{t \geq 0} \rightarrow (x(t))_{t \geq 0}$ in a weak sense, as $\lambda \rightarrow 0$. We refer to \cite{Jin2021,Latz2021} for results of this type and a general perspective on stochastic approximation in continuous time.

\paragraph{Sparse inversion and classification.} \review{Throughout this work, we study the stochastic approximation of two specific subgradient flows that are used to solve two different non-smooth optimisation problems: a \emph{sparse inverse problem} and a \emph{binary classification problem}, respectively.}

\review{In both cases, we minimise the sum of a linear-quadratic data misfit and a certain regulariser. This regulariser is an $\ell_1$-norm in the sparse inversion case and a Ginzburg--Landau energy in the binary classification problem.}
We introduce the particular problems and subgradient flows in Sections~\ref{Sec_Sparse_Inv} and \ref{Sec_Class}, respectively.
In both these cases, a direct (explicit) numerical simulation of the flows is highly inefficient due to non-smoothnesses in both differential inclusions. However, both differential inclusions can be separated into a linear ODE \review{(the gradient of data misfit and linear-quadratic part of the regulariser)} and a non-smooth part \review{(the subgradient of the non-smooth part of the regulariser)}. The linear part alone can be solved analytically or (more reasonably) discretised at a high accuracy in a stable way. \review{The non-smooth part alone can, in both cases, be solved analytically and results in a thresholding -- while this is natural for the $\ell_1$-norm, the Ginzburg--Landau energy is modelled in a way that achieves this property}.
This thresholding naturally becomes a randomised thresholding within the stochastic approximation. \review{Indeed, 
\begin{itemize}
    \item in sparse inversion, we subtract or add a random variable to move towards zero by a random amount, and
    \item in binary classification, we subtract or add a random variable to move towards $-1$ or $1$ (the one that is already closer) by a random amount.
\end{itemize}
We discuss this thresholding more particularly in Subsections~\ref{subsec_stochAppro_l1} and \ref{subsec_stochAppro_AC}, respectively.}

For both these problems, (discrete-time) splitting techniques that separate linear and non-smooth part to follow the linear flow and threshold are known and popular in practice: forward-backward splitting for sparse inversion and Merriman--Bence--Osher (MBO) for classification. In the present work, we analyse the non-discretised stochastic approximation leading to randomised versions of these splitting techniques.

\paragraph{Stochastic?} The introduction of stochasticity appears a bit artificial in this setting. Indeed, the algorithms mentioned above are fully deterministic.  We employ it for four different reasons: Firstly, the stochasticity allows us to define a Markovian homogeneous-in-time dynamical system. This not only simplifies the analysis, but also matches the discrete splitting algorithms\review{, which is also a Markovian homogeneous-in-time dynamical system}. Secondly, in the continuous setting, we cannot usually hope for convergence of such two-step methods to a single point. Analysing ergodicity and the convergence to a probability measure feels easier for a random process. This probability measure describes what is called \emph{implicit regularisation} in the machine learning context and sometimes also of interest from an uncertainty quantification perspective. Speaking about machine learning --  thirdly, stochastic optimisation methods have recently gained popularity in machine learning where non-convex problems need to be regularised and optimised. When sparsity shall be enforced, stochastic optimisation methods could profit from a randomised thresholding instead of a deterministic thresholding\review{, consider, e.g., \cite{pmlr-v162-mishchenko22b}}. Fourthly,  deep neural networks are often compositions of smooth functions and tresholdings. Ideas presented here could possibly be used to analyse randomisation in neural networks\review{; a connection that is particularly interesting as neural networks are popular tools in both, image reconstruction and (binary) classification}.

\subsection{Contributions and outline.}
The main contributions of the present work are:
\begin{itemize}
    \item We propose and study stochastic approximations of subgradient flows in sparse inversion and binary classification. The stochastic approximations are continuous\review{-time}, randomised models for usual splitting schemes.
    \item We analyse the long-time behaviour of the processes. Indeed, we see that the stochastic approximation is usually ergodic in case of the sparse inversion, while we need strict assumptions to show ergodicity in the classification case.
    \item We study the convergence of the stochastic approximations to the underlying subgradient flows, as the switching rate $\lambda \rightarrow \infty.$ Thus, we show that the stochastic approximations can approximate the subgradient flow at any accuracy.
    \item We illustrate our findings in a simple sparse estimation problem and \review{two} spatial classification problem\review{s}.
\end{itemize}
We consider non-smooth piecewise deterministic Markov processes, where the non-smoothness arises from the deterministic processes rather than only the stochastic switching/jumping (as opposed to, e.g., the Zigzag method, \cite{Bierkens}). Thus, the techniques employed in our analysis may be of independent interest.

This work is organised as follows. In Sections~\ref{Sec_Sparse_Inv} and \ref{Sec_Class}, we introduce and discuss the considered sparse inversion and classification problems, respectively, and derive our stochastic approximations. We show that all the considered processes \review{possess the so-called Feller property} and derive their generators in Section~\ref{Sec_Fell}. We use this information to show ergodicity in Section~\ref{sec_longtime} and the convergence of the stochastic approximation to their related subgradient flow in Section~\ref{sec_perturbed}. In Section~\ref{Sec_Illust} we illustrate our results in \review{numerical} examples, before concluding in Section~\ref{Sec_Conclusions}.
\section{Sparse inversion} \label{Sec_Sparse_Inv}
We commence with the sparse inversion problem. In the following, we introduce this problem, the subgradient flow, and its stochastic approximation.

We consider an optimisation problem of the form
\begin{equation} \label{Eq:Optprob}
    \mathrm{argmin}_{\theta \in X} \hPhi(\theta), 
\end{equation}
where
\begin{equation*}
    \hPhi(\theta) := \frac{1}{2}\|A \theta - b \|^2_2 + \|\theta\|_1 \quad (\theta \in X)
\end{equation*}
 and $X = \mathbb{R}^n$, $Y = \mathbb{R}^m \ni b$,  $A \in  \mathbb{R}^{m \times n}$, with $n, m \in \mathbb{N}:= \{1,2,3,\ldots\}$. In the following, we assume that $\mathrm{rank}(A) = n$.  We denote the two parts of the target function $\hPhi$ by $\dPhi := \frac{1}{2}\|A \cdot - y \|^2_2$ and $\rPhi := \| \cdot \|_1$. We abbreviate $\AtA := A^TA$ and $\Atb := A^Tb$.
\review{As the optimisation problem \eqref{Eq:Optprob} is non-smooth, we need to introduce a {subdifferential} to discuss possible solutions. Throughout this work, we consider the \emph{proximal subdifferential} which is defined for a potential $\Phi: X \rightarrow \mathbb{R}$ by $$
\partial \Phi(x) := \left\lbrace v \in X : \exists \varepsilon, r >0  : \langle v, y-x \rangle \leq \Phi(y) - \Phi(x) + r \|x-y\|^2_2 \  (y \in Y: \|x-y\|_2 < \varepsilon)\right\rbrace \ (x \in X).
$$}
To solve problem \eqref{Eq:Optprob}, we need to find a stationary point of $\hPhi$, e.g. a point $\theta_* \in X$ at which
\begin{equation}
\nabla \dPhi(\theta_*) + g    =\AtA \theta_* - \Atb + g = 0,
\end{equation}
where $g \in \partial \rPhi(\theta_*)$.
The sub\review{differential} is given by $\partial \rPhi(\theta) := G(\theta_1) \times \cdots \times G(\theta_n)$, where $$ G(\theta'):= \begin{cases} \{1\}, &\text{ if } \theta' > 0, \\ \{-1\}, &\text{ if } \theta' < 0, \\ [-1,1], &\text{ if } \theta' = 0\end{cases} \qquad \qquad (\theta' \in \mathbb{R}).$$

A natural way to find this stationary point $\theta_*$ \review{is} to follow the  subgradient flow, which we call \emph{sparse inversion flow}:

\begin{equation} \label{EQ:full_subgrad}
   \frac{\dt{\zeta}(t)}{\dt t} \in - \frac{1}{2}\nabla \dPhi(\zeta(t)) - \frac{1}{2}\partial \rPhi(\zeta(t)) \quad (t > 0), \qquad \zeta(0) = \zeta^{(0)}, 
\end{equation}
for some initial value $\zeta^{(0)} \in X$. In this and other dynamical systems, we sometimes emphasise the dependence on the initial value by writing $(\zeta(t, \zeta^{(0)}))_{t \geq 0} := (\zeta(t))_{t \geq 0}$.  We scale the right-hand side in \eqref{EQ:full_subgrad} by $1/2$ for convenience.
This dynamical system has a unique, locally absolutely continuous solution, see, e.g. Theorems 2.9 and 3.2 in \cite{Marcellin2006}. The solution is also locally Lipschitz continuous and $(\zeta(t))_{t \geq 0}$ converges to the stationary point of $\hPhi$; we discuss this in Proposition~\ref{prop_det_conv}.

Essentially based on this subgradient flow, several techniques have been proposed in the past decades to find $\theta_*$, such as subgradient descent \cite{Shor1985} or the already mentioned forward-backward splitting  \cite{Combettes2011,Goldstein}.
Indeed, the basic subgradient descent method follows the subgradient flow by discretising the system with a forward Euler discretisation
$$
\zeta^{(k+1)} \leftarrow \zeta^{(k)} - \frac{h_k}{2}\left(\nabla\dPhi(\zeta^{(k)}) + g_k \right), \qquad  g_k \in \partial \rPhi(\zeta^{(k)}), \qquad (k=0, 1,\ldots),
$$
where $(h_k)_{k=0}^\infty$ is a sequence of step sizes.
This method has disadvantages in practice that are easily explained by the missing Lipschitz continuity of $\partial \rPhi$. The  forward-backward splitting algorithm is considerably better: The idea is that splitting the system into two parts \review{such that each of them can be treated easily}. Thus, forward-backward splitting proceeds by alternately minimising $\dPhi$ and $\rPhi$ using gradient and proximal step, respectively, see also Algorithm~\ref{algo_fwbwS}. In the algorithm, we denote the signum function $\mathrm{sgn}(x) := \mathbf{1}[x > 0] -  \mathbf{1}[x < 0]$, where $\mathbf{1}[\mathrm{true}] = 1$, $\mathbf{1}[\mathrm{false}] = 0$,  the positive part $f_+ = \max\{f, 0\}$ of a function $f$, and the Hadamard (element-wise) product $\odot$. Here and throughout the rest of this work, we sometimes apply scalar functions to vectors in which case we mean component-wise application. The `$\mathrm{argmin}$' in the last line of the algorithm is the proximal operator of $\rPhi$ with stepsize $h_k$.

\begin{algorithm} 
\caption{Forward-backward splitting for \eqref{Eq:Optprob}} \label{algo_fwbwS}
\begin{algorithmic}
\Require initial value $\zeta^{(0)}$, sequence of positive step sizes $(h_k)_{k=0}^\infty$
\For{$k=1,\ldots$}
\State $\hat\zeta \leftarrow \zeta^{(k-1)} - h_k\nabla \dPhi(\zeta^{(k)}) $
\State $\zeta^{(k)} \leftarrow \argmin_{\zeta'} \{h_k\rPhi(\zeta') + \frac{1}{2} \|\zeta' - \hat\zeta\|^2\}$ = $\mathrm{sgn}(\hat\zeta)\odot(|\hat\zeta| - h_k)_+$ \Comment{proximal operator of $\rPhi$}
\EndFor 
\end{algorithmic}
\end{algorithm}
In the following, we utilise this splitting idea to derive our stochastic approximation.
\subsection{The stochastic approximation} \label{subsec_stochAppro_l1}
We now split the dynamical system \eqref{EQ:full_subgrad} into two subflows:
\begin{align}
     \frac{\dt{\zeta^\dt}(t)}{\dt t}&= - \nabla\dPhi({\zeta}^\dt(t)) \quad (t > 0), \qquad \zeta^\dt(0) = \zeta^{\dt,0},  \label{eq:ODE_dt} \\ 
      \frac{\dt{\zeta^\rg}(t)}{\dt t} &\in - \partial \rPhi(\zeta^\rg(t)) \quad (t > 0), \qquad \zeta^\rg(0) = \zeta^{\rg,0}\label{eq:ODE_rg}, 
\end{align}
for initial values $\zeta^{\dt,0}, \zeta^{\rg,0} \in X$. The linear differential equation \eqref{eq:ODE_dt} has a closed form solution. We have: 
\begin{equation*}
   \zeta^\dt(t) = \AtA^{-1}\Atb + \exp(-t\AtA)(\zeta^{\dt,0} - \AtA^{-1}\Atb) \qquad (t \geq 0),
   \end{equation*}
  where, of course, $\exp(M) := \sum_{j=0}^\infty \frac{1}{j!}M^j$ is the matrix exponential of $M \in \mathbb{R}^{n \times n}$.
   For the subgradient flow \eqref{eq:ODE_rg}, we can find the solution: 
   \begin{equation*}
         \zeta^\rg(t) = \mathrm{sgn}(\zeta^\rg)\odot(|\zeta^\rg| - t)_+.
   \end{equation*}
For the uniqueness of this solution, we refer the reader again to \cite{Marcellin2006}. Note that $(\zeta^\rg(t))_{t \geq 0}$ \review{equals} the proximal operator \review{of the $\ell^1$-norm} in Algorithm~\ref{algo_fwbwS}.

In the following, we study the stochastic approximation of \eqref{EQ:full_subgrad} that alternately follows the flows  \eqref{eq:ODE_dt} and \eqref{eq:ODE_rg}. To control which subflow is employed at which time, we employ the continuous-time Markov process $(\bsi(t))_{t \geq 0}$ that we have defined in Subsection~\ref{subsec_Problem_Setting}. 
The \emph{stochastic approximation} $(\bsi(t), \theta(t))_{t \geq 0}$ of $(\zeta(t))_{t \geq 0}$ is then defined by
\begin{equation} \label{EQ:stochastic_approximation_sparse_inv}
    \frac{\dt \theta(t)}{\dt t} \in  \begin{cases} \{- \nabla\dPhi(\theta(t)) \}, &\text{if } \bsi(t) = 0, \\
    - \partial \rPhi(\theta(t)), &\text{if } \bsi(t) = 1
    \end{cases} \qquad (t > 0), \qquad \theta(0) = \theta^{(0)},
\end{equation}
where $\theta^{(0)} \in X$ is an initial value. This stochastic process is well-defined with probability 1, as it is a piecewise construction of well-defined functions on non-empty intervals -- note that the waiting times in between two jumps of $(\bsi(t))_{t \geq 0}$ is exponentially distributed and, thus, strictly positive with probability 1. 

\begin{remark} \label{rem_thresholding} 
Let $I := (\widehat{T}_{k-1}, \widehat{T}_k]$ be an interval such that $\bsi(\cdot)|_I \equiv 1$, i.e. $(\theta(t))_{t \in I}$ follows the sub\review{differential} of $\rPhi$. We set $\delta T := \widehat{T}_{k}-\widehat{T}_{k-1}$. Within the interval $I$, the subgradient flow performs the following action:
$$
 \theta_i(\widehat{T}_k) = \begin{cases}(\theta_i(\widehat{T}_{k-1})-\delta T)_+, &\text{ if } \theta_i \geq 0, \\ -(-\theta_i(\widehat{T}_{k-1})-\delta T)_+, &\text{ otherwise.}\end{cases}
$$
This is a randomised thresholding operation, where the threshold $\delta T \sim \mathrm{Exp}(\lambda)$. Thus, the dynamical system uses a randomised thresholding to approach the minimiser of the sparsity enforcing regulariser $\rPhi$.
\end{remark}

We give a simple example for a subgradient flow $(\zeta(t))_{t \geq 0}$ and its stochastic approximation $(\theta(t))_{t \geq 0}$ in Figure~\ref{fig:simple_example}. For small $\lambda$, we are able to recognise the structure of the stochastic approximation: smooth curves targetting the minimiser of $\dPhi$ and linear curves pointing towards the minimiser of $\rPhi$. For large $\lambda$, we see that the stochastic approximation approaches the underlying subgradient flow.

\begin{figure}[tbh]
    \centering
    \includegraphics[scale = 0.65]{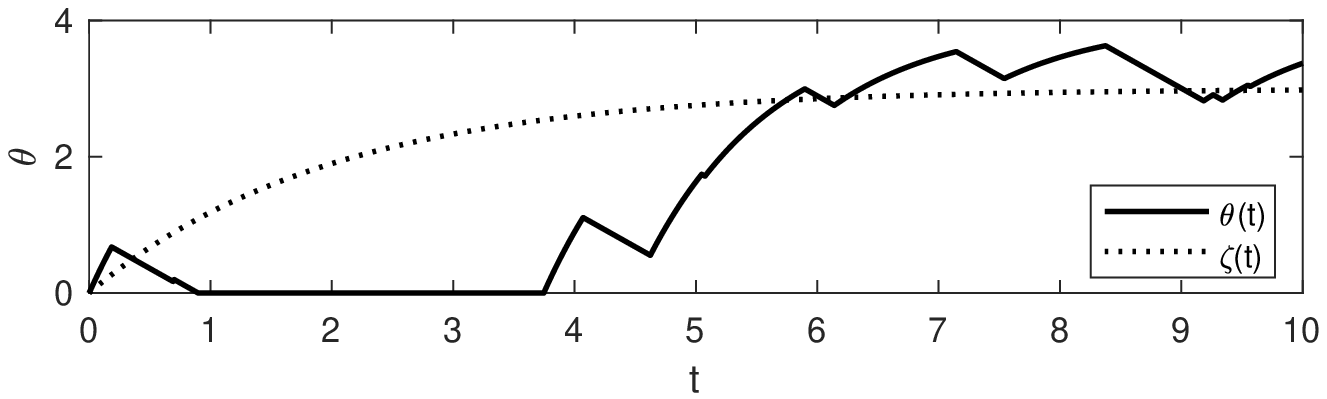}
    \includegraphics[scale = 0.65]{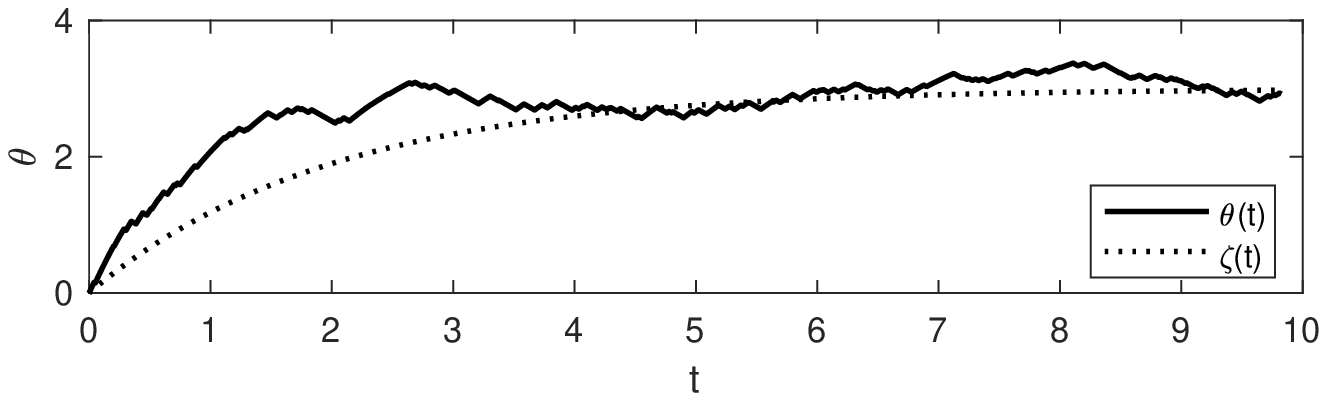}
    \includegraphics[scale = 0.65]{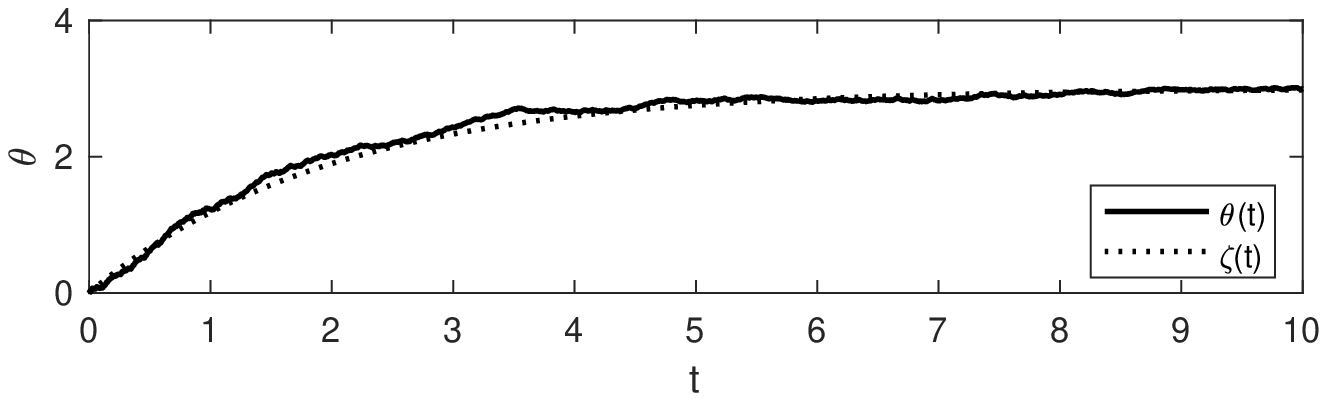}

    \caption{Subgradient flow \eqref{EQ:full_subgrad} and its stochastic approximation \eqref{EQ:stochastic_approximation_sparse_inv} minimising $\frac{1}{2}(\theta-4)^2 + |\theta|$, with stationary point $\theta_*$. In the different figures, we have $\lambda = 2.5, 25, 250$ (from top to bottom). The shown paths have all been determined analytically. For a deeper discussion of this problem, see Subsection~\ref{subsec:simple_exmpl}.}
    \label{fig:simple_example}
\end{figure}

\review{We finish this section by mentioning several related stochastic splitting methods. The idea of randomly choosing waiting times in between proximal steps appears in the  discrete-time method proposed by Mishchenko et al. \cite{pmlr-v162-mishchenko22b}. There, the proximal step is skipped with a high  probability leading to a random amount of time that is spent in the gradient flow part. The tresholding is still deterministic.  More traditional stochastic splitting methods have been discussed in several works, e.g., \cite{Atchade,Rosasco}. In these works, similar to Stochastic Gradient Descent \cite{RobbinsMonro}, only random approximations of  $\nabla\dPhi$ are available but not the gradient itself. The stochasticity here arises from those noisy gradients.}
 \section{Classification} \label{Sec_Class}
 Next, we move on to the classification problem. We introduce the problem, the subgradient flow, and its stochastic approximation. Moreover, we show well-posedness of the subgradient flow. 
 
 We consider an optimisation problem of the form 
 \begin{equation*} 
   \mathrm{argmin}_{\eta \in X} \tilde{\Phi}(\eta),
 \end{equation*}
 where 
 \begin{equation*}
     \tilde{\Phi}(\eta) := \frac{\alpha}{2}\|P_{X'}\eta- d \|_2^2 + \frac{\varepsilon}{2}\|\nabla' \eta\|_2^2 + \varepsilon^{-1}\sum_{i=1}^n W(\eta_i) \quad (\eta \in X)
 \end{equation*}
 and $\alpha, \varepsilon >0$, $X'$ is a subspace of $X$, $d \in X'$, $P_{X'}: X \rightarrow X'$ is the canonical projection on\review{to} $X'$, $\nabla'$ is some discrete gradient, and $W: \mathbb{R} \rightarrow \mathbb{R}$ is the non-smooth \emph{double-well potential}
 $$
 W(\eta) = \begin{cases} 
 - \eta - 1, &\text{ if } \eta \leq -1, \\
-\frac{\eta^2}{2} +\frac{1}{2}, &\text{ if } \eta \in (-1,1), \\
\eta-1, &\text{ if } \eta \geq 1,
\end{cases}
 $$
see also the top of Figure~\ref{fig:doubWell}, where we plot the double-well potential. 

Here,  $d \in X'$ represents a possibly noisy training data set which we intend to use to classify the remainder of the data in $\eta$. The double-well potential is responsible for leading the entries of $\eta$ to either $1$ or $-1$; indeed constructing a classification of the entries of $\eta$. The penalisation of the gradient implies an additional localisation by producing neighbouring components in $\eta$ that are (about) constant $1$ or $-1$. The meaning of `neighbouring' is determined by the choice of the gradient, see also Remark~\ref{rem:anylapla}. A similar double-well potential has been employed by \cite{BuddVG,BuddvGL} -- they actually used a related double-obstacle formulation. \review{This double-well potential is rather non-classical, we employ it as its subgradient flow is a thresholding operation that we can evaluate analytically; see Subsection~\ref{subsec_stochAppro_AC} below. } 
\review{The parameter} $\varepsilon > 0$ controls whether the process is closer to a diffusion $(\varepsilon \gg 0)$ or to a strict binary classification $(\varepsilon \approx 0)$. The part $\frac{\varepsilon}{2}\|\nabla' \eta\|_2^2 + \varepsilon^{-1}\sum_{i=1}^n W(\eta_i)$ in \eqref{eq:GL+Fid} is a \review{so-called} \emph{Ginzburg--Landau energy}\review{, see, e.g. \cite{Ting_2009}}.
 
 The gradient flow of a Ginzburg--Landau energy is an \emph{Allen--Cahn equation}.
In our discrete setting, we obtain the following \emph{discrete Allen--Cahn equation (with fidelity forcing)}:
 \begin{equation} \label{eq:GL+Fid}
     \frac{\dt \xi(t)}{\dt t} \in - \frac12 \PtP \xi(t) + \frac12\Ptd + \frac{\varepsilon}{2}\triangle' \xi(t) - \frac{1}{2\varepsilon}\partial W(\xi(t)) \qquad (t \geq 0), \qquad \qquad  \xi(0) = \xi^{(0)},
 \end{equation}
 where we abbreviate $\PtP := \alpha P_{X'}^TP_{X'}$, $\Ptd := \alpha P_{X'}^Td$, $\triangle' := - \nabla'^T\nabla'$ is a certain discrete Laplacian, and the scaling by $1/2$ is again a matter of convenience.   The sub\review{differential} of $W$ is given by
 $$
 \partial W(\eta') = \begin{cases} 
 \{-1 \}, &\text{ if } \eta' < -1, \\
 \{-\eta \}, &\text{ if } \eta' \in (-1,1), \\
 \{ 1 \}, &\text{ if } \eta' > 1, \\
 [-1 , 1],  &\text{ if } \eta' \in \{ -1, 1\}
 \end{cases}
 \qquad \qquad (\eta' \in X).
 $$
 Note that $W$ is not a convex function; \review{thus, here it is especially important that we compute the proximal subdifferential}, see \cite[Section 1]{Marcellin2006}. We show in Proposition~\ref{prop_AC_exist} below that this sub\review{differential} coincides with other usual sub\review{differentials} by proving that $W$ is \emph{primal lower nice}.
 We illustrate the sub\review{differential} in the bottom of Figure~\ref{fig:doubWell}.
 \begin{remark} \label{rem:anylapla}
Up to here and throughout the rest of this work, we do not specify properties of the discrete gradient $\nabla'$ and the discrete Laplacian $\triangle'$. In the following we only require  $\triangle'$ to be a symmetric, strictly negative definite matrix. It especially does need to be neither a localised operation, nor a discretised differential operator: fractional operators, integral operators, diffusions on graphs are allowed, as are different discretisation strategies.
 \end{remark}
 \begin{figure}
     \centering
     \includegraphics[scale = 0.7]{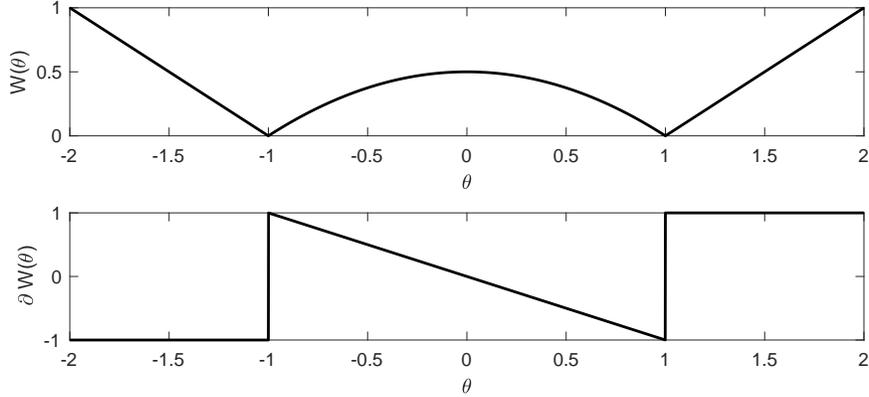}
     \caption{Double-well potential $W$ (top) and its sub\review{differential} $\partial W$ (bottom).}
     \label{fig:doubWell}
 \end{figure}
 
 \review{Before moving on to the analysis of the discrete Allen--Cahn equation, we briefly give more context: The Allen--Cahn equation has first been proposed to model the motion of an antiphase boundary \cite{AC_orig}. The connection to the Ginzburg--Landau energy has been discussed by \cite{Saner1997}. The numerical treatment of the Allen--Cahn equation has been discussed by, e.g., \cite{Feng,LI20101591}. Finally, the idea of using Allen--Cahn in classification and image segmentation has become popular in recent years; see, e.g., \cite{BENES2004,BuddVG,BuddvGL,Lee2022}.}
 
 \subsection{Well-posedness of the discrete Allen--Cahn equation.}
We now prove existence, uniqueness, and continuous dependency on the initial value for the solution to \eqref{eq:GL+Fid}. To the best of our knowledge, this has not been done for this particular dynamical system. \review{We proceed by proving that the double-well potential we employ is convex up to a quadratic functional.}

\begin{proposition} \label{prop_AC_exist}
The differential inclusion \eqref{eq:GL+Fid} has a unique solution that is locally Lipschitz continuous. Hence, the problem is well-posed.
\end{proposition}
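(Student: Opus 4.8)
The plan is to recognise \eqref{eq:GL+Fid} as one half of the subgradient flow of the full potential $\tilde{\Phi}$ and to reduce the analysis to a convex problem plus a smooth perturbation. Differentiating the two quadratic terms of $\tilde{\Phi}$ gives $\PtP\eta - \Ptd$ and $-\varepsilon\triangle'\eta$, and $\partial(\varepsilon^{-1}\sum_{i}W(\eta_i)) = \varepsilon^{-1}\partial W(\eta)$ componentwise, so the right-hand side of \eqref{eq:GL+Fid} is exactly $-\tfrac12\partial\tilde{\Phi}(\xi)$. The only obstruction to applying standard monotone-operator theory is that $W$, and hence $\tilde\Phi$, is non-convex. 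I would remove this obstruction by showing that $W$ is \emph{convex up to a quadratic}, which both delivers the primal lower niceness announced in the text and supplies the monotonicity estimate needed for uniqueness.

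Concretely, I would first set $\Psi := W + \tfrac12(\cdot)^2$ and verify that $\Psi$ is convex by inspecting its subgradient on the four regions: for $\eta<-1$ one has $\Psi'(\eta)=\eta-1$, on $(-1,1)$ one has $\Psi'(\eta)=-\eta+\eta=0$, for $\eta>1$ one has $\Psi'(\eta)=\eta+1$, while at the kinks $\partial\Psi(-1)=[-2,0]$ and $\partial\Psi(1)=[0,2]$. This map is nondecreasing, so $\Psi$ is convex and $\partial W(\eta)=\partial\Psi(\eta)-\eta$ with $\partial\Psi$ maximal monotone; in particular the proximal subdifferential of $W$ coincides with the convex one, and $W$ is primal lower nice because it differs from the convex function $\Psi$ by the $C^\infty$ map $-\tfrac12(\cdot)^2$ (primal lower niceness being stable under $C^2$ perturbations, cf. \cite{Marcellin2006}). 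Since the quadratic data and gradient terms of $\tilde\Phi$ are already smooth and convex, it follows that $\tilde\Phi + \tfrac{1}{2\varepsilon}\|\cdot\|_2^2$ is convex, i.e. $\tilde\Phi$ is semiconvex with constant $1/\varepsilon$, and $\tilde\Phi$ is primal lower nice.

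With $\tilde\Phi$ established as primal lower nice (and coercive and bounded below, which is immediate from the strict negative definiteness of $\triangle'$ together with the linear growth of $W$), existence of a unique, locally absolutely continuous, indeed locally Lipschitz solution follows from the evolution theory for such functions, namely Theorems 2.9 and 3.2 in \cite{Marcellin2006}, exactly as invoked for the sparse-inversion flow \eqref{EQ:full_subgrad}. For uniqueness together with continuous dependence on $\xi^{(0)}$ I would argue directly: if $\xi_1,\xi_2$ are two solutions with $\dot\xi_j=-\tfrac12 v_j$, $v_j\in\partial\tilde\Phi(\xi_j)$, then monotonicity of $\partial\tilde\Phi+\tfrac{1}{\varepsilon}\mathrm{Id}$ gives $\langle v_1-v_2,\xi_1-\xi_2\rangle\geq-\tfrac1\varepsilon\|\xi_1-\xi_2\|_2^2$, whence
\[
\frac{\dt}{\dt t}\tfrac12\|\xi_1(t)-\xi_2(t)\|_2^2=-\tfrac12\langle v_1-v_2,\xi_1-\xi_2\rangle\leq\tfrac{1}{2\varepsilon}\|\xi_1(t)-\xi_2(t)\|_2^2,
\]
so Gronwall's inequality yields $\|\xi_1(t)-\xi_2(t)\|_2\leq e^{t/(2\varepsilon)}\|\xi^{(0)}_1-\xi^{(0)}_2\|_2$. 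This simultaneously forces uniqueness and gives local Lipschitz dependence on the initial value, completing the proof of well-posedness.

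The step I expect to be the main obstacle is the rigorous verification that $\tilde\Phi$ is primal lower nice rather than merely semiconvex: one must confirm that the sum of the separable, coordinatewise double-well term with the smooth convex quadratics is itself primal lower nice and fits the precise hypotheses of \cite{Marcellin2006}. Should the primal-lower-nice route prove delicate, an equivalent argument is available by writing the flow as $\dot\xi+\tfrac{1}{2\varepsilon}\partial\Psi(\xi)\ni L\xi+\tfrac{1}{2\varepsilon}\xi+\tfrac12\Ptd$ with $L:=-\tfrac12\PtP+\tfrac{\varepsilon}{2}\triangle'$ and invoking the classical theory of a maximal monotone operator perturbed by a globally Lipschitz map.
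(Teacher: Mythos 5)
Your proposal is correct and takes essentially the same route as the paper: you write $W$ as a convex function minus a quadratic (your $\Psi = W + \tfrac12(\cdot)^2$ is exactly half of the paper's auxiliary function $g$, since $2W = g - (\cdot)^2$), conclude that $W$ is primal lower nice, and invoke Theorems 2.9 and 3.2 of \cite{Marcellin2006}. Your additional Gronwall/semiconvexity argument for uniqueness and continuous dependence on the initial value is a self-contained bonus the paper's proof does not spell out (and you are in fact more careful than the paper in noting that the \emph{full} potential $\tilde\Phi$, not just $W$, must be primal lower nice, which holds by stability under smooth perturbations), but the core mechanism is identical.
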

\begin{proof}
One way to show the existence and the uniqueness of the solution of \eqref{eq:GL+Fid} is by proving that $W: \mathbb{R} \rightarrow \mathbb{R}$ is primal lower nice. In this case, Theorems 2.9 and 3.2 in \cite{Marcellin2006} apply and prove the assertion of the proposition.

A function $f: \mathbb{R} \rightarrow \mathbb{R}$ is primal lower nice at \review{$u_0$}, if there are $s_0, c_0, Q_0 > 0$ such that for all $x \in B(u_0, s_0) := \{u : |u_0 - u| \leq s_0 \}$ and for all $q \geq Q_0$, $v \in \{ u \in  \partial f(x) : |u | \leq c_0q\}$, one has 
$$
f(y) \geq f(x) +  v (y-x) - \frac{q}{2}(y-x)^2.
$$
We first show that we can represent the function $W $ as the sum of a convex function and a negative square term. Then, we show that such combinations are always primal lower nice.
Indeed, we can write $2W(\eta) = g(\eta) - \eta^2,$
where  $$ g(\eta) = \begin{cases} 
 - 2\eta - 2 + \eta^2, &\text{ if } \eta \leq -1, \\
1, &\text{ if } \eta \in (-1,1), \\
2\eta-2 + \eta^2, &\text{ if } \eta \geq 1.\end{cases}
$$
One can easily see that $g$ is continuous and convex. We now show that $2W(\eta)$ is primal lower nice:
\begin{align*}
    g(y) - y^2 &\geq g(x) - x^2 + (v - 2x)(y-x) - \frac{q}{2}(y-x)^2 \\&= g(x) - x^2 + v (y-x) - 2xy + 2x^2 - \frac{q}{2}(y-x)^2 \\
    &= g(x) + v (y-x) + x^2  - 2xy  - \frac{q}{2}(y-x)^2
\end{align*}
which is equivalent to
$$
g(y) \geq g(x) + v(y-x)  + \left(1 - \frac{q}{2}\right)(y-x)^2,
$$
which holds for sufficient $s_0, c_0, Q_0$ as $g$ is convex.
\end{proof}

There are again multiple ways to discretise the differential inclusion \eqref{eq:GL+Fid} via explicit, semi-implicit, and fully implicit methods, see, e.g. \cite{BuddvGL,LI20101591}. Again, we make the observation that we can split the right-hand side of \eqref{eq:GL+Fid} into two parts that are relatively simple to solve separately. We discuss this and the implied stochastic approximation in the next subsection.

\subsection{The stochastic approximation} \label{subsec_stochAppro_AC}
The differential inclusion \eqref{eq:GL+Fid} contains a linear/smooth and a non-linear/non-smooth part. We denote the potentials of these two parts by $\Phi_{\lin}(\eta) := \frac{1}{2}\|P_{X'}\eta-d \|_2^2 + \frac{\varepsilon}{2}\|\nabla' \eta\|_2^2$ and $\Phi_{\dw}(\eta) := \varepsilon^{-1}\sum_{i=1}^n W(\eta_i)$. The corresponding dynamical systems are given by
\begin{align}
    \frac{\dt \xi^{\lin}(t)}{\dt t} &= \nabla\Phi_{\lin}(\xi^{\lin}(t)) =  - \PtP \xi^{\lin}(t) + \Ptd + \varepsilon\triangle' \xi^{\lin}(t) &(t\geq 0), \quad \qquad  \label{eq_AC_diff} \xi^{\lin}(0) = \xi^{\lin,0},\\
    \frac{\dt \xi^{\dw}(t)}{\dt t} &\in \partial\Phi_{\dw}(\xi^{\dw}(t)) = -\varepsilon^{-1}\partial W(\xi^{\dw}(t)) &(t\geq 0), \quad \qquad  \xi^{\dw}(0) = \xi^{\dw,0}. \label{eq_AC_thresh}
\end{align}
The linear part can, again, be solved analytically:
$$
\xi^{\lin}(t) = (\PtP - \varepsilon\triangle')^{-1} \Ptd + \exp(-t(\PtP-\varepsilon\triangle'))(\xi^{\lin,0} - (\PtP - \varepsilon\triangle')^{-1} \Ptd) \qquad (t \geq 0).
$$
The non-smooth part is a bit more involved, but we can find an analytical solution also in this case. We write the solution coordinate\review{-}wise. For $i \in  \{1,\ldots,n\}$, we have
$$
\xi^{\dw}_i(t) = \begin{cases} \min\{\xi^{\dw,0}_i + \varepsilon^{-1}t,  -1\}, &\text{ if } \xi^{\dw,0}_i < -1, \\
\max\{\varepsilon^{-1}\exp(t)\xi^{\dw,0}_i, -1 \}, &\text{ if } \xi^{\dw,0}_i \in [-1, 0), \\
0, &\text{ if } \xi^{\dw,0}_i = 0, \\ 
\min\{\varepsilon^{-1}\exp(t)\xi^{\dw,0}_i, 1 \}, &\text{ if } \xi^{\dw,0}_i \in (0,1], \\
\max\{\xi^{\dw,0}_i - \varepsilon^{-1}t,  1\}, &\text{ if } \xi^{\dw,0}_i > 1.
\end{cases}
$$
which is a unique solution of the differential inclusion. The uniqueness can be proven in the same way as we \review{show}  Proposition~\ref{prop_AC_exist}.

We obtain the \emph{stochastic approximation} $(\bsi(t), \eta(t))_{t \geq 0}$ of the discrete Allen--Cahn equation $(\zeta(t))_{t \geq 0}$ by alternating $(\xi^{\lin}(t))_{t \geq 0}$ and $(\xi^{\dw}(t))_{t \geq 0}$. We obtain:
\begin{equation} \label{EQ:stochastic_approximation_AC}
    \frac{\dt \eta(t)}{\dt t} \in  \begin{cases} \{- \nabla\lPhi(\eta(t)) \}, &\text{if } \bsi(t) = 0, \\
    - \partial \wPhi(\eta(t)), &\text{if } \bsi(t) = 1
    \end{cases} \qquad (t > 0), \qquad \eta(0) = \eta^{(0)},
\end{equation}
where $\eta^{(0)} \in X$ is an initial value. The stochastic approximation is, again, well-defined.
\begin{remark}
What we discuss in Remark~\ref{rem_thresholding} also holds true in this setting: we randomise the waiting times between two switches in $(\bsi(t))_{t \geq 0}$. Thus, the  subgradient flow with respect to the double-well potential $\wPhi$, again, leads to a randomised thresholding.
\end{remark}
A multitude of splitting schemes for  classification and segmentation tasks of this type have been proposed in the literature.
The idea of alternating diffusion and thresholding  steps, which we use here as well, is particularly iconic:  It is the underlying principle of the MBO scheme \cite{MBO}. There, this   idea has been used in the context of mean curvature flows. Its use in segmentation and classification has been considered by, e.g., \cite{BuddVG,BuddvGL,Cucuringu2021,Esedoglu06}. The connection that we make here between MBO and Allen--Cahn has been thoroughly discussed by \cite{BuddVG} in the setting of graph diffusions. Other splittings are also possible, see, for instance  \cite{Bertozzi-Flenner} and the discussion therein. \review{Finally, we note that through our stochastic approximation, we introduce a certain kind of noise into the Allen--Cahn equation. This introduced piecewise deterministic noise produces in some sense a stochastic Allen--Cahn equation. This stochastic Allen--Cahn equation, however, is rather different from the usual stochastic Allen--Cahn equation, see e.g., \cite{BERTACCO2021112122}, which is driven by white noise. }

\section{Feller processes and their generators} \label{Sec_Fell}
In the following, we discuss certain basic properties of the (sub)gradient flows $(\zeta(t))_{t \geq 0}$, $(\zeta^\dt(t))_{t \geq 0}$, $(\zeta^\rg(t))_{t \geq 0}$, $(\xi(t))_{t \geq 0}$, $(\xi^\lin(t))_{t \geq 0}$, $(\xi^\dw(t))_{t \geq 0}$, the process $(\bsi(t))_{t \geq t}$, and the stochastic approximations $(\theta(t))_{t \geq 0}$ and $(\eta(t))_{t \geq 0}$. We are particularly interested in showing that the processes are Feller and in finding their infinitesimal generators. These results and generators are vital to discuss longtime behaviour and convergence of the stochastic approximations.

We start with the definition of these terms. Then, we discuss the Feller property and the infinitesimal \review{generators} of the \review{(proper)} ODEs and $(\bsi(t))_{t \geq 0}$. Moreover, we explain how we obtain the infinitesimal generators for the subgradient processes. Ultimately, we are able to find the generators of the stochastic approximations $(\theta(t))_{t \geq 0}$ and $(\eta(t))_{t \geq 0}$ in Subsections~\ref{subsec_Fell_sparse} and \ref{subsec_Fell_class}\review{, respectively}.

\begin{definition} A homogeneous-in-time stochastic process $(x(t))_{t \geq 0}$ is \emph{Feller}, if its Markov transition kernel $P_t(\cdot | x') := \mathbb{P}(x(t) \in \cdot | x(0) = x')$ $(x' \in X, t \geq 0)$ maps from $C_0(X)$ into $C_0(X)$ \review{and} satisfies the following three assumptions:
\begin{itemize}
    \item[(i)] $\|P_t f\|_{\infty} \leq \|f\|_{\infty}$ $(f \in C_0(X), t \geq 0)$,
    \item[(ii)] $P_{t+s} = P_t \circ P_s$ $(s,t \geq 0)$, and
    \item[(iii)] $\lim_{t \rightarrow 0} \|P_t f - f \|_{\infty} = 0$ $(f \in C_0(X))$.
\end{itemize}
Moreover, we define the \emph{(infinitesimal) generator} of $(x(t))_{t \geq 0}$ by
$$
\mathcal{A}f := \lim_{t \rightarrow 0} \frac{P_t f - f}{t},
$$
which is well-defined for any Feller process and $f$ in an appropriate subspace of $C_0(X)$.
\end{definition}
In this definition, we denoted $C_0(X) := C_0^0(X)$ which is one of the spaces 
$$C_0^k(X) := \left \lbrace g : X \rightarrow \mathbb{R} : g \text{ is continuous and $k$-times continuously differentiable } \lim_{\|\theta\|\rightarrow \infty}|f(\theta)| = 0\right \rbrace,$$ for $k \in  \mathbb{N} \cup \{0\}$. Moreover, for some $f \in C_0(X)$, we define $P_tf := \int_X f(x) P_t(\mathrm{d}x|\cdot)$ \review{to be the expected value of $f(z)$, where $z \sim P_t$}.

For some of the processes mentioned above, it is easy to show the Feller property and derive the infinitesimal generator. Those are
\begin{enumerate}
    \item the ODEs $(\zeta^\dt(t))_{t \geq 0}$ and $(\xi^\lin(t))_{t \geq 0}$.  The generator of a well-defined ODE $\frac{\dt x}{\dt t} = \varphi(x)$ is given by
$$
\mathcal{A}_{\varphi}f = \langle \nabla f, \varphi \rangle
$$
for $f \in D := C^1_0(X) \subseteq C_0(X)$.
\item the continuous-time Markov process $(\bsi(t))_{t \geq 0}$. The Markov transition kernel of $(\bsi(t))_{t \geq 0}$ is given by
$$
\mathbb{P}(\bsi(t) = i \mid \bsi(0) = i_0) = \frac{1- \exp(-2\lambda t)}{2} + \exp(- 2\lambda  t)\mathbf{1}[i = i_0] =: I_t(\{ i \} \mid i_0) \qquad  (i, i_0 \in \{0, 1\}),
$$
see \cite[Lemma 5]{Latz2021} for details. The infinitesimal generator is the transition rate matrix that we give in Subsection~\ref{subsec_Problem_Setting}, it has domain $D = \mathbb{R}^{\{0,1\}} =  \mathbb{R}^2$.
\end{enumerate}
The more challenging cases are the (proper) subgradient flows $(\zeta(t))_{t \geq 0}$, $(\zeta^\rg(t))_{t \geq 0}$, $(\xi(t))_{t \geq 0}$, and $(\xi^\dw(t))_{t \geq 0}$. We know from the discussion in Sections~\ref{Sec_Sparse_Inv} and \ref{Sec_Class} that all of these paths are uniquely defined and locally Lipschitz continuous. This already implies that the processes are Feller. However, the paths are  not necessarily differentiable. Thus, also $t \mapsto P_tf$ is not differentiable for an \review{arbitrary} $f \in C^1_0(X)$ \review{that is not in the correct subspace of $C^1_0(X)$. Instead of determining this subspace and then taking the derivatives, we discuss below why it is sufficient to work with right semi-derivatives when discussing the generator}. 

\review{A function $g: [0, \infty) \rightarrow X$ is \emph{right semi-differentiable}, if the limit } 
$$
\lim_{h \downarrow 0}\frac{g(t+h)-g(h)}{h}  =: \frac{\dt g(t)}{\dt t^+}
$$
\review{exists for $t \geq 0$. The limit $\frac{\dt g(\cdot)}{\dt t^+}$ is the \emph{right semi-derivative} of $g$.}
Then, we indeed have that the infinitesimal operator $\mathcal{A}$ of a Markov transition operator $P_t$ is given by
$$\mathcal{A}f = \frac{\dt  P_tf}{\dt t^+},$$ 
\review{as in the existence proof of the infinitesimal generator, the right semi-derivatives coincide with the derivatives for test functions $f$ from the appropriate subspace of $C_0^1(X)$, see Theorem 17.6 in \cite{Kallenberg}.}
Let now $P_t$ be the Markov transition operator for some process $(x(t))_{t \geq 0}$ that is a \review{right semi-differentiable}, Lipschitz continuous, deterministic Feller process. Then, $P_t( \cdot |x') = \delta(\cdot - x(t; x'))$ for $x' \in X$. For $f \in C_0^1(X)$, we have
$$
\mathcal{A}f = \lim_{t \downarrow 0} \frac{P_t f - f}{t} = \frac{\dt}{\dt t^+}\Big\lvert_{t=0} f(x(t)) = \left\langle \nabla f, \frac{\dt x(t)}{\dt t^+}\Big\lvert_{t=0} \right\rangle.
$$
Thus, we obtain the infinitesimal generators for $(\zeta(t))_{t \geq 0}$, $(\zeta^\rg(t))_{t \geq 0}$, $(\xi(t))_{t \geq 0}$, and $(\xi^\dw(t))_{t \geq 0}$ \review{after showing right semi-differentiability and }by computing their right semi-derivatives with respect to $t$. We do this in the next two subsections, in which we also derive the infinitesimal generators for the stochastic approximations $(\theta(t))_{t \geq 0}$ and $(\eta(t))_{t \geq 0}$.

\subsection{Sparse inversion} \label{subsec_Fell_sparse}
We now discuss the right semi-derivatives of the processes $(\zeta(t))_{t \geq 0}$ and $(\zeta^\rg(t))_{t \geq 0}$.
\begin{proposition} \label{Prop:ODEs}
The functions $(\zeta(t))_{t \geq 0}$ and $(\zeta^\rg(t))_{t \geq 0}$ are right semi-differentiable and have the following semi-derivatives:
\begin{align*}
  \frac{\dt \zeta(t)}{\dt t^+} &:= -\frac12 \nabla  \Phi_\dt(\zeta(t)) + \frac12 G(\zeta(t)),  \\
  G_i(\zeta) &:=  \begin{cases} 1, &\text{ if } \zeta_i < 0, \\ 
   -1, &\text{ if } \zeta_i > 0, \\
   (\nabla\dPhi(\zeta))_i, &\text{ if } \zeta_i = 0, (-\nabla \dPhi(\zeta))_i \in [-1,1], \\
   -1, &\text{ if } \zeta_i = 0, (-\nabla \dPhi(\zeta))_i > 1, \\
   1, &\text{ if } \zeta_i = 0, (-\nabla \dPhi(\zeta))_i < -1
   \end{cases} \qquad \qquad (i \in \{1,\ldots,n\}),\\
   \frac{\dt \zeta^\rg(t)}{\dt t^+}  &:= - \mathrm{sgn}(\zeta^\rg(t)),
\end{align*}
for $t \geq 0$, respectively.
\end{proposition}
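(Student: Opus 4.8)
The plan is to treat the two processes separately, starting with the easier pure-regulariser subflow $(\zeta^\rg(t))_{t\ge 0}$ and then handling the full sparse-inversion flow $(\zeta(t))_{t\ge 0}$ via the theory of gradient flows of convex functions.

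For $(\zeta^\rg(t))_{t\ge0}$ I would simply differentiate the closed-form solution $\zeta^\rg(t)=\mathrm{sgn}(\zeta^\rg)\odot(|\zeta^\rg|-t)_+$ coordinate by coordinate. Each coordinate with positive (resp.\ negative) initial value moves towards $0$ at unit speed and is then pinned at $0$. Hence, for fixed $t$, a coordinate with $\zeta^\rg_i(t)>0$ has right derivative $-1$, one with $\zeta^\rg_i(t)<0$ has right derivative $+1$, and one with $\zeta^\rg_i(t)=0$ stays at $0$ for all later times and therefore has right derivative $0$. Since $\mathrm{sgn}(0)=0$, these three cases combine into $\tfrac{\dt\zeta^\rg(t)}{\dt t^+}=-\mathrm{sgn}(\zeta^\rg(t))$. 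The only subtlety is the instant at which a coordinate reaches $0$: there the left derivative is $\mp1$ while the right derivative is $0$, which is precisely why we claim right semi-differentiability rather than differentiability.

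For the full flow, observe that $\hPhi=\dPhi+\rPhi$ is convex (a convex quadratic plus the $\ell_1$-norm) and that $\partial\hPhi=\nabla\dPhi+\partial\rPhi$ by the sum rule, valid since $\dPhi$ is smooth. Rewriting $-\tfrac12\nabla\dPhi-\tfrac12\partial\rPhi=-\partial(\tfrac12\hPhi)$, the flow $(\zeta(t))_{t\ge0}$ is the gradient flow of the convex function $\tfrac12\hPhi$, whose subdifferential has full domain $D(\partial\hPhi)=X$. I would then invoke the classical minimal-section theorem for such flows (Brezis, cf.\ \cite{Brezis,Marcellin2006}): the right derivative $\tfrac{\dt\zeta(t)}{\dt t^+}$ exists for every $t\ge0$ and equals the element of $-\partial(\tfrac12\hPhi)(\zeta(t))$ of minimal Euclidean norm, i.e.\ $-\tfrac12$ times the projection of the origin onto the closed convex set $\partial\hPhi(\zeta(t))$. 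Existence of the right derivative everywhere is exactly what this theorem supplies for free, which is why I prefer it over a hands-on estimate.

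It then remains to identify the minimal-norm element with the claimed formula. Because $\partial\rPhi(\zeta)=\prod_i G(\zeta_i)$ is a product of intervals, adding the fixed vector $\nabla\dPhi(\zeta)$ and minimising $\|\nabla\dPhi(\zeta)+g\|_2^2=\sum_i((\nabla\dPhi(\zeta))_i+g_i)^2$ decouples across coordinates: for each $i$ one minimises $((\nabla\dPhi(\zeta))_i+g_i)^2$ over $g_i\in G(\zeta_i)$, so the optimal $g_i^\circ$ is the projection of $-(\nabla\dPhi(\zeta))_i$ onto $G(\zeta_i)$. For $\zeta_i\neq0$ this set is the singleton $\{\mathrm{sgn}(\zeta_i)\}$, giving $g_i^\circ=\mathrm{sgn}(\zeta_i)$; for $\zeta_i=0$ it is $[-1,1]$, and the projection of $-(\nabla\dPhi(\zeta))_i$ onto $[-1,1]$ yields the three sub-cases $-(\nabla\dPhi(\zeta))_i$, $1$, or $-1$ according to whether $-(\nabla\dPhi(\zeta))_i$ lies in $[-1,1]$, exceeds $1$, or falls below $-1$. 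Setting $G_i(\zeta):=-g_i^\circ$ and multiplying by $-\tfrac12$ reproduces exactly the right semi-derivative stated in the proposition. I expect the main obstacle to lie not in this bookkeeping but in justifying that the right derivative exists everywhere and equals the minimal section; equivalently, in a fully self-contained argument one would have to prove that an inactive coordinate with $-(\nabla\dPhi(\zeta))_i$ on the boundary $\pm1$ genuinely remains pinned to first order, and the convex gradient-flow theorem is what lets me sidestep this delicate point.
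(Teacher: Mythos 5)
Your proof is correct, but it takes a genuinely different route from the paper's. The paper proves the proposition by a hands-on, coordinate-wise case analysis: where $\zeta_i(t)\neq 0$ the flow locally satisfies a smooth linear ODE, so the derivative is read off directly; where $\zeta_i(t)=0$ and $(-\nabla\dPhi(\zeta(t)))_i\notin[-1,1]$ the coordinate instantly leaves zero and the right derivative follows by continuity of the ensuing smooth dynamics; and in the delicate pinned case $(-\nabla\dPhi(\zeta(t)))_i\in[-1,1]$ the paper argues by contradiction that a nonzero right derivative would force the coordinate into a regime whose dynamics immediately push it back, so the right derivative must vanish. You instead recognise $(\zeta(t))_{t\geq 0}$ as the gradient flow of the convex function $\tfrac12\hPhi$ (the sum rule being valid since $\dPhi$ is smooth, and the proximal subdifferential coinciding with the convex one here) and invoke the classical Brezis minimal-section theorem, reducing everything to a coordinate-wise projection computation, which you carry out correctly; your identification $G_i(\zeta)=-g_i^\circ$ matches the stated formula in all five cases, and your treatment of $(\zeta^\rg(t))_{t\geq 0}$ via the closed-form solution is the same as what the paper leaves implicit. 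What each approach buys: the paper's argument is elementary and self-contained, needing nothing beyond the explicit structure of the flow, but its pinning argument is exactly the fragile step you flagged; your route imports the maximal-monotone machinery (plus, strictly speaking, the observation that the paper's unique solution in the sense of Marcellin coincides with the Brezis semigroup solution, so that the minimal-section property applies to it), and in exchange gets existence of right derivatives everywhere and the pinned case for free. Both proofs are sound; yours generalises more readily, the paper's stays closer to first principles.
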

\begin{proof}
Without loss of generality, we do the following computations only for $(\zeta(t))_{t \geq 0}$. 
Let $i \in \{1,\ldots,n\}$ and $t \geq 0$.
If $\zeta_i(t) > 0$, we have 
$$
 2 \cdot \frac{\dt \zeta(t)_i}{\dt t} = -\AtA_i \zeta(t) + \Atb_i - 1
$$
and, similarly,
$$
 2 \cdot  \frac{\dt \zeta(t)_i}{\dt t} = -\AtA_i \zeta(t) + \Atb_i + 1
$$
in case $\zeta_i(t) < 0$, as $\zeta_i(t)$ is continuously differential in those cases and satisfies uniquely the differential inclusion. Let now $\zeta_i(t) = 0$. Moreover, let $-\AtA_i \zeta(t) + \Atb_i > 1$, then
$
\zeta_i(t+h)> 0
$
for all $h> 0$ sufficiently small and thus
 $ 2 \cdot \frac{\dt \zeta(t+h)_i}{\dt t} = -\AtA_i \zeta(t+h) + \Atb_i - 1$,
which is continuous in $h$. Thus,
 $$ 2 \cdot \frac{\dt \zeta(t)_i}{\dt t^+} = -\AtA_i \zeta(t) + \Atb_i - 1.$$
In the same way, if $-\AtA_i \zeta(t) + \Atb_i < -1$, we have
 $$ 2 \cdot \frac{\dt \zeta(t)_i}{\dt t^+} = -\AtA_i \zeta(t) + \Atb_i + 1.$$
Let now $-\AtA_i \zeta(t) + \Atb_i \in  [-1, 1]$. Then, 
$$ \frac{\dt \zeta(t)_i}{\dt t^+} = 0.$$
Assume it is not and $\frac{\dt \zeta(t)_i}{\dt t^+} > 0$. Then, for all $h>0$ sufficiently small, $\zeta_i(t+h)>0$ but  $\frac{\dt \zeta(t+h)_i}{\dt t^+} < 0$, which is a contradiction that can also been shown if $\frac{\dt \zeta(t)_i}{\dt t^+} < 0$. This concludes the proof.
\end{proof}

The semi-derivatives in Proposition \ref{Prop:ODEs} and the discussion in the beginning of Section~\ref{Sec_Fell} allow us to define the infinitesimal generators of $(\zeta(t))_{t \geq 0}$, $(\zeta^\dt(t))_{t \geq 0}$, and $(\zeta^\rg(t))_{t \geq 0}$. In this order, they  are given by
\begin{align*}
   \mathcal{A}_\zeta f= \left\langle \nabla f, -\nabla \frac12 \Phi_\dt + \frac12 G \right\rangle,  \quad 
   \mathcal{A}^\dt_\zeta f = \left\langle \nabla  f, -\nabla \Phi_\dt  \right\rangle,  \quad
    \mathcal{A}^\rg_\zeta f = \left\langle \nabla  f,- \mathrm{sgn}  \right\rangle,  \qquad  (f \in C^1_0(X)).
\end{align*}

Using the infinitesimal generators of $(\zeta^\dt(t))_{t \geq 0}$, and $(\zeta^\rg(t))_{t \geq 0}$, we can now discuss the stochastic approximation. Note that $(\bsi(t),\theta(t))_{t \geq 0}$ is a simple example for a piecewise-deterministic Markov process in the sense of Davis \cite{Davis1984};  see also \cite{Benaim2012_quant,Cloez2015}.  \review{Thus, the  construction of the generators can, for instance, be found similarly in those articles.} We summarise:
\begin{proposition}
$(\bsi(t),\theta(t))_{t \geq 0}$ is a Markov process and Feller with infinitesimal generator
\begin{align*}
      \mathcal{A}_\theta f(x,i) &= \begin{cases} 
      \langle \nabla_{x}f(x,0), -\nabla  \Phi_\dt(x) \rangle + \lambda(f(x,1)-f(x,0)) ,  &\text{ if } i = 0, \\
      \langle \nabla_{x}f(x,1),- \mathrm{sgn}(x)\rangle + \lambda(f(x,0)-f(x,1)) ,  &\text{ if } i = 1, 
      \end{cases}
\end{align*}
for $f(\cdot,0), f(\cdot, 1) \in C^1_0(X)$ and  $x \in X$.
\end{proposition}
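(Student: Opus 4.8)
The plan is to treat $(\bsi(t), \theta(t))_{t \geq 0}$ as a piecewise-deterministic Markov process in the sense of Davis \cite{Davis1984}, whose local characteristics are the two autonomous velocity fields $v_0 := -\nabla\dPhi$ and $v_1 := -\mathrm{sgn}(\cdot)$ on $X$, a constant jump rate $\lambda$, and the deterministic jump kernel that flips $i \mapsto 1-i$ and leaves $x$ unchanged. I would establish the three claims — the Markov property, the Feller property, and the explicit form of the generator — in this order, leaning on the Feller properties and generators of the building blocks $(\zeta^\dt(t))_{t\geq 0}$, $(\zeta^\rg(t))_{t\geq 0}$, and $(\bsi(t))_{t\geq 0}$ that are already recorded at the start of Section~\ref{Sec_Fell} and in Proposition~\ref{Prop:ODEs}.

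For the Markov property I would argue directly from the construction \eqref{EQ:stochastic_approximation_sparse_inv}. The switching process $(\bsi(t))_{t \geq 0}$ is Markov by assumption, and conditional on a realisation of $(\bsi(u))_{u \leq t}$ together with the value $\theta(s)$ at some $s \leq t$, the value $\theta(t)$ is a deterministic functional of $\theta(s)$ and $(\bsi(u))_{s \leq u \leq t}$, obtained by composing the flow maps of \eqref{eq:ODE_dt} and \eqref{eq:ODE_rg}. Since the exponential waiting times are memoryless, the future of the pair given the present $(\bsi(s), \theta(s))$ is independent of the past, which is exactly the Markov property for $(\bsi, \theta)$; time-homogeneity is inherited from the homogeneity of $(\bsi(t))_{t\geq 0}$ and the autonomy of the two flows.

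For the Feller property I would condition on the jump times of $(\bsi(t))_{t \geq 0}$. Given that jumps occur at $0 < T_1 < \cdots < T_k \leq t$ starting from state $i$, the map $x \mapsto \theta(t)$ is a finite composition of the flow maps of \eqref{eq:ODE_dt} and \eqref{eq:ODE_rg}, each continuous in its initial value (indeed locally Lipschitz, as recorded in Section~\ref{Sec_Sparse_Inv}), and the transition operators of these flows map $C_0(X)$ into $C_0(X)$. Writing $P_t f(x,i)$ as the expectation of $f(\theta(t), \bsi(t))$ and integrating this composition against the Poisson law of the jump times, dominated convergence yields continuity of $x \mapsto P_t f(x,i)$ and its decay as $\|x\| \to \infty$, so $P_t$ preserves $C_0(X \times \{0,1\})$. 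Property~(i) is automatic since $P_t$ is an average against a probability measure; the semigroup property~(ii) is Chapman--Kolmogorov, which follows from the Markov property; and strong continuity~(iii) follows because $\mathbb{P}(\text{a jump occurs in } [0,t]) = 1 - e^{-\lambda t} \to 0$ while both flows depend continuously on $t$ near $t = 0$.

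Finally I would compute the generator by a first-jump decomposition. Splitting on the number of jumps in $[0,t]$: with probability $e^{-\lambda t} = 1 - \lambda t + o(t)$ no jump occurs and $\theta$ follows the deterministic flow in state $i$; with probability $\lambda t + o(t)$ exactly one jump occurs, after which $\bsi(t) = 1-i$ and $\theta(t) = x + O(t)$; and two or more jumps contribute $O(t^2)$. For $f(\cdot,0), f(\cdot,1) \in C_0^1(X)$ this gives
\begin{equation*}
  P_t f(x,i) = f(x,i) + t \left\langle \nabla_x f(x,i),\, \frac{\dt \theta(t)}{\dt t^+}\Big\lvert_{t=0} \right\rangle + \lambda t\left(f(x,1-i) - f(x,i)\right) + o(t),
\end{equation*}
and dividing by $t$ and letting $t \downarrow 0$ produces the stated generator once we insert the velocity $-\nabla\dPhi(x)$ of the smooth flow \eqref{eq:ODE_dt} for $i=0$ and the right semi-derivative $-\mathrm{sgn}(x)$ for $i=1$ from Proposition~\ref{Prop:ODEs}. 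I expect the main subtlety to be the $i=1$ term: the thresholding flow is not differentiable, so the expansion above is only valid with a right semi-derivative in place of an ordinary derivative. This is exactly where the reduction to right semi-derivatives discussed at the start of Section~\ref{Sec_Fell} (via Theorem 17.6 in \cite{Kallenberg}) is needed, ensuring that $\mathcal{A}_\theta f = \frac{\dt}{\dt t^+} P_t f\big\lvert_{t=0}$ still determines the generator on $C_0^1(X)$. The remaining work is the routine control of the $o(t)$ remainders, which the general piecewise-deterministic framework of \cite{Davis1984,Benaim2012_quant,Cloez2015} also provides.
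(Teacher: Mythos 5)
Your proposal is correct and takes essentially the same route as the paper: the paper establishes this proposition precisely by observing that $(\bsi(t),\theta(t))_{t \geq 0}$ is a piecewise-deterministic Markov process in the sense of Davis \cite{Davis1984} (see also \cite{Benaim2012_quant,Cloez2015}) and deferring the construction of the generator to that framework, combined with the subflow generators and the right-semi-derivative discussion at the start of Section~\ref{Sec_Fell} and Proposition~\ref{Prop:ODEs}. Your first-jump decomposition and the careful handling of the non-smooth $i=1$ velocity via right semi-derivatives simply make explicit the standard PDMP argument that the paper's citation outsources.
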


\subsection{Classification} \label{subsec_Fell_class}
We now discuss the right semi-derivatives of the processes $(\xi(t))_{t \geq 0}$ and $(\xi^\dw(t))_{t \geq 0}$.
\begin{proposition} \label{Prop:ODEs_class}
The functions $(\xi(t))_{t \geq 0}$ and $(\xi^\dw(t))_{t \geq 0}$ are right semi-differentiable and have the following semi-derivatives:
\begin{align*}
  \frac{\dt \xi(t)}{\dt t^+} &:= -\frac12 \nabla  \lPhi(\xi(t)) + \frac12 G'(\xi(t)),  \\
   \frac{\dt \xi^\dw_i(t)}{\dt t^+}  &:= G_i''(\xi^\dw_i(t)), 
   \end{align*}
   for $t \geq 0$, where for $\xi \in X$ and  $i \in \{1,\ldots,n\}$, we have
   \begin{align*}
  G_i'(\xi) &:=  \begin{cases} \varepsilon^{-1}, &\text{ if } \xi_i < -1, \\ 
  \varepsilon^{-1}\xi_i, &\text{ if } \xi_i \in (-1,1),\\
   -\varepsilon^{-1}, &\text{ if } \xi_i > 1, \\
   (\nabla\lPhi(\xi))_i, &\text{ if } \xi_i \in \{-1,1\}, (-\nabla \lPhi(\xi))_i \in [-\varepsilon^{-1},\varepsilon^{-1}], \\
   -\varepsilon^{-1}, &\text{ if } \xi_i \in \{-1,1\}, (-\nabla \lPhi(\xi))_i > \varepsilon^{-1}, \\
   \varepsilon^{-1}, &\text{ if } \xi_i \in \{-1,1\}, (-\nabla \lPhi(\xi))_i < -\varepsilon^{-1},
   \end{cases}\\
   G_i''(\xi) &:= \begin{cases} 1, &\text{ if } \xi_i < -1, \\ 
 \xi_i, &\text{ if } \xi_i \in (-1,1),\\
   -1, &\text{ if } \xi_i > 1, \\
   0 &\text{ if } \xi_i \in \{-1,1\}.
   \end{cases}
\end{align*}
\end{proposition}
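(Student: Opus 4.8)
The plan is to follow the coordinatewise case analysis from the proof of Proposition~\ref{Prop:ODEs}, now applied to the two kink points $\xi_i\in\{-1,1\}$ of the double-well potential and its subdifferential $\partial W$. Fix a coordinate $i\in\{1,\dots,n\}$ and a time $t\ge 0$. On each of the three open regions $\{\xi_i<-1\}$, $\{\xi_i\in(-1,1)\}$, and $\{\xi_i>1\}$ the set-valued map $\partial W$ is single-valued and affine, so the governing inclusion reduces there to a locally smooth (indeed locally linear) ODE; the solution is consequently $C^1$ on that region and its right semi-derivative coincides with the classical derivative. For the full flow $(\xi(t))_{t\ge0}$ this yields $\tfrac12(-\nabla\lPhi(\xi))_i+\tfrac12 G_i'(\xi)$ with the three open-region values of $G_i'$, while for the pure threshold flow $(\xi^\dw(t))_{t\ge0}$, where no linear drift is present, I would simply differentiate from the right the closed-form solution obtained in Subsection~\ref{subsec_stochAppro_AC}: the $\min/\max$ caps are inactive in the interior of each region, which gives the stated open-region values of $G_i''$.

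The only delicate points are the kinks $\xi_i\in\{-1,1\}$, where $\partial W(\xi_i)=[-1,1]$ and the inclusion no longer pins down a single velocity. Here I would argue exactly as in Proposition~\ref{Prop:ODEs}, distinguishing cases according to the competing drift, namely the linear term $(-\nabla\lPhi(\xi))_i$ for the full flow $\xi$ and no drift at all for $\xi^\dw$. If this drift lies inside the admissible band (that is, $(-\nabla\lPhi(\xi))_i\in[-\varepsilon^{-1},\varepsilon^{-1}]$ for $\xi$, and trivially for $\xi^\dw$), then the subgradient can be chosen to balance it exactly and the coordinate must remain at the kink; a short contradiction argument, assuming a strictly positive (resp.\ negative) right semi-derivative and noting that for all sufficiently small $h>0$ the trajectory would then lie in an open region whose unique velocity carries the opposite sign, forces $\tfrac12(-\nabla\lPhi(\xi))_i+\tfrac12 G_i'=0$, i.e.\ $G_i'=(\nabla\lPhi(\xi))_i$, and $G_i''=0$, respectively. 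If instead the drift exceeds the band, say $(-\nabla\lPhi(\xi))_i>\varepsilon^{-1}$, the coordinate immediately enters the adjacent open region $\{\xi_i>1\}$, where $\partial W$ is fixed at $1$; since $t\mapsto(-\nabla\lPhi(\xi(t)))_i$ is continuous (the flow being locally Lipschitz by Proposition~\ref{prop_AC_exist}), the governing ODE on this branch depends continuously on $h$, and passing to the limit $h\downarrow0$ gives the open-region value $G_i'=-\varepsilon^{-1}$; the case $(-\nabla\lPhi(\xi))_i<-\varepsilon^{-1}$ is symmetric.

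I expect the main obstacle to be the rigorous treatment of this kink-crossing: one must verify that for all sufficiently small $h>0$ the trajectory is genuinely governed by a single-valued branch, so that continuity in $h$ of the resulting smooth ODE legitimately transfers to the right-hand limit, and one must rule out the spurious ``sticking with nonzero velocity'' scenario through the sign contradiction. Relative to Proposition~\ref{Prop:ODEs} the bookkeeping is heavier, because there are now two kinks and an additional interior region $(-1,1)$ on which the single-valued branch is itself nonconstant (the velocity $\varepsilon^{-1}\xi_i$ depends on the current value), but the logical structure is unchanged. Once right semi-differentiability and these values are established coordinatewise, the claimed vector identities for $\dt\xi/\dt t^+$ and $\dt\xi^\dw/\dt t^+$ follow immediately.
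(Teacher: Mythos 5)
Your proposal is correct and takes essentially the same route as the paper: the paper's proof of this proposition is literally a one-line reference to the argument of Proposition~\ref{Prop:ODEs}, and that argument — coordinatewise case analysis with smooth branches on the open regions, a sign-contradiction argument for sticking at a kink when the drift lies in the admissible band, and continuity in $h$ along the single-valued branch when the drift exceeds it — is exactly what you carry out. One small imprecision worth fixing: at the kink $\xi_i=-1$ with drift above the band the trajectory enters $(-1,1)$ rather than $\{\xi_i>1\}$, but since there $\partial W(\eta)=\{-\eta\}\rightarrow\{1\}$ as $\eta\downarrow -1$, your continuity-in-$h$ argument still yields the stated value $G_i'=-\varepsilon^{-1}$.
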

\begin{proof}
The proof runs along the same lines as that of Proposition~\ref{Prop:ODEs}. 
\end{proof}
Given these semi-derivative and the discussion about ODEs in the beginning of Section~\ref{Sec_Fell}, we know that the infinitesimal generators of $(\xi(t))_{t \geq 0}$, $(\xi^\lin(t))_{t \geq 0}$, $(\xi^\dw(t))_{t \geq 0}$ are given by
\begin{align*}
   \mathcal{A}_\xi f = \left\langle \nabla f, -\nabla \frac12 \lPhi + \frac12 G'\right\rangle, \quad  \mathcal{A}^\lin_\xi f = \left\langle \nabla  f, -\nabla  \lPhi  \right\rangle, \quad 
    \mathcal{A}^\dw_\xi f = \left\langle \nabla  f,G_i''  \right\rangle \qquad (f \in C^1_0(X)).
\end{align*}
Again, we can follow \cite{Davis1984} to show that $(\bsi(t), \eta(t))_{t \geq 0}$ is Feller and to obtain its infinitesimal generator:
\begin{proposition}
$(\bsi(t),\eta(t))_{t \geq 0}$ is a Markov process and Feller with infinitesimal generator
\begin{align*}
      \mathcal{A}_\eta f(x,i) &= \begin{cases} 
      \langle \nabla_{x}f(x,0), -\nabla  \lPhi(x) \rangle + \lambda(f(x,1)-f(x,0)) ,  &\text{ if } i = 0, \\
      \langle \nabla_{x}f(x,1),G''(x)\rangle + \lambda(f(x,0)-f(x,1)) ,  &\text{ if } i = 1, 
      \end{cases}
\end{align*}
for $f(\cdot,0), f(\cdot, 1) \in C^1_0(X)$ and  $x \in X$.
\end{proposition}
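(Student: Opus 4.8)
The plan is to recognise $(\bsi(t), \eta(t))_{t \geq 0}$ as a piecewise-deterministic Markov process in the sense of \cite{Davis1984} and to read off its generator from a first-jump decomposition, with the one caveat that the deterministic flow in state $i=1$ is only right semi-differentiable. The process has two ingredients: the discrete mode $\bsi(t) \in \{0,1\}$, which jumps between its two states at the constant rate $\lambda$, and the continuous component $\eta(t)$, which between jumps follows the linear flow \eqref{eq_AC_diff} when $\bsi = 0$ and the double-well subgradient flow \eqref{eq_AC_thresh} when $\bsi = 1$. Since no boundary or forced jumps occur and the switching rate is constant, this is exactly the constant-rate process whose generator is known to split into a transport part (the generator of the active flow) and a jump part $\lambda(f(x, 1-i) - f(x,i))$, exactly as for the sparse-inversion process $(\bsi(t),\theta(t))_{t \geq 0}$.

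First I would establish the Feller property. Both flows $(\xi^\lin(t))_{t\geq 0}$ and $(\xi^\dw(t))_{t \geq 0}$ are locally Lipschitz continuous and depend continuously on their initial value (Section~\ref{Sec_Class}), and the modulating process $(\bsi(t))_{t \geq 0}$ is itself Feller with the bounded constant rate $\lambda$. Conditioning $\eta(t)$ on the almost surely finitely many switching times of $\bsi$ on $[0,t]$ exhibits $P_t f$ as a finite expectation of compositions of continuous maps, from which conditions (i)--(iii) of the Feller definition follow by dominated convergence.

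To obtain the generator, I would decompose $P_t f(x,i)$ according to whether $\bsi$ has jumped before time $t$. With probability $\exp(-\lambda t)$ there is no jump, so $\bsi(t) = i$ and $\eta(t)$ is the deterministic flow $\Phi^i_t(x)$ of mode $i$ started at $x$; with probability $1 - \exp(-\lambda t) = \lambda t + o(t)$ at least one jump has occurred, contributing $\lambda t\, f(x, 1-i) + o(t)$ to first order since $\eta(t) = x + O(t)$. Hence
\begin{equation*}
P_t f(x,i) = \exp(-\lambda t)\, f(\Phi^i_t(x), i) + \lambda t\, f(x, 1-i) + o(t).
\end{equation*}
Dividing by $t$, letting $t \downarrow 0$, and using $\frac{\dt}{\dt t^+}\big\lvert_{t=0} f(\Phi^i_t(x),i) = \langle \nabla_x f(x,i), \frac{\dt \Phi^i_t(x)}{\dt t^+}\big\lvert_{t=0}\rangle$ together with the right semi-derivatives $-\nabla\lPhi(x)$ for $i=0$ and $G''(x)$ for $i=1$ from Proposition~\ref{Prop:ODEs_class}, the expansion $\exp(-\lambda t) = 1 - \lambda t + o(t)$ produces the $-\lambda f(x,i)$ term, and one arrives at the claimed form of $\mathcal{A}_\eta$.

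The hard part will be that the mode-$1$ flow solves a differential inclusion and its driving field $G''$ is discontinuous at $x_i = \pm 1$, so the classical theory for piecewise-deterministic Markov processes — which assumes a Lipschitz vector field generating the flow — does not apply verbatim; the non-smoothness lives in the deterministic dynamics rather than in the jumps. This is precisely the point addressed at the beginning of Section~\ref{Sec_Fell}: for a locally Lipschitz, right semi-differentiable deterministic Feller flow the generator is computed with the right semi-derivative $\frac{\dt}{\dt t^+}$, which by Theorem 17.6 in \cite{Kallenberg} agrees with the genuine generator on the appropriate subspace of $C^1_0(X)$. Having recorded the right semi-derivative of $(\xi^\dw(t))_{t\geq 0}$ in Proposition~\ref{Prop:ODEs_class}, the computation above goes through unchanged, so no new analytic difficulty remains beyond invoking these two ingredients.
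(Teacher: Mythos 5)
Your proposal is correct and follows essentially the same route as the paper: the paper simply invokes Davis's piecewise-deterministic Markov process framework \cite{Davis1984} together with the right semi-derivatives of Proposition~\ref{Prop:ODEs_class} and the discussion of semi-differentiability at the start of Section~\ref{Sec_Fell}, which is exactly what you do, with the first-jump decomposition making explicit what the paper leaves as a citation.
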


\section{Longtime behaviour} \label{sec_longtime}
We now study the longtime behaviour \review{and convergence} of the subgradient flows $(\zeta(t))_{t \geq 0}$, $(\xi(t))_{t \geq 0}$ and their stochastic approximations $(\theta(t))_{t \geq 0}$, $(\eta(t))_{t \geq 0}$.  Indeed, we see that the subgradient flows converge to a point. In general, the stochastic approximations do not converge to a single point. Instead, we study their convergence to potentially existing stationary probability distributions.

The longtime behaviour of subgradient flows of this form has been studied by, e.g., \cite{BRUCK1975,Marcellin2006}. From them, we obtain the following result:
\begin{proposition} \label{prop_det_conv}
\begin{itemize}
    \item[(i)] The sparse inversion flow $(\zeta(t))_{t \geq 0}$ converges to the stationary point of $\hPhi$, i.e. $\zeta(t) \rightarrow \theta_*$, as $t \rightarrow \infty$.
    \item[(ii)] The discrete Allen\review{--}Cahn equation $(\xi(t))_{t \geq 0}$ converges to the minimal point of its trajectory, i.e. $\tilde{\Phi}(\xi(t)) \rightarrow \inf_{T > 0}\tilde{\Phi}(\xi(T))$, as $t \rightarrow \infty$.
\end{itemize}

\end{proposition}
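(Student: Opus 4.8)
The plan is to treat the two statements according to their structural difference: part~(i) concerns a \emph{convex} energy, so I would prove convergence to the unique minimiser by a Lyapunov argument, whereas part~(ii) concerns the \emph{non-convex} Ginzburg--Landau energy, where I can only hope for convergence of the energy \emph{along} the trajectory, which I would obtain from monotone energy dissipation.

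For part~(i), first note that $\hPhi = \dPhi + \rPhi$ is strongly convex: since $\mathrm{rank}(A) = n$, the matrix $\AtA = A^TA$ is positive definite, so $\dPhi$ is $\mu$-strongly convex with $\mu := \lambda_{\min}(\AtA) > 0$, and adding the convex $\rPhi$ preserves this. Hence $\hPhi$ has a unique minimiser $\theta_*$, characterised by $0 \in \partial\hPhi(\theta_*)$, and the flow~\eqref{EQ:full_subgrad} reads $\dot\zeta \in -\tfrac12\partial\hPhi(\zeta)$. I would introduce the Lyapunov function $V(t) := \tfrac12\|\zeta(t) - \theta_*\|_2^2$. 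Since $(\zeta(t))_{t\ge 0}$ is locally Lipschitz, $V$ is absolutely continuous and, at almost every $t$, $\dot V(t) = \langle \zeta(t) - \theta_*, \dot\zeta(t)\rangle = -\tfrac12\langle \zeta(t) - \theta_*, g(t)\rangle$ for a measurable selection $g(t) \in \partial\hPhi(\zeta(t))$. Strong monotonicity of $\partial\hPhi$ together with $0 \in \partial\hPhi(\theta_*)$ then gives $\langle \zeta(t) - \theta_*, g(t)\rangle \geq \mu\|\zeta(t) - \theta_*\|_2^2 = 2\mu V(t)$, so $\dot V \leq -\mu V$. Gr\"onwall's inequality yields $V(t) \leq V(0)\exp(-\mu t) \to 0$, hence $\zeta(t) \to \theta_*$ (in fact exponentially fast). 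Alternatively, one may simply invoke Bruck's theorem \cite{BRUCK1975} for subgradient flows of convex functions and use that weak and strong convergence coincide in $X = \mathbb{R}^n$, with uniqueness of the limit supplied by strong convexity.

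For part~(ii), recall that $\tilde{\Phi} \geq 0$ (each of its three summands is non-negative; in particular $W \geq 0$), so the energy is bounded below, and the flow~\eqref{eq:GL+Fid} is the subgradient flow $\dot\xi \in -\tfrac12\partial\tilde{\Phi}(\xi)$. The essential step is to show that $t \mapsto \tilde{\Phi}(\xi(t))$ is non-increasing. Because $(\xi(t))_{t\ge 0}$ is locally Lipschitz and $\tilde{\Phi}$ is primal lower nice (established, for its only non-smooth ingredient $W$, in Proposition~\ref{prop_AC_exist}), the chain rule for primal lower nice potentials along absolutely continuous curves (see \cite{Marcellin2006}) applies: $t \mapsto \tilde{\Phi}(\xi(t))$ is absolutely continuous and, for almost every $t$, $\frac{\dt}{\dt t}\tilde{\Phi}(\xi(t)) = \langle v(t), \dot\xi(t)\rangle$ with $v(t) \in \partial\tilde{\Phi}(\xi(t))$ the subgradient realising $\dot\xi(t) = -\tfrac12 v(t)$. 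This gives $\frac{\dt}{\dt t}\tilde{\Phi}(\xi(t)) = -\tfrac12\|v(t)\|_2^2 \leq 0$ almost everywhere, so the energy is non-increasing. A non-increasing function converges to its infimum, whence $\tilde{\Phi}(\xi(t)) \to \inf_{T>0}\tilde{\Phi}(\xi(T))$, which is exactly the claim.

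I expect the main obstacle to sit in part~(ii): unlike the convex case, there is no monotonicity to pin down a limiting point, and the energy-dissipation identity must be justified for a \emph{non-convex} potential. The delicate issue is the chain rule at the non-smooth ``kinks'' $\xi_i \in \{-1,1\}$, where $\partial W$ is set-valued and the semi-derivative from Proposition~\ref{Prop:ODEs_class} vanishes; this is precisely where primal lower niceness (Proposition~\ref{prop_AC_exist}) is indispensable, as it guarantees that the composition $\tilde{\Phi}\circ\xi$ is absolutely continuous and differentiates through the subgradient as above. For part~(i), the analogous chain-rule and measurable-selection step is routine for convex $\hPhi$ via maximal monotone operator theory \cite{Brezis}, so the only substantive work there is the Lyapunov estimate, which is elementary once strong convexity is in hand.
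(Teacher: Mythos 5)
Your proposal is correct, and for part (i) it takes a genuinely different route from the paper. The paper disposes of (i) in one sentence by citing Theorem 4 of Bruck \cite{BRUCK1975}, the classical convergence result for subgradient flows of convex functions; you instead exploit the strong convexity supplied by $\mathrm{rank}(A)=n$ (so that $\AtA$ is positive definite) and run a Lyapunov--Gr\"onwall argument on $V(t)=\tfrac12\|\zeta(t)-\theta_*\|_2^2$, using strong monotonicity of $\partial\hPhi$. This is more elementary and buys strictly more than the paper's citation: an explicit exponential rate of convergence, $V(t)\le V(0)\exp(-\lambda_{\min}(\AtA)\,t)$, rather than bare convergence. (The alternative you mention at the end of part (i) is exactly the paper's proof.) For part (ii), your argument is essentially the paper's, only unpacked: the paper simply invokes Proposition 4.1 of \cite{Marcellin2006}, whose hypotheses are that $\tilde{\Phi}$ is bounded below and primal lower nice (the latter via Proposition~\ref{prop_AC_exist}); you verify the same two hypotheses ($W\ge 0$ gives boundedness below) and then re-derive what sits behind that citation, namely the chain rule for primal lower nice potentials along the locally Lipschitz trajectory, yielding $\frac{\dt}{\dt t}\tilde{\Phi}(\xi(t))=-\tfrac12\|v(t)\|_2^2\le 0$ almost everywhere and hence monotone decay of the energy to its trajectory infimum --- while still deferring the chain rule itself to \cite{Marcellin2006}. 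So (ii) is the same approach with the dissipation mechanism made explicit, which is a useful clarification but not an independent proof; (i) is a genuinely different, quantitative argument made possible by the full-rank assumption that the paper's more general citation does not need.
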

\begin{proof}
(i) is a classical result by Bruck \cite{BRUCK1975}, where it is stated in Theorem 4. For (ii), we employ Proposition 4.1 from \cite{Marcellin2006}, which holds as $\tilde{\Phi}$ is bounded below and primal lower nice, see Proposition~\ref{prop_AC_exist}.
\end{proof}
In the sparse inversion problem, we converge to the unique minimiser of $\hPhi$. In the classification problem, this is not so clear. Our proposition only states that the discrete Allen\review{--}Cahn equation to which we apply the target function finds the infimum of the target function over all visited states. We should note that, in general, $\tilde \Phi$ has no unique minimiser. Consider, for instance, the case, where $X' = X$, $d = 0$, and $\PtP - \varepsilon\triangle' = \frac{1}{n}\mathrm{Id}_X$. Then, the vectors $(-1,\ldots,-1)^T$ and $(1,\ldots,1)^T$ are both minimisers of $\tilde{\Phi}$. We should also note that the dynamical system $(\zeta(t))_{t \geq 0}$ does not need to get close to a minimiser. In the aforementioned setting, $(0,\ldots,0)^T$ is a stationary point of the dynamical system. A process starting from there would not converge to a minimiser of the target function.

We now move on to the stochastic approximations. As mentioned before, we usually cannot expect convergence to a single stationary point, We discuss the convergence of the stochastic approximations in terms of the Wasserstein distance. It is defined as follows: Let $\pi, \pi' \in \mathrm{Prob}(X)$ be two probability measures on $(X, \mathcal{B}X)$ and $\mathrm{Coup}(\pi, \pi') \subseteq \mathrm{Prob}(X \times X)$ be the set of couplings of $\pi, \pi'$. Let $p \in (0, 1)$. Then, the \emph{Wasserstein distance}  of $\pi, \pi'$ is defined as $\mathrm{W}_p(\pi, \pi')$, where
$$
\mathrm{W}_p(\pi, \pi') := \inf_{\Gamma \in \mathrm{Coup}(\pi, \pi')} \int_{X \times X} \min\{1, \|\zeta - \zeta'\|^p_2\} \Gamma(\mathrm{d}\zeta, \mathrm{d}\zeta').
$$
Convergence in this Wasserstein distance is equivalent to weak convergence of the probability measures, see, e.g. \cite{Villani}.
We start with the stochastic approximation of the sparse inversion flow in the next section and then discuss the discrete Allen--Cahn equation after that.

\subsection{Sparse inversion} \label{subsec_lontime_SP}
To understand the asymptotic behaviour of $(\theta(t))_{t \geq 0}$, we need to have a look at the behaviour of the subflows $(\zeta^\dt(t))_{t \geq 0}$ and $(\zeta^\rg(t))_{t \geq 0}$.
Both $(\zeta^\dt(t))_{t \geq 0}$ and $(\zeta^\rg(t))_{t \geq 0}$ converge to a stationary point. Indeed, $(\zeta^\rg(t))_{t \geq 0}$ converges in finite time, $(\zeta^\dt(t))_{t \geq 0}$ converges exponentially. Moreover, we show boundedness of $(\theta(t))_{t \geq 0}$.
\begin{lemma} \label{lemma_simple_conver} Let $\zeta^{(0)}, \zeta^{(1)} \in X$ be initial values and let $\sigma_{\min}$ be the minimal singular value of $A$. Then, 
\begin{enumerate}
    \item[(i)] $
\|\zeta^\rg(t) \|_2 \rightarrow 0$, as $t \rightarrow \infty$, 
indeed, $\zeta^\rg(t) = 0$, if $t \geq \max_{i=1}^n |\zeta^{(0)}_i|$. Also, $\|\zeta^{\rg}(t,\zeta^{(0)}) - \zeta^{\rg}(t,\zeta^{(1)})  \|_2 \leq \|\zeta^{(0)} - \zeta^{(1)}  \|_2$.
\item[(ii)] $\|\zeta^{\dt}(t,\zeta^{(0)}) - \zeta^{\dt}(t,\zeta^{(1)})  \|_2 \leq \exp(-\sigma_{\min}^2t) \|\zeta^{(0)} - \zeta^{(1)}  \|_2 \qquad (t \geq 0)$,
\item[(iii)] The process $(\theta(t))_{t \geq 0}$ is bounded whenever the initial value $\theta(0) = \theta^{(0)} \in X$ is bounded.
\end{enumerate}
\end{lemma}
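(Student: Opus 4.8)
The plan is to handle the three claims separately, in each case leaning on the closed-form solutions of the two subflows derived in Section~\ref{Sec_Sparse_Inv}. Parts (i) and (ii) are essentially direct readings of those formulas, while the real work is part (iii): the difficulty is that the two subflows are non-expansive toward \emph{different} centres -- the linear flow $\zeta^\dt$ contracts toward $\AtA^{-1}\Atb$, whereas the thresholding flow $\zeta^\rg$ contracts toward $0$ -- so alternating them does not visibly keep $\theta$ bounded. I would resolve this by exhibiting a single ball that is forward-invariant under both subflows.

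For (i), I would start from the coordinatewise soft-thresholding solution $\zeta^\rg(t,\zeta^{(0)})_i = \mathrm{sgn}(\zeta^{(0)}_i)\,(|\zeta^{(0)}_i|-t)_+$ and observe that the $i$-th coordinate vanishes as soon as $t \geq |\zeta^{(0)}_i|$; taking the maximum over $i$ gives $\zeta^\rg(t) = 0$ for $t \geq \max_i|\zeta^{(0)}_i|$, and in particular $\|\zeta^\rg(t)\|_2 \to 0$. For the non-expansiveness I would note that the scalar soft-threshold $s \mapsto \mathrm{sgn}(s)(|s|-t)_+$ is $1$-Lipschitz (its slope is either $0$ or $1$), so that $|\zeta^\rg(t,\zeta^{(0)})_i - \zeta^\rg(t,\zeta^{(1)})_i| \leq |\zeta^{(0)}_i - \zeta^{(1)}_i|$ for each $i$; squaring and summing yields the claimed $\ell_2$-contraction.

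For (ii), I would subtract the two closed-form solutions of the linear flow. The affine term $\AtA^{-1}\Atb$ cancels, leaving $\zeta^\dt(t,\zeta^{(0)}) - \zeta^\dt(t,\zeta^{(1)}) = \exp(-t\AtA)(\zeta^{(0)} - \zeta^{(1)})$. Since $\mathrm{rank}(A) = n$, the matrix $\AtA = A^TA$ is symmetric positive definite with eigenvalues equal to the squared singular values of $A$, the smallest being $\sigma_{\min}^2$; hence $\exp(-t\AtA)$ is symmetric with spectral norm $\exp(-t\sigma_{\min}^2)$, and submultiplicativity of the operator norm gives the bound.

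For (iii), set $R := \|\Atb\|_2/\sigma_{\min}^2$. I would first compute, along the linear subflow,
\[
\frac{\dt}{\dt t}\|\zeta^\dt\|_2^2 = -2(\zeta^\dt)^T\AtA\,\zeta^\dt + 2(\zeta^\dt)^T\Atb \leq 2\|\zeta^\dt\|_2\bigl(\|\Atb\|_2 - \sigma_{\min}^2\|\zeta^\dt\|_2\bigr),
\]
which is $\leq 0$ precisely when $\|\zeta^\dt\|_2 \geq R$; thus the linear flow never increases the norm outside $\bar{B}(0,R)$ and leaves $\bar{B}(0,R)$ invariant. By part (i) the thresholding flow is norm non-increasing everywhere, so it also leaves $\bar{B}(0,R)$ invariant. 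Since on every switching interval $\theta$ follows one of the two subflows, an induction over the (almost surely finitely many on any bounded interval) switching times shows $\|\theta(t)\|_2$ can never exceed $\max\{\|\theta^{(0)}\|_2, R\}$: inside the ball it stays inside, and outside it is non-increasing until it enters. This yields a deterministic bound, independent of the random switching path, and hence boundedness of $(\theta(t))_{t\geq0}$.
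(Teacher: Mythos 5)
Your proof is correct, and parts (i) and (ii) take essentially the paper's route: (i) is read off the coordinatewise soft-thresholding solution (the paper simply says it ``follows immediately from the definition''), and (ii) is the identical computation, cancelling the affine term and bounding $\|\exp(-t\AtA)\|_2 = \exp(-\sigma_{\min}^2 t)$ via the spectrum of the symmetric positive definite matrix $\AtA$. Part (iii) is where you genuinely diverge. The paper bounds each subflow separately in terms of its own initial value -- the linear flow via the reversed triangle inequality around its fixed point $\AtA^{-1}\Atb$, the thresholding flow via $\|\zeta^{\rg}(t,\zeta^{(0)})\|_2 \leq \|\zeta^{(0)}\|_2$ -- and then directly asserts that the switched process is bounded. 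As you observed, that conclusion is not immediate from those two estimates alone: the subflows are non-expansive toward \emph{different} centres, so naively concatenating the paper's bounds allows the norm to grow by up to $2\|\AtA^{-1}\Atb\|_2$ on every linear segment, and closing this would require exploiting the exponential factor together with the law of the random waiting times. Your invariant-ball argument -- the differential inequality $\frac{\dt}{\dt t}\|\zeta^\dt\|_2^2 \leq 2\|\zeta^\dt\|_2\bigl(\|\Atb\|_2 - \sigma_{\min}^2\|\zeta^\dt\|_2\bigr)$ showing that $\bar{B}(0,R)$ with $R = \|\Atb\|_2/\sigma_{\min}^2$ is forward-invariant for the linear flow, combined with the everywhere norm non-increase of the thresholding flow -- avoids this issue entirely: the function $\max\{\|\cdot\|_2, R\}$ is non-increasing along both subflows, hence along every realisation of $(\theta(t))_{t \geq 0}$, yielding the explicit pathwise bound $\|\theta(t)\|_2 \leq \max\{\|\theta^{(0)}\|_2, R\}$ surely (not merely almost surely) and, importantly, uniformly in the switching rate $\lambda$. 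That uniformity is exactly what the later tightness argument (Lemma~\ref{lemma_tight}) relies on, so your route buys a more self-contained and quantitatively explicit conclusion at the cost of one short extra computation.
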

\begin{proof}
(i) follows immediately from the definition of the subgradient flow. (ii) is given by the following simple calculation:
\begin{align*}
    \|\zeta^{\dt}(t,\zeta^{(0)}) - \zeta^{\dt}(t,\zeta^{(1)})  \|_2 &= \| \exp(-t\AtA)(\zeta^{(0)}-\zeta^{(1)}) \|_2 \\ &\leq \| \exp(-t\AtA)\|_2 \|\zeta^{(0)}-\zeta^{(1)} \|_2 = \exp(-\sigma_{\min}^2t)\|\zeta^{(0)}-\zeta^{(1)} \|_2.
\end{align*}
(iii) We can show boundedness for the two subprocesses in terms of the respective initial value:
\begin{align*}
 \|\zeta^{\dt}(t,\zeta^{(0)}) \|_2-\|\AtA^{-1}\Atb\|_2 &\leq \|\zeta^{\dt}(t,\zeta^{(0)}) - \AtA^{-1}\Atb\|_2 \\ &\leq \exp(-\sigma_{\min}^2t)\| \zeta^{(0)} - \AtA^{-1}\Atb\|_2  
\end{align*}
using the reversed triangular inequality.
Moreover,
$\|\zeta^{\rg}(t,\zeta^{(0)})  \|_2 \leq \|\zeta^{(0)}  \|_2$.
Thus, the process $(\theta(t))_{t \geq 0}$ is bounded as well.
\end{proof}

With this result, we now have all the ingredients to study the asymptotic behaviour of $(\theta(t))_{t \geq 0}$.

\begin{theorem} \label{theo_SP_app_longtime}
The stochastic approximation $(\theta(t))_{t \geq 0}$ has a unique stationary measure $\pi_* \in \mathrm{Prob}(X)$, for any initial value $\theta(0) = \theta^{(0)} \in X$ and any $p \in (0,1)$, there are constants $c, c' > 0$ such that
$$
\mathrm{W}_p(\pi_*,\mathbb{P}(\theta(t)\in \cdot)) \leq c \exp(-c't) \qquad (t \geq 0).
$$
\end{theorem}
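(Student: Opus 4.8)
The plan is to combine a synchronous coupling of two copies of the process with the decay estimates of Lemma~\ref{lemma_simple_conver} and a spectral estimate for the occupation time of the switching process $(\bsi(t))_{t\geq 0}$. Given two initial values $\theta^{(0)},\tilde\theta^{(0)}\in X$, I would run two copies $(\theta(t))_{t\geq 0}$ and $(\tilde\theta(t))_{t\geq 0}$ of \eqref{EQ:stochastic_approximation_sparse_inv} driven by the \emph{same} realisation of $(\bsi(t))_{t\geq 0}$. Then both copies switch at the same times and, on every maximal interval on which $\bsi\equiv 1$, both evolve under the subgradient flow $(\zeta^\rg(t))_{t\geq 0}$, which is non-expansive by Lemma~\ref{lemma_simple_conver}(i); on every maximal interval on which $\bsi\equiv 0$, both evolve under $(\zeta^\dt(t))_{t\geq 0}$, which contracts distances by the factor $\exp(-\sigma_{\min}^2\cdot(\text{interval length}))$ by Lemma~\ref{lemma_simple_conver}(ii). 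Composing these Lipschitz maps across $[0,t]$ and writing $S_0(t):=\int_0^t \mathbf{1}[\bsi(s)=0]\,\dt s$ for the time spent in state $0$, I obtain the pathwise bound
\[
\|\theta(t)-\tilde\theta(t)\|_2 \leq \exp\!\big(-\sigma_{\min}^2\, S_0(t)\big)\,\|\theta^{(0)}-\tilde\theta^{(0)}\|_2 \qquad (t\geq 0).
\]

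Second --- and this is the technical heart --- I would control the exponential moment of the occupation time. For $a>0$ set $v_{i_0}(t):=\mathbb{E}[\exp(-a S_0(t))\mid \bsi(0)=i_0]$. By a Feynman--Kac argument for the two-state chain, $v=(v_0,v_1)^T$ solves the linear ODE $v'(t)=(Q-aD)v(t)$ with $v(0)=(1,1)^T$, where $Q$ is the transition rate matrix from Subsection~\ref{subsec_Problem_Setting} and $D=\mathrm{diag}(1,0)$. The matrix $Q-aD$ has trace $-2\lambda-a<0$ and determinant $a\lambda>0$, so both its eigenvalues are real and strictly negative; denoting the leading one by $-\gamma_a<0$, this yields $\mathbb{E}[\exp(-a S_0(t))]\leq C_a\exp(-\gamma_a t)$ for constants $C_a,\gamma_a>0$. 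Intuitively, the switching process spends asymptotically half of its time in state $0$, so the occupation time grows linearly and the phase-$0$ contraction compounds at an exponential rate.

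Third, I would assemble the ergodicity statement by a contraction/fixed-point argument in $(\mathrm{Prob}(X),\mathrm{W}_p)$. Since the synchronous coupling is an admissible coupling and $\min\{1,y\}\leq y$, the two steps above with $a=p\sigma_{\min}^2$ give, for fixed initial values,
\[
\mathrm{W}_p\big(\mathbb{P}(\theta(t)\in\cdot),\mathbb{P}(\tilde\theta(t)\in\cdot)\big)\leq \mathbb{E}\big[\|\theta(t)-\tilde\theta(t)\|_2^p\big]\leq C\exp(-\gamma t)\,\|\theta^{(0)}-\tilde\theta^{(0)}\|_2^p.
\]
Using the Markov semigroup property, the same estimate applied to the laws $\mathbb{P}(\theta(t)\in\cdot)$ and $\mathbb{P}(\theta(t+r)\in\cdot)$, together with the uniform boundedness from Lemma~\ref{lemma_simple_conver}(iii) (which keeps the relevant $p$-th moments finite, uniformly in $r$), shows that $(\mathbb{P}(\theta(t)\in\cdot))_{t\geq 0}$ is $\mathrm{W}_p$-Cauchy; by completeness it converges to a limit $\pi_*$, which is stationary by passing to the limit in the semigroup relation. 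Uniqueness follows because any two stationary measures $\pi_*,\pi_*'$ --- both necessarily supported in the bounded attracting region of the flow --- satisfy $\mathrm{W}_p(\pi_*,\pi_*')=\mathrm{W}_p(\pi_*P_t,\pi_*'P_t)\leq C\exp(-\gamma t)\int\|\zeta-\zeta'\|_2^p\,\Gamma(\dt\zeta,\dt\zeta')\to 0$, and the claimed rate is obtained by starting the second copy from $\pi_*$.

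The main obstacle I anticipate is making the exponential occupation-time bound and the resulting moment bookkeeping fully rigorous: I must ensure that $\pi_*$ has finite $p$-th moments (equivalently, is supported in the bounded attracting region furnished by the explicit subflows, with $\zeta^\dt$ pulled towards $\AtA^{-1}\Atb$ and $\zeta^\rg$ towards $0$, together with Lemma~\ref{lemma_simple_conver}(iii)) so that the integrals above are finite, and I must handle the joint process $(\bsi(t),\theta(t))_{t\geq 0}$ carefully, reading off the stated $\pi_*\in\mathrm{Prob}(X)$ as the $\theta$-marginal of the joint stationary law (the $\bsi$-component mixes exponentially on its own). An alternative I would cross-check against is to verify the average-contractivity hypotheses of the quantitative ergodicity results for switching processes in \cite{Benaim2012_quant,Cloez2015} --- here the average rate $\tfrac12(-\sigma_{\min}^2)+\tfrac12\cdot 0<0$ is exactly the relevant condition --- and invoke those theorems directly.
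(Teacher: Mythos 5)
Your proposal is correct, but it follows a genuinely different route from the paper. The paper's proof is essentially a citation: it invokes Theorem 1.4 of \cite{Cloez2015} (exponential ergodicity for Markov processes with random switching), whose average-contraction hypothesis is supplied precisely by Lemma~\ref{lemma_simple_conver} (non-expansiveness of the thresholding flow, strict contraction of the data-fidelity flow at rate $\sigma_{\min}^2$, boundedness) together with the Feller property established in Section~\ref{Sec_Fell}; uniqueness is then obtained from the Wasserstein contraction and the Banach fixed point theorem. You instead reprove the relevant special case of that theorem from scratch: synchronous coupling, the pathwise bound $\exp(-\sigma_{\min}^2 S_0(t))$ in terms of the occupation time of the contracting state, a Feynman--Kac eigenvalue computation for $\mathbb{E}[\exp(-aS_0(t))]$ (your trace/determinant check of $Q-aD$ is right, and symmetry gives real eigenvalues), and a Cauchy/fixed-point argument in $(\mathrm{Prob}(X),\mathrm{W}_p)$ --- which is in fact the proof strategy underlying \cite{Benaim2012_quant,Cloez2015}, so the ``alternative'' you mention at the end coincides with what the paper actually does. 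What your route buys: explicit constants (the rate is an eigenvalue of a $2\times 2$ matrix with $a=p\sigma_{\min}^2$) and transparency about why mere non-expansiveness of the $\ell_1$-flow suffices, since only the time spent in state $0$ drives the contraction. What the citation buys: brevity, and a statement that transfers verbatim to the classification case (Theorem~\ref{theo_AC_app_longtime}), where one subflow genuinely expands at rate $\varepsilon^{-1}$ and your occupation-time matrix would have to be redone with a sign-indefinite diagonal perturbation under the hypothesis $\mu_{\min}>\varepsilon^{-1}$. One technicality you gesture at but should nail down: synchronous coupling requires the two copies to share the switching signal, hence the same initial $\bsi$-state; in your Cauchy-in-time argument the second copy starts from the law of $\bsi(r)$, which differs from that of $\bsi(0)$, so you must first couple the two-state chains (they meet at an exponentially distributed time) and only then synchronise, absorbing the pre-meeting discrepancy via the boundedness in Lemma~\ref{lemma_simple_conver}(iii). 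This is standard and does not change the exponential rate, but it is needed for the argument to be airtight.
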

\begin{proof}
The convergence follows from Theorem 1.4 in \cite{Cloez2015}, which requires the results of Lemma~\ref{lemma_simple_conver} and the stochastic approximation to be Feller. Uniqueness follows from $(\theta(t))_{t \geq 0}$  contracting in the Wasserstein distance itself and the Banach Fixed Point Theorem.
\end{proof}
Hence, we see that the algorithm converges exponentially to a stationary measure. We refer to this stationary measure as the \emph{implicit regularisation}. It determines how the stochasticity in the algorithm regularises the optimisation problem and is especially important in non-convex optimisation, see, e.g., \cite{smith2021on}. Moreover, the probability distribution is sometimes used for uncertainty quantification, see, \cite{Mandt2017}.

It is noteworthy in this case, that the probability distribution $\pi_*$ is often not a single point -- in contrast to the discrete forward-backward splitting given in Algorithm~\ref{algo_fwbwS} with sufficiently small step sizes. Thus, either the randomisation of the waiting times prohibits the contraction to the minimiser of \eqref{Eq:Optprob} or the contraction in the algorithm is a numerical artefact (which seems more likely).

It is fairly vital for the proof that the operator $A$ is of full rank. While this is true for many sparse regression problems, it is clearly not true for all usual inverse problems (like inpainting or sparse dictionary learning). Moreover,  we would usually assume that the regulariser improves the convergence behaviour. One way to solve this problem is $\|\cdot\|_2^2 + \|\cdot\|_1$ regularisation, which is not untypical in practice, see, e.g., \cite{Stadler2007}.

\review{Finally, we note that the constant $c$ depends on the distance of the process' initial value $(\theta^{(0)},i(0))$ to the stationary measure $\pi_* \otimes \mathrm{Unif}\{0,1\}.$ Moreover, the constant $c'$ depends on $\lambda$ and $\sigma_{\min}^2$. More details are given in \cite{Benaim2012_quant,Cloez2015}.}

\subsection{Classification}
As in the sparse inversion case, we first discuss the asymptotic properties of the subflows $(\xi^{\lin}(t))_{t \geq 0}$ and $(\xi^{\dw}(t))_{t \geq 0}$, as well as the boundedness of the stochastic approximation $(\eta(t))_{t \geq 0}$.

\begin{lemma} \label{lemma_simple_conver_AC}
Let $\xi^{(0)}, \xi^{(1)} \in X$ be initial values and let $\mu_{\min} > 0$ be the minimal eigenvalue of $\PtP - \varepsilon\triangle'$. Then, for $t \geq 0$, we have
\begin{itemize}
    \item[(i)] $\|\xi^{\dw}(t, \xi^{(0)})-\xi^{\dw}(t, \xi^{(1)})\|_2 \leq \exp(\varepsilon^{-1}t)\|\xi^{(0)}-\xi^{(1)}\|_2$,
    \item[(ii) ]there is a time $t' \geq 0$ such that for all $t \geq t'$, we have $\|\xi^{\dw}(t, \xi^{(0)})-\xi^{\dw}(t, \xi^{(1)})\|_2 \leq 2 \sqrt{n}$,
       \item[(iii)] $\|\xi^{\dt}(t,\xi^{(0)}) - \xi^{\dt}(t,\xi^{(1)})  \|_2 \leq \exp(-\mu_{\min}t) \|\xi^{(0)} - \xi^{(1)}  \|_2$,
    \item[(iv)] The stochastic approximation $(\eta(t))_{t \geq 0}$ is bounded. 
\end{itemize}
\end{lemma}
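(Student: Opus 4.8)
The plan is to dispatch (iii), (i), (ii) first, since the estimates they produce feed into (iv), which is where the real work lies. Throughout I write $p := (\PtP - \varepsilon\triangle')^{-1}\Ptd$ for the unique fixed point of the linear subflow and note that $\PtP - \varepsilon\triangle'$ is symmetric positive definite (as $\PtP$ is positive semidefinite and $-\varepsilon\triangle'$ is positive definite by Remark~\ref{rem:anylapla}), with smallest eigenvalue $\mu_{\min} > 0$. Claim (iii) is then immediate by mirroring the proof of Lemma~\ref{lemma_simple_conver}(ii): the closed-form solution gives $\xi^{\dt}(t,\xi^{(0)}) - \xi^{\dt}(t,\xi^{(1)}) = \exp(-t(\PtP-\varepsilon\triangle'))(\xi^{(0)}-\xi^{(1)})$, and since $\PtP-\varepsilon\triangle'$ is symmetric positive definite its matrix exponential has operator norm $\exp(-\mu_{\min}t)$, yielding the bound.

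For (i) I would avoid the case distinctions of the explicit coordinatewise formula and instead exploit semiconvexity of $W$. The proof of Proposition~\ref{prop_AC_exist} already shows $2W = g - (\cdot)^2$ with $g$ convex, hence $\eta \mapsto W(\eta) + \tfrac12\eta^2$ is convex and $\partial W$ is $(-1)$-monotone: $(w_x - w_y)(x-y) \ge -(x-y)^2$ for all $w_x \in \partial W(x)$, $w_y \in \partial W(y)$. Writing $u = \xi^{\dw}(\cdot,\xi^{(0)})$, $v = \xi^{\dw}(\cdot,\xi^{(1)})$, which are locally Lipschitz and solve $\dot u = -\varepsilon^{-1}w_u$ coordinatewise for a.e.\ selections $w_u \in \partial W(u)$, the map $t \mapsto \tfrac12\|u(t)-v(t)\|_2^2$ is absolutely continuous and satisfies, almost everywhere,
\[
\frac{\dt}{\dt t}\tfrac12\|u-v\|_2^2 = -\varepsilon^{-1}\langle w_u - w_v,\, u - v\rangle \le \varepsilon^{-1}\|u-v\|_2^2 .
\]
Grönwall's inequality then gives exactly $\|u(t)-v(t)\|_2 \le \exp(\varepsilon^{-1}t)\|\xi^{(0)}-\xi^{(1)}\|_2$.

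For (ii) the key observation is the finite-time extinction visible in the explicit solution: for each coordinate with $\xi^{(0)}_i \neq 0$ the flow reaches $\mathrm{sgn}(\xi^{(0)}_i) \in \{-1,1\}$ in a finite time (namely $\varepsilon(|\xi^{(0)}_i|-1)$ if $|\xi^{(0)}_i| > 1$ and $\varepsilon\ln(1/|\xi^{(0)}_i|)$ if $0<|\xi^{(0)}_i|<1$), while a coordinate that equals $0$ stays at $0$. Taking $t'$ to be the maximum of these finitely many reaching times over the nonzero coordinates of both $\xi^{(0)}$ and $\xi^{(1)}$, for every $t \ge t'$ both $\xi^{\dw}(t,\xi^{(0)})$ and $\xi^{\dw}(t,\xi^{(1)})$ lie in $\{-1,0,1\}^n$; hence each coordinate of their difference has absolute value at most $2$, and $\|\xi^{\dw}(t,\xi^{(0)})-\xi^{\dw}(t,\xi^{(1)})\|_2 \le 2\sqrt{n}$.

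The main obstacle is (iv), because the two subflows are non-expansive toward \emph{different} targets — the linear flow contracts toward $p$ in the Euclidean metric, whereas the double-well flow drives each coordinate toward $\{-1,1\}$ — so a naive per-cycle estimate only yields additive growth. I would combine two ingredients. First, from (iii), the linear flow satisfies $\|\xi^{\lin}(t,x)-p\|_2 \le \exp(-\mu_{\min}t)\|x-p\|_2$ and so never increases the distance to $p$. Second, since $\partial W \subseteq [-1,1]$ everywhere, the double-well flow has speed at most $\varepsilon^{-1}$ and, inspecting the explicit solution coordinatewise, obeys $|\xi^{\dw}_i(t,x) - p_i| \le \max\{|x_i - p_i|,\, |p_i|+1\}$; consequently the box $\mathcal{B} := \prod_i [\,p_i-(|p_i|+1),\,p_i+(|p_i|+1)\,] \supseteq [-1,1]^n$ is invariant under the double-well flow, which is moreover coordinatewise non-expansive toward $p$ outside $\mathcal{B}$. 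Combining the strict Euclidean contraction of the linear part with the fact that the double-well part can push the trajectory away from $p$ only by the bounded amount controlled by $\mathcal{B}$ (and only while a coordinate lies in $[-1,1]$), one concludes that $\|\eta(t)-p\|_2$ cannot escape to infinity: the trajectory stays in a bounded region determined by $\|\eta^{(0)}-p\|_2$ and the data through $\mathcal{B}$. The cleanest way to make this quantitative is a drift estimate for the Lyapunov function $x \mapsto \|x-p\|_2$, exactly in the spirit of the Wasserstein contraction framework of \cite{Cloez2015} invoked later; I expect that verifying the double-well contribution to this drift is uniformly bounded is the only genuinely delicate point.
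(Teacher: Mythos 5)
Your parts (i)--(iii) are correct. For (ii) and (iii) you argue exactly as the paper does (finite extinction times into $\{-1,0,1\}^n$, and the matrix-exponential bound mirroring Lemma~\ref{lemma_simple_conver}(ii)). For (i) you take a genuinely different route: the paper reads the bound off the explicit coordinatewise solution (the flow expands only inside $(-1,1)$, where it behaves like $\dot x = \varepsilon^{-1}x$), whereas you use the semiconvexity $2W = g - (\cdot)^2$ from Proposition~\ref{prop_AC_exist} to get one-sided monotonicity of $\partial W$ and then Gr\"onwall. Your version needs no case distinctions and would survive replacing $W$ by any semiconvex potential; it is a clean improvement.

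The genuine gap is in (iv), and to your credit you located it correctly -- the paper itself disposes of (iv) with the single sentence ``(iv) is implied by (ii) and (iii)''. But your closing step (``combining the strict Euclidean contraction \dots one concludes that $\|\eta(t)-p\|_2$ cannot escape to infinity'') is not a proof, and the obstruction is real. The linear flow is non-expansive for $\|\cdot - p\|_2$, while the double-well flow is non-expansive only for coordinatewise/box-type quantities, and the linear flow does not preserve boxes: it can rotate coordinates back under the thresholds $|p_i|+1$, after which the next double-well phase can push them out again. So the box bound, although idempotent under the double-well flow alone, does not propagate through cycles. If instead you track $\|\eta - p\|_2$ across cycles, the best available estimate is of the form $V_{k+1}^2 \leq e^{-2\mu_{\min}\delta_k}\,(V_k^2 + C)$ with $\delta_k \sim \mathrm{Exp}(\lambda)$ i.i.d.; this upper-bound recursion is a perpetuity whose supremum over infinitely many cycles is almost surely infinite, so per-cycle bookkeeping in either geometry cannot establish (iv). Your fallback -- a Foster--Lyapunov drift in the spirit of \cite{Cloez2015} -- would also fall short of the mark: it yields moment bounds or boundedness in probability, whereas what the paper uses later (the equicontinuity/tightness argument of Lemma~\ref{lemma_tight}, and the contraction setting of Theorem~\ref{theo_AC_app_longtime}) is a deterministic, realization-independent bound on the paths.

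The missing idea is a \emph{common} invariant set in the geometry that the linear flow does preserve, namely large Euclidean balls centred at $p$. Using the right semi-derivative $G''$ of the double-well flow from Proposition~\ref{Prop:ODEs_class}: coordinates with $|\xi_i|>1$ contribute $-|\xi_i| + \mathrm{sgn}(\xi_i)p_i \leq -|\xi_i|+|p_i|$ to $\langle G''(\xi), \xi - p\rangle$, coordinates with $|\xi_i|\leq 1$ contribute at most $1+|p_i|$, so that (up to the positive factor $\varepsilon^{-1}$)
\begin{equation*}
\frac{\dt}{\dt t^+}\,\tfrac12\|\xi^{\dw}(t)-p\|_2^2 \;\leq\; -\|\xi^{\dw}(t)\|_1 + 2n + \|p\|_1 \;\leq\; -\|\xi^{\dw}(t)-p\|_2 + \|p\|_2 + \|p\|_1 + 2n .
\end{equation*}
Hence every ball $\{x : \|x-p\|_2 \leq r\}$ with $r > \|p\|_2 + \|p\|_1 + 2n$ is invariant for the double-well flow, and by (iii) it is invariant for the linear flow as well; consequently it is invariant for the switched process, whichever realization of $(\bsi(t))_{t\geq 0}$ occurs. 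This gives the pathwise bound $\|\eta(t)-p\|_2 \leq \max\{\|\eta^{(0)}-p\|_2,\ \|p\|_2+\|p\|_1+2n\}$ for all $t \geq 0$, which is exactly what (iv) asserts and what the rest of the paper needs.
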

\begin{proof}
The dynamical system $(\xi^{\dw}(t))_{t \geq 0}$ in (i) behaves differently for initial values in different areas of $X$. As the process has the same convergence behaviour in each component, we assume without loss of generality that $X = \mathbb{R}$. In the intervals $(-\infty, -1)$ and $(1, \infty)$, the process contracts toward $-1$ and $1$, respectively. ${-1, 0, 1}$ are stationary points. Within the intervals $(-1,0)$ and $(0,1)$ the process expands and converges toward $-1$ and $1$ respectively. Where the process expands, it expands like the ODE $\frac{\mathrm{d}x}{\mathrm{d}t} = \varepsilon^{-1}x$, by definition. Thus, we obtain the bound in (i). (ii) follows from the fact that the components of $(\xi^{\dw}(t))_{t \geq 0}$ converge to $\{-1, 0, 1\}$ in finite time.
The proof of (iii) is identical to Lemma~\ref{lemma_simple_conver}(ii). (iv) is implied by (ii) and (iii).
\end{proof}
Note that we show in (i) not a contraction of the dynamical system, but rather  a bound on its expansion. In (ii), we show additionally, that after some finite time this expansion is bounded by a constant that only depends on the dimension of $X$. Given these statements, we can show the following theorem. The proof is identical to that of Theorem~\ref{theo_SP_app_longtime}.

\begin{theorem} \label{theo_AC_app_longtime} Let $\mu_{\min} > \varepsilon^{-1}.$
Then, the stochastic approximation $(\eta(t))_{t \geq 0}$ has a unique stationary measure $\pi_\dagger \in \mathrm{Prob}(X)$, for any initial value $\eta(0) = \eta^{(0)} \in X$ and any $p \in (0,1)$, there are constants $c, c' > 0$ such that
$$
\mathrm{W}_p(\pi_\dagger,\mathbb{P}(\eta(t)\in \cdot)) \leq c \exp(-c't) \qquad (t \geq 0).
$$
\end{theorem}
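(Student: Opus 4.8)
The plan is to follow exactly the route used for Theorem~\ref{theo_SP_app_longtime}: invoke the exponential ergodicity result for Markov processes with random switching (Theorem~1.4 in \cite{Cloez2015}) to obtain an exponential Wasserstein contraction, and then deduce existence and uniqueness of the stationary measure from a Banach fixed point argument. The three ingredients that this result demands are already available: the process $(\bsi(t), \eta(t))_{t \geq 0}$ is a Feller piecewise-deterministic Markov process (the Proposition in Subsection~\ref{subsec_Fell_class}), it is bounded (Lemma~\ref{lemma_simple_conver_AC}(iv)), and the two subflows obey the Lipschitz/contraction estimates of Lemma~\ref{lemma_simple_conver_AC}(i)--(iii).

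The essential difference from the sparse-inversion setting — and the reason the hypothesis $\mu_{\min} > \varepsilon^{-1}$ is needed — is that here only one of the two subflows is contractive. I would work with the synchronous coupling in which two copies of the process, started at $\eta^{(0)}$ and $\eta^{(1)}$, are driven by a single common switching signal $(\bsi(t))_{t \geq 0}$. On an interval where $\bsi \equiv 0$ the distance between the copies contracts at rate $\mu_{\min}$ by Lemma~\ref{lemma_simple_conver_AC}(iii), whereas on an interval where $\bsi \equiv 1$ the double-well subflow may \emph{expand} the distance, but at rate no faster than $\varepsilon^{-1}$ by Lemma~\ref{lemma_simple_conver_AC}(i). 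Since the switching is symmetric with rate $\lambda$ in both directions, the invariant law of $(\bsi(t))_{t \geq 0}$ is uniform on $\{0,1\}$ and the process spends, in expectation, equal fractions of time in each state. The expected logarithmic rate of change of the distance is therefore $\tfrac{1}{2}(-\mu_{\min}) + \tfrac{1}{2}(\varepsilon^{-1}) = \tfrac{1}{2}(\varepsilon^{-1} - \mu_{\min})$, which is strictly negative — i.e. the coupled distance contracts on average — precisely when $\mu_{\min} > \varepsilon^{-1}$.

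Under this net-contraction condition, Theorem~1.4 in \cite{Cloez2015} supplies constants $c, c' > 0$ and, for every pair of initial laws, an exponential Wasserstein contraction of the transition semigroup. In particular the map $\pi \mapsto \mathbb{P}(\eta(t) \in \cdot \mid \eta(0) \sim \pi)$ becomes, for $t$ large, a strict contraction on the complete metric space $(\mathrm{Prob}(X), \mathrm{W}_p)$, so the Banach Fixed Point Theorem yields a unique stationary measure $\pi_\dagger$ together with the stated exponential convergence $\mathrm{W}_p(\pi_\dagger, \mathbb{P}(\eta(t) \in \cdot)) \leq c\exp(-c't)$.

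I expect the main obstacle to be exactly the expanding subflow. Unlike the sparse-inversion case, where both subflows were non-expansive and one could argue pathwise, here there is no pathwise contraction, so the estimate must be carried out in expectation and must control the possibility that the random time accumulated in the expanding ($\bsi = 1$) regime outweighs the contraction gathered in the linear ($\bsi = 0$) regime. Verifying that this cannot happen with non-negligible probability is precisely the content that the averaging hypotheses of \cite{Cloez2015} are designed to handle, and it is the assumption $\mu_{\min} > \varepsilon^{-1}$ that guarantees the strictly positive average contraction rate those hypotheses require.
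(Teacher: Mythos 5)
Your proposal is correct and takes essentially the same route as the paper: the paper's proof simply declares itself identical to that of Theorem~\ref{theo_SP_app_longtime}, i.e.\ it invokes Theorem 1.4 of \cite{Cloez2015} together with the estimates of Lemma~\ref{lemma_simple_conver_AC} and the Feller property, and obtains uniqueness via the Banach Fixed Point Theorem. Your explicit accounting of how $\mu_{\min} > \varepsilon^{-1}$ produces the strictly negative average contraction rate $\tfrac{1}{2}(\varepsilon^{-1}-\mu_{\min})$ demanded by the switching-average hypothesis of \cite{Cloez2015} is precisely the role that assumption plays, and is in fact more detailed than the paper's own one-line proof.
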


As for the discrete Allen--Cahn equation $(\xi(t))_{t \geq 0}$, convergence to a unique stationary measure is not automatic for the stochastic approximation $(\eta(t))_{t \geq 0}$. It actually depends on the choice of the discrete Laplacian and $\varepsilon$. Note that $\PtP$ is usually not of full rank, which is why the smallest eigenvalue of $-\triangle'$ is likely a good approximation to $\mu_{\min}/\varepsilon$. In case $\varepsilon \approx 0$, we cannot expect the inequality in Theorem~\ref{theo_AC_app_longtime} to hold.

\section{Approximation properties} \label{sec_perturbed}
In this last theoretical section of this work, we study the approximation properties of the stochastic approximations. In particular, we are looking for result\review{s} of type $$(\theta(t))_{t \geq 0} \Rightarrow (\zeta(t))_{t \geq 0}, \qquad \qquad (\eta(t))_{t \geq 0} \Rightarrow (\xi(t))_{t \geq 0}, $$
as $\lambda \rightarrow \infty$. Here, `$\Rightarrow$' means convergence in the weak sense probabilistically and uniformly in time $t\geq 0$.
Indeed, we aim to show that the stochastic approximations can approximate the deterministic dynamics at any accuracy. Results of this type have been discussed in \cite{Jin2021, Latz2021}
considering smooth stochastic optimisation and in \cite{Dupuis} considering Markov chain Monte Carlo. The theoretical foundation is given by Kushner's perturbed test function theory \cite{Kushner1984,Kushner1990}. Indeed, we aim at employing Theorem 2 in Chapter 3 of \cite{Kushner1984}. Intuitively, this theorem states, that we can show weak convergence of one family of Feller processes to another Feller process, if the family is tight and if the corresponding infinitesimal generators converge in a certain sense. Here, we are allowed to test the family of generators with perturbed test functions.
 
 We start with the tightness, which follows from uniform equicontinuity of the processes.
 \begin{lemma} \label{lemma_tight} Let $\theta^{(0)}, \eta^{(0)} \in X$. 
 $(\bsi(t), \theta(t))_{t \geq 0}$ and   $(\bsi(t), \eta(t))_{t \geq 0}$ are  tight with respect to $\lambda > 0$.
 \end{lemma}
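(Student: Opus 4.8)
The plan is to establish tightness of each family in $C([0,T],X)$ for every $T>0$ (and then diagonalise over $T \in \mathbb{N}$ to pass to $[0,\infty)$) by exhibiting, uniformly in $\lambda$, a single compact set that contains almost every sample path. Since the paths are continuous and start from the fixed, deterministic initial values $\theta^{(0)}$ and $\eta^{(0)}$, the only ingredients I need are a uniform-in-$\lambda$ bound on the range together with a uniform-in-$\lambda$ modulus of continuity; the latter I would obtain from an equi-Lipschitz estimate, which is exactly the ``uniform equicontinuity'' alluded to.

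First I would record that both processes are bounded uniformly in $\lambda$. For $(\theta(t))_{t\geq 0}$ this is Lemma~\ref{lemma_simple_conver}(iii); crucially, the estimates there compare $\|\theta - \AtA^{-1}\Atb\|_2$ (non-increasing on $\{\bsi=0\}$) with $\|\theta\|_2$ (non-increasing on $\{\bsi=1\}$), so the resulting radius $R$ depends only on $\theta^{(0)}$, $\AtA$ and $\Atb$, and not on the switching times, hence not on $\lambda$. For $(\eta(t))_{t\geq 0}$ I would invoke Lemma~\ref{lemma_simple_conver_AC}(iv), but here care is needed, since the double-well subflow in parts (i)--(ii) of that lemma expands: one must observe that this is expansion of the \emph{distance between two trajectories}, not growth of a single trajectory. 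Indeed, a single double-well flow drives each coordinate towards $\{-1,0,1\}$ and caps its magnitude, so $\|\eta(t)\|_\infty$ cannot exceed $\max(\|\eta^{(0)}\|_\infty,1)$ on a $\{\bsi=1\}$ segment, while on $\{\bsi=0\}$ the linear flow contracts towards $(\PtP-\varepsilon\triangle')^{-1}\Ptd$; this again yields a radius independent of $\lambda$.

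Next I would bound the speed of the paths uniformly. On $\{\bsi=1\}$ the right semi-derivatives are $-\mathrm{sgn}(\theta(t))$ and $G''(\eta(t))$ (Propositions~\ref{Prop:ODEs} and \ref{Prop:ODEs_class}), each of $\ell_2$-norm at most $\sqrt{n}$; on $\{\bsi=0\}$ they are $-\nabla\dPhi(\theta(t)) = -(\AtA\theta(t)-\Atb)$ and $-\nabla\lPhi(\eta(t))$, affine functions of the state, hence bounded by a constant once the state is confined to the ball of radius $R$. Taking $L$ to be the maximum of these bounds gives a Lipschitz constant valid pathwise and independent of $\lambda$. Consequently every sample path lies in the set $K_T := \{ f \in C([0,T],X) : f(0)=\theta^{(0)},\ \|f\|_\infty \le R,\ \mathrm{Lip}(f) \le L \}$, which is compact by Arzel\`a--Ascoli; since the laws of $(\theta(t))_{0\le t\le T}$ are all supported on the single compact set $K_T$, the family is tight, and likewise for $(\eta(t))_{0\le t\le T}$.

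Finally, the switching component $(\bsi(t))_{t\geq 0}$ takes values in the compact set $\{0,1\}$, so its marginals are automatically tight; for the augmented pairs I would note that the downstream result (Theorem 2, Chapter 3 of \cite{Kushner1984}) only requires tightness of the slow components $(\theta(t))_{t\geq 0}$ and $(\eta(t))_{t\geq 0}$, with the fast variable $(\bsi(t))_{t\geq 0}$ entering through the perturbed generator. The genuinely delicate point is precisely this fast variable: as $\lambda \to \infty$ the process $(\bsi(t))_{t\geq 0}$ oscillates ever faster and is not path-tight in the Skorokhod $J_1$ sense, so joint tightness must be read in the averaged sense in which $(\bsi(t))_{t\geq 0}$ is absorbed by the test-function perturbation. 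I therefore expect the two main obstacles to be the uniform-in-$\lambda$ control of the range in the Allen--Cahn case (where the thresholding subflow is expansive) and this correct handling of the fast component.
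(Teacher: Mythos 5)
Your proof is correct and takes essentially the same approach as the paper's: uniform-in-$\lambda$ boundedness via Lemmas~\ref{lemma_simple_conver} and \ref{lemma_simple_conver_AC}, combined with a $\lambda$-independent pathwise Lipschitz bound, giving uniform equicontinuity and hence tightness (the paper cites Theorem 4 in Chapter 2 of \cite{Kushner1984} where you invoke Arzel\`a--Ascoli directly). Your two added cautions --- that single-trajectory boundedness in the Allen--Cahn case does not follow from the two-trajectory expansion estimates of Lemma~\ref{lemma_simple_conver_AC}(i)--(ii), and that $(\bsi(t))_{t \geq 0}$ is not path-tight as $\lambda \rightarrow \infty$ so the pair's tightness must be read through the slow component --- are sound refinements of points the paper's proof states only in passing.
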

 \begin{proof} $(\bsi(t))_{t \geq 0}$ is certainly tight, as it lives on a finite state space.
 We show tightness of $(\theta(t))_{t \geq 0}$ by proving that the paths of $(\theta(t))_{t \geq 0}$ are uniformly equicontinuous with respect to $\lambda$ and bounded. Boundedness is implied by the boundedness of $\theta_0$ and the contractivity of the piecewise processes, see Lemma~\ref{lemma_simple_conver}.  Note that --  independently of $\lambda > 0$ -- the paths $(\theta(t))_{t \geq 0}$ are functions constructed piecewise from Lipschitz continuous functions $(\zeta^{\dt}(t))_{t  \geq 0}$ and $(\zeta^{\rg}(t))_{t  \geq 0}$. Indeed, note that $(\zeta^{\dt}(t))_{t  \geq 0}$ is Lipschitz continuous due to its differentiability and contractivity. A function that is constructed piecewise from Lipschitz continuous functions is Lipschitz continuous itself, see Proposition 4.1.2 in \cite{Scholtes2012}. Here, the Lipschitz constant does not depend on $\lambda$. Hence, the paths are uniformly equicontinuous and, thus, tight; see Theorem 4 in Chapter 2 of \cite{Kushner1984}. $(\eta(t))_{t \geq 0}$ satisfies the same boundedness and smoothness conditions as $(\theta(t))_{t \geq 0}$. Thus, $(\bsi(t), \eta(t))_{t \geq 0}$ is tight, as well.
 \end{proof}
 
Now, we need to define the test functions and perturbed test functions.
 First, we focus on the sparse inversion case.
 To this end, let $f \in C^2_0(X)$ be a test function.
 We define the associated perturbed test function by $f^\lambda(t) = f(\theta(t))$ $(t \geq 0)$ and the action of $\mathcal{A}_\theta$ on $f^\lambda$ by
 \begin{align*}
      \mathcal{A}_\theta f^\lambda(t) &= \begin{cases} 
      \langle \nabla_{x}f(\theta(t)), -\nabla  \Phi_\dt(\theta(t)) \rangle,  &\text{ if } \bsi(t) = 0, \\
      \langle \nabla_{x}f(\theta(t)),- \mathrm{sgn}(\theta(t))\rangle  ,  &\text{ if } \bsi(t) = 1
      \end{cases} \qquad \qquad (t \geq 0).
\end{align*}
The following lemma is the technical main result to show the weak convergence motivated above.
 \begin{lemma} \label{lemma_pertu_Spar} Let $T < \infty$. Then,
 \begin{align}
 \label{eq:p-lim1}
    \sup_{\lambda>0, t \geq 0} &\mathbb{E}\left[\|f^\lambda(t)\|_2 \right]< \infty,  \\ \lim_{\lambda \rightarrow \infty}&\mathbb{E}\left[\|f^\lambda(t) - f(\theta(t))\|_2 \right] = 0 \qquad (t  \geq 0). \label{eq:p-lim2}
 \end{align}
 and
 \begin{equation} \label{eq:cond_exp}
\mathbb{E} \left\lvert\left[\int_{t}^T \mathbb{E}\left[\mathcal{A}_\theta f^\lambda(u) - \mathcal{A}_\zeta f(\theta(u))\mid \bsi(s) : t \geq s\right] \mathrm{d}u \right\rvert\right] \rightarrow 0 \qquad (\lambda \rightarrow \infty),
 \end{equation}

 for any $t \leq T$.
 \end{lemma}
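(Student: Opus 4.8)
The plan is to reduce everything to the boundedness and $\lambda$-uniform Lipschitz regularity of the paths from Lemmas~\ref{lemma_simple_conver} and \ref{lemma_tight}, together with the exponential decorrelation of the two-state switching process. The bounds \eqref{eq:p-lim1} and \eqref{eq:p-lim2} are the soft part. Since $f \in C^2_0(X)$ and $\theta(t)$ stays in a fixed bounded set $K \subseteq X$ for all $t$ and all $\lambda$ by Lemma~\ref{lemma_simple_conver}(iii), we get $\|f^\lambda(t)\|_2 = \|f(\theta(t))\|_2 \le \|f\|_\infty$ uniformly, giving \eqref{eq:p-lim1}; and \eqref{eq:p-lim2} holds because $f^\lambda(t)-f(\theta(t))$ is either identically zero or a correction of order $\lambda^{-1}$ whose supremum is controlled by $\|\nabla f\|_\infty$ and $\sup_{x \in K}\|\AtA x - \Atb\|_2 < \infty$.

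The substance is \eqref{eq:cond_exp}. First I would remove the conditioning: writing $D(u) := \mathcal{A}_\theta f^\lambda(u) - \mathcal{A}_\zeta f(\theta(u))$, Fubini gives $\int_t^T \mathbb{E}[D(u)\mid \sigma(\bsi(s):s\le t)]\,\mathrm{d}u = \mathbb{E}[\int_t^T D(u)\,\mathrm{d}u \mid \sigma(\bsi(s):s\le t)]$, and conditional Jensen then reduces \eqref{eq:cond_exp} to showing $\mathbb{E}|\int_t^T D(u)\,\mathrm{d}u| \to 0$. Using the explicit form of $\mathcal{A}_\theta f^\lambda$ and of $\mathcal{A}_\zeta$ through $G$ from Proposition~\ref{Prop:ODEs}, a direct computation shows that on the set where $\theta_i(u)\neq 0$ for all $i$ one has $D(u) = \chi(u)\psi(u)$, where $\chi(u) := \mathbf{1}[\bsi(u)=0]-\mathbf{1}[\bsi(u)=1]$ and $\psi(u) := \tfrac12\langle \nabla f(\theta(u)),\, -\nabla\dPhi(\theta(u)) + \mathrm{sgn}(\theta(u))\rangle$, while the remainder $R(u)$ is supported on the kink set $\{u : \exists\, i,\ \theta_i(u)=0\}$ and is bounded by a constant depending only on $\|\nabla f\|_\infty$ and $K$. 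I would further split $\psi = \psi_1 + \psi_2$ with $\psi_1(u) := -\tfrac12\langle\nabla f(\theta(u)),\nabla\dPhi(\theta(u))\rangle$ (Lipschitz in $u$, with $\lambda$-independent constant) and $\psi_2(u) := \tfrac12\langle\nabla f(\theta(u)),\mathrm{sgn}(\theta(u))\rangle$.

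For the smooth oscillating term $\chi\psi_1$ I would exploit timescale separation. The two-state chain obeys $\mathbb{E}[\chi(u)\chi(v)] = \exp(-2\lambda|u-v|)$ (from the transition kernel $I_t$ given in Section~\ref{Sec_Fell}), so $\chi$ decorrelates on the scale $\lambda^{-1}$, whereas $\psi_1$ is Lipschitz in time with constant $L$ independent of $\lambda$ (because $\theta$ is $\lambda$-uniformly Lipschitz by Lemma~\ref{lemma_tight} and $\nabla f,\nabla\dPhi$ are Lipschitz on $K$). Partitioning $[t,T]$ into $N = (T-t)/\Delta$ subintervals and freezing $\psi_1$ at the left endpoint of each yields $\mathbb{E}|\int_t^T \chi\psi_1\,\mathrm{d}u| \le \|\psi_1\|_\infty \sum_k \mathbb{E}|\int_{t_k}^{t_{k+1}}\chi\,\mathrm{d}u| + L(T-t)\Delta$, and the Cauchy--Schwarz estimate $\mathbb{E}|\int_{t_k}^{t_{k+1}}\chi\,\mathrm{d}u| \le (\int\!\!\int e^{-2\lambda|u-v|})^{1/2} \le \sqrt{\Delta/\lambda}$ gives a total of order $(\Delta\lambda)^{-1/2} + \Delta$, which vanishes on choosing, e.g., $\Delta = \lambda^{-1/3}$.

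The main obstacle is the remaining pair $\chi\psi_2$ and $R$, where the genuine non-smoothness of the flow meets the switching. Here $\psi_2$ is discontinuous exactly where some $\theta_i(u)$ crosses zero, so the freezing argument fails, and $\theta$ may dwell on $\{\theta_i=0\}$ (a fixed point of the thresholding flow $\dot\theta_i = -\mathrm{sgn}(\theta_i)$). I would control both through occupation-time estimates near the kink set: in the balanced regime $-(\nabla\dPhi(\theta))_i \in (-1,1)$ one checks that $R$ vanishes identically (since $G_i = (\nabla\dPhi)_i$ makes the coordinate-$i$ part of $\mathcal{A}_\zeta$ equal to the frozen thresholding contribution), and near-zero sign flips occur only with amplitude $O(\lambda^{-1})$ so that the expected total time $\theta$ spends within $O(\lambda^{-1})$ of any hyperplane $\{x_i=0\}$, and hence the net $\chi\psi_2$ and $R$ contributions, tend to zero; in the unbalanced regime $-(\nabla\dPhi(\theta))_i \notin[-1,1]$ the gradient drift pushes $\theta_i$ away from zero during the gradient phases, so sojourns at the kink happen only inside thresholding phases of mean length $\lambda^{-1}$ and contribute $O(\lambda^{-1})$. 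I expect this occupation-time control to be the hard part: unlike the oscillating term it cannot be handled by a soft averaging argument and requires a careful quantitative estimate of how long the non-smooth trajectory lingers at its kinks.
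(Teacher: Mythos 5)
Your treatment of \eqref{eq:p-lim1}--\eqref{eq:p-lim2} and your averaging estimate for the smooth term $\chi\psi_1$ (freezing on a mesh of width $\Delta$, Cauchy--Schwarz against $\mathbb{E}[\chi(u)\chi(v)]=e^{-2\lambda|u-v|}$, then $\Delta=\lambda^{-1/3}$) are correct. But the proof does not close where you yourself flag the difficulty, and the specific claims you make to fill that hole are false. First, the occupation-time estimate you rely on is not true in the balanced regime: if $\theta_i$ sits at $0$ with $(-\nabla\dPhi(\theta))_i\in(-1,1)$, then gradient phases push $\theta_i$ away at speed $|(\nabla\dPhi)_i|<1$ while thresholding phases pull it back at speed $1$ and then pin it at $0$; a renewal computation over the $\mathrm{Exp}(\lambda)$ cycles shows the process dwells at the kink for a fraction of time that is bounded away from $0$ \emph{uniformly in} $\lambda$ (this is precisely how the stochastic approximation emulates the limit flow's sticking at zero). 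Second, with your definition $R:=D-\chi\psi$, the claim that $R$ vanishes on balanced dwell intervals is wrong: there $\bsi(u)=1$, the coordinate-$i$ part of $D$ is indeed $0$ (both generators contribute $0$), but the coordinate-$i$ part of $\chi\psi$ equals $\tfrac12 \partial_i f(\theta(u))(\nabla\dPhi(\theta(u)))_i\neq 0$ in general, so $R_i=-[\chi\psi]_i\neq 0$. Consequently $\mathbb{E}\int|R|$ and $\mathbb{E}\bigl|\int\chi\psi_2\bigr|$ do \emph{not} separately tend to $0$; only their sum does, through an exact cancellation on the dwell set that your organisation (separate absolute-value estimates controlled by kink occupation time) forfeits. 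As written, the argument for the non-smooth part would fail.

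The root cause is your very first step: discarding the conditional expectation by Jensen. The conditioning is not a nuisance to be removed --- it is the mechanism of the paper's proof. Conditioning on $\bsi(t)=i$ and using the explicit kernel $I_{u-t}(\cdot\mid i)$, the paper writes $\mathbb{E}[\mathcal{A}_\theta f^\lambda(u)-\mathcal{A}_\zeta f(\theta(u))\mid \bsi(t)=i]$ as a mixture whose $\tfrac12$-$\tfrac12$ part cancels against $\mathcal{A}_\zeta f$ (for $\mathrm{Unif}[t,T]$-a.e.\ $u$, by absolute continuity of the path), leaving an explicit prefactor $e^{-2\lambda(u-t)}$ times a bounded random term $H(u,i)$; a single Cauchy--Schwarz in $u$ then gives decay of order $\lambda^{-1/2}$, with no pathwise analysis of kinks, sign changes, or occupation times at all. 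By dropping the conditioning you set out to prove the strictly stronger unconditional statement $\mathbb{E}|\int_t^T D(u)\,\mathrm{d}u|\to 0$, and that stronger statement is exactly where the delicate coordinate-by-coordinate bookkeeping at the kink set becomes unavoidable --- and where your sketch, in its current form, is incorrect.
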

 \begin{proof}
 1. The boundedness in \eqref{eq:p-lim1} follows from the boundedness of the intial value, the contractivity of the process, and the boundedness of $f$. \eqref{eq:p-lim2} is trivial, as by definition $f^\lambda = f(\theta(\cdot))$.
 
2. Let $u \in [t, T]$. Note that $(\bsi(t))_{t \geq 0}$ is a Markov process.  Thus, 
$$
\mathbb{E}\left[\mathcal{A}_\theta f^\lambda(u) - \mathcal{A}_\zeta f(\theta(u))\mid \bsi(s) : t \geq s\right] = \mathbb{E}\left[\mathcal{A}_\theta f^\lambda(u) - \mathcal{A}_\zeta f(\theta(u))\mid \bsi(t) \right]
$$
Let now $i \in I$. Then, 
\begin{align*}
    \mathbb{E}&\left[\mathcal{A}_\theta f^\lambda(u) - \mathcal{A}_\zeta f(\theta(u))\mid \bsi(t) = i\right] \\ &= I_t(\{ 0 \} \mid i) \langle \nabla_{\theta} f^\lambda(u),- \left( \mathbf{1}[\theta_i(u) > 0] - \mathbf{1}[\theta_i(u) < 0] \right)_{i=1}^n\rangle + I_t(\{ 1 \} \mid i) \langle \nabla_{\theta}f^\lambda(u), -\nabla  \Phi_\dt(\theta(t)) \rangle \\ &\qquad \qquad - \langle \nabla f(\theta(t)), -\nabla \frac12 \Phi_\dt(x) + \frac12 G(x)\rangle, \\
    &= \left(\frac{- \exp(-2\lambda (u-t))}{2} + \exp(- 2\lambda  (u-t))\mathbf{1}[i = 0] \right) \langle \nabla_{\theta} f^\lambda(u),- \left( \mathbf{1}[\theta_i(u) > 0] - \mathbf{1}[\theta_i(u) < 0] \right)_{i=1}^n\rangle \\
    &\qquad \qquad + \left(\frac{- \exp(-2\lambda (u-t))}{2} + \exp(- 2\lambda  (u-t))\mathbf{1}[i = 1] \right) \langle \nabla_{\theta}f^\lambda(u), -\nabla  \Phi_\dt(\theta(u)) \rangle \\
    &= \exp(-2\lambda (u-t))\left( \mathbf{1}[i = 0]-\frac{ 1}{2} \right) \langle \nabla_{\theta} f^\lambda(u),- \left( \mathbf{1}[\theta_i(u) > 0] - \mathbf{1}[\theta_i(u) < 0] \right)_{i=1}^n\rangle \\
    &\qquad \qquad + \exp(-2\lambda (u-t)) \left(\mathbf{1}[i = 1]-\frac{1}{2} \right) \langle \nabla_{\theta}f^\lambda(u), -\nabla  \Phi_\dt(\theta(u)) \rangle \\
    &=: \exp(-2\lambda (u-t)) H(u,i).
\end{align*}
where the second  equality holds for $\mathrm{Unif}[t, T]$-almost every $u$, due to the absolute continuity of $(\theta(u))_{u \in [t, T]}.$
Now, we have 
\begin{align*}
    \left\lvert \int_t^T\exp(-2\lambda (u-t)) H(u,i) \mathrm{d}u \right\rvert^2 &\leq \int_t^T H(u,i)^2 \mathrm{d}u \int_t^T\exp(-4\lambda (u-t))  \mathrm{d}u \\
    &= \int_t^T H(u,i)^2 \mathrm{d}u \cdot \exp(4\lambda t) \cdot \frac{\exp(-4\lambda t)-\exp(-4 \lambda T)}{4 \lambda} \\
    &= \int_t^T H(u,i)^2 \mathrm{d}u \cdot  \frac{1-\exp(-4 \lambda (T-t))}{4 \lambda}
\end{align*}
Finally, taking expectation:
\begin{align*}
    \mathbb{E}\left[ \left\lvert \int_t^T\exp(-2\lambda (u-t)) H(u,i) \mathrm{d}u \right\rvert^2 \right] \leq \mathbb{E}\left[\int_t^T H(u,i)^2 \mathrm{d}u \right] \cdot  \frac{1-\exp(-4 \lambda (T-t))}{4 \lambda},
\end{align*}
the right-hand side of which goes to $0$ as $\lambda \rightarrow \infty$.
 \end{proof}
 
Second, we discuss the same result for the stochastic approximation of the discrete Allen--Cahn flow. We again choose $f \in C^2_0(X)$ and $f^\lambda(t) :=  f(\eta(t))$ $(t \geq 0)$. The action of $\mathcal{A}_\eta$ on $f^\lambda$ is given by
\begin{align*}
      \mathcal{A}_\eta f^\lambda(t) &= \begin{cases} 
      \langle \nabla_{x}f(\eta(t)), -\nabla  \lPhi(\eta(t)) \rangle,  &\text{ if } i = 0, \\
      \langle \nabla_{x}f(\eta(t)),G''(\eta(t))\rangle,  &\text{ if } i = 1, 
      \end{cases} \qquad \qquad (t \geq 0).
\end{align*}
As for the sparse inversion problem, we can show the following statement about Allen--Cahn.
\begin{lemma} \label{lemma_pertu_AC} Let $T < \infty$. Then,
 \begin{align*}
    \sup_{\lambda>0, t \geq 0} &\mathbb{E}\left[\|f^\lambda(t)\|_2 \right]< \infty,  \\ \lim_{\lambda \rightarrow \infty}&\mathbb{E}\left[\|f^\lambda(t) - f(\eta(t))\|_2 \right] = 0 \qquad (t  \geq 0).
 \end{align*}
 and
 \begin{equation*}
\mathbb{E} \left\lvert\left[\int_{t}^T \mathbb{E}\left[\mathcal{A}_\eta f^\lambda(u) - \mathcal{A}_\xi f(\eta(u))\mid \bsi(s) : t \geq s\right] \mathrm{d}u \right\rvert\right] \rightarrow 0 \qquad (\lambda \rightarrow \infty),
 \end{equation*}
 for any $t \leq T$.
 \end{lemma}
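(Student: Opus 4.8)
The plan is to mirror, essentially verbatim, the proof of Lemma~\ref{lemma_pertu_Spar}, replacing the pair of sparse subflow directions $(-\nabla\dPhi,\, -\mathrm{sgn})$ by the pair of Allen--Cahn subflow directions $(-\nabla\lPhi,\, G'')$ and the full-flow direction $-\tfrac12\nabla\dPhi + \tfrac12 G$ by $-\tfrac12\nabla\lPhi + \tfrac12 G'$ (see Proposition~\ref{Prop:ODEs_class}). The first two assertions are immediate: since $f \in C_0^2(X) \subseteq C_0(X)$ is bounded and $f^\lambda = f(\eta(\cdot))$ by definition, we have $\|f^\lambda(t)\|_2 = |f(\eta(t))| \le \|f\|_\infty$ uniformly in $\lambda$ and $t$, giving the supremum bound, while $f^\lambda(t) - f(\eta(t)) \equiv 0$ gives the limit. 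All the substance lies in the third claim.

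For the integral estimate I would condition on $\bsi(t)$. By the Markov property of $(\bsi(t))_{t\ge0}$, conditioning on the whole history $\{\bsi(s): s \le t\}$ collapses to conditioning on $\bsi(t)$, exactly as in Lemma~\ref{lemma_pertu_Spar}. Using the transition kernel of $(\bsi(t))_{t\ge0}$ from Section~\ref{Sec_Fell}, written as $I_{u-t}(\{j\}\mid i) = \tfrac12 + \exp(-2\lambda(u-t))(\mathbf{1}[i=j] - \tfrac12)$, the conditional expectation $\mathbb{E}[\mathcal{A}_\eta f^\lambda(u) \mid \bsi(t) = i]$ splits into a $\lambda$-independent average $\tfrac12\langle \nabla f(\eta(u)), -\nabla\lPhi(\eta(u))\rangle + \tfrac12\langle \nabla f(\eta(u)), G''(\eta(u))\rangle$ and an $\exp(-2\lambda(u-t))$-weighted remainder. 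Subtracting $\mathcal{A}_\xi f(\eta(u)) = \langle \nabla f(\eta(u)), -\tfrac12\nabla\lPhi(\eta(u)) + \tfrac12 G'(\eta(u))\rangle$, the average cancels up to $\tfrac12\langle\nabla f(\eta(u)), G''(\eta(u)) - G'(\eta(u))\rangle$. By the explicit forms in Proposition~\ref{Prop:ODEs_class}, $G''$ and $G'$ agree componentwise whenever $\eta_i(u) \notin \{-1,1\}$; they differ only on the kink set, where the component is held fixed, so the path's occupation time of this set has Lebesgue measure zero by absolute continuity of $(\eta(u))_{u\in[t,T]}$. Hence for $\mathrm{Unif}[t,T]$-almost every $u$ the mismatch vanishes, and the conditional difference equals $\exp(-2\lambda(u-t))\, H(u,i)$ for a suitable $H$.

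It then remains to bound $\int_t^T \exp(-2\lambda(u-t)) H(u,i)\,\mathrm{d}u$. Applying Cauchy--Schwarz gives
\[
\left| \int_t^T \exp(-2\lambda(u-t)) H(u,i)\,\mathrm{d}u \right|^2 \le \int_t^T H(u,i)^2\,\mathrm{d}u \cdot \frac{1 - \exp(-4\lambda(T-t))}{4\lambda},
\]
so after taking expectations the right-hand side is $O(1/\lambda) \to 0$ as $\lambda \to \infty$, provided $\mathbb{E}\int_t^T H(u,i)^2\,\mathrm{d}u$ stays finite uniformly in $\lambda$.

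I expect the main obstacle to be precisely establishing that uniform finiteness, which is where the Allen--Cahn case genuinely departs from the sparse one: the double-well subflow $(\xi^{\dw}(t))_{t\ge0}$ is expansive rather than contractive, so I cannot simply invoke contractivity as in Lemma~\ref{lemma_pertu_Spar}. Instead I would use Lemma~\ref{lemma_simple_conver_AC}(iv) to keep $(\eta(u))_{u\in[t,T]}$ bounded, whence $\nabla\lPhi$ (affine) and $G''$ are bounded along the path and, together with $\|\nabla f\|_\infty < \infty$ for $f \in C_0^2(X)$, render $H$ bounded and hence $\int_t^T H(u,i)^2\,\mathrm{d}u$ finite. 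The delicate points are this bookkeeping around the expansive dynamics and the measure-zero argument on the kink set $\{\eta_i \in \{-1,1\}\}$: unlike the single kink at $0$ in the sparse problem, $\pm 1$ are attracting stationary points of the subflow, so one must argue carefully that an attracted component still contributes only a Lebesgue-null set of times on which $G' \ne G''$.
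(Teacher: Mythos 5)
Your proposal takes exactly the paper's route: the paper's own proof of this lemma is a single line stating that it proceeds identically to that of Lemma~\ref{lemma_pertu_Spar}, which is precisely the mirroring (with $-\nabla\lPhi$, $G''$ replacing $-\nabla\dPhi$, $-\mathrm{sgn}$) that you carry out. Your extra care about the kink set $\{\eta_i \in \{-1,1\}\}$ where $G'$ and $G''$ differ, and about replacing contractivity by the boundedness from Lemma~\ref{lemma_simple_conver_AC}(iv) since the double-well subflow is expansive, correctly fills in details the paper leaves implicit.
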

\begin{proof}
 The proof proceeds identical to that of Lemma~\ref{lemma_pertu_Spar}.
\end{proof}
 
We now formulate the main approximation result. 
 \begin{theorem} \label{Theo_weakConv}
 Let $\zeta(0) = \theta(0) = \theta^{(0)} \in X$ and $\xi(0) = \eta(0) = \eta^{(0)} \in X$. Then, $(\theta(t))_{t \geq 0} \Rightarrow (\zeta(t))_{t \geq 0},$ and $(\eta(t))_{t \geq 0} \Rightarrow (\xi(t))_{t \geq 0},$
as $\lambda \rightarrow \infty$.
 \end{theorem}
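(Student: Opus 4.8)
The plan is to invoke Kushner's perturbed test function theory, specifically Theorem~2 in Chapter~3 of \cite{Kushner1984}, which reduces the desired weak convergence to two structural conditions: tightness of the approximating family and convergence of the associated generators when tested against perturbed test functions. Both ingredients have essentially been assembled already in the preceding lemmas, so the proof should be an orchestration rather than a fresh computation. First I would verify that the hypotheses of Kushner's theorem are met. Tightness of $(\bsi(t), \theta(t))_{t \geq 0}$ and $(\bsi(t), \eta(t))_{t \geq 0}$ with respect to $\lambda$ is exactly the content of Lemma~\ref{lemma_tight}. The generator-convergence condition, together with the uniform integrability/boundedness of the perturbed test functions, is furnished by Lemma~\ref{lemma_pertu_Spar} for the sparse inversion process and Lemma~\ref{lemma_pertu_AC} for the Allen--Cahn process.

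Next I would spell out the correspondence between our notation and the abstract framework of \cite{Kushner1984}. The perturbed test function is $f^\lambda(t) = f(\theta(t))$ (respectively $f(\eta(t))$), which differs from the unperturbed test function $f$ by a vanishing amount in the sense of \eqref{eq:p-lim2}, so condition \eqref{eq:p-lim1} and \eqref{eq:p-lim2} certify that the perturbation is asymptotically negligible and uniformly bounded. The crucial estimate \eqref{eq:cond_exp} asserts precisely that the conditionally-averaged discrepancy between the stochastic generator $\mathcal{A}_\theta$ acting on $f^\lambda$ and the deterministic subgradient-flow generator $\mathcal{A}_\zeta$ acting on $f$ vanishes, integrated over any finite horizon $[t, T]$, as $\lambda \rightarrow \infty$. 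This is exactly the generator-convergence hypothesis that Kushner's theorem requires. I would then apply the theorem to conclude that $(\theta(t))_{t \geq 0}$ converges weakly and uniformly on compact time intervals to the process whose generator is $\mathcal{A}_\zeta$, which by the identification in Section~\ref{Sec_Fell} is precisely the sparse inversion flow $(\zeta(t))_{t \geq 0}$; the identical argument with $\mathcal{A}_\eta$, $\mathcal{A}_\xi$, and Lemma~\ref{lemma_pertu_AC} yields $(\eta(t))_{t \geq 0} \Rightarrow (\xi(t))_{t \geq 0}$.

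One point that deserves care is the matching of initial conditions: the theorem identifies the limit only up to its law, so I would stress that we have set $\zeta(0) = \theta(0)$ and $\xi(0) = \eta(0)$, and that the limiting generator has a \emph{unique} associated Feller process (the subgradient flow is deterministic and uniquely solvable by Proposition~\ref{prop_AC_exist} and the discussion around \eqref{EQ:full_subgrad}). Uniqueness of the limit is what upgrades subsequential weak convergence, guaranteed by tightness via Prohorov's theorem, to convergence of the full family as $\lambda \rightarrow \infty$. I expect the main obstacle to be conceptual rather than computational: one must confirm that Kushner's theorem applies despite the non-smoothness of the subgradient flows, i.e. that working with right semi-derivatives (as developed in Section~\ref{Sec_Fell}) genuinely produces a legitimate generator to which the perturbed test function machinery applies. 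Since the paper has already argued in Section~\ref{Sec_Fell} that the right semi-derivative coincides with the true generator on the appropriate test-function subspace, and since the test functions here lie in $C^2_0(X)$, this concern is addressed and the theorem's hypotheses are satisfied.
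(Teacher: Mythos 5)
Your proposal is correct and takes essentially the same route as the paper: the paper's proof is a one-line invocation of Kushner's perturbed test function theorem from \cite{Kushner1984} combined with Lemmas~\ref{lemma_tight}, \ref{lemma_pertu_Spar}, and \ref{lemma_pertu_AC}, exactly as you orchestrate it. The extra detail you provide (identification of the limit via the generators from Section~\ref{Sec_Fell}, uniqueness of the deterministic limiting flow, and the legitimacy of the right semi-derivative generators) matches the paper's surrounding discussion rather than deviating from it.
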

 \begin{proof}
 It follows from Theorem 4 in Chapter 2 of \cite{Kushner1984} due to Lemmas~\ref{lemma_tight}, \ref{lemma_pertu_Spar}, and \ref{lemma_pertu_AC}.
 \end{proof}

\section{Illustrations} \label{Sec_Illust}
In the following, we aim to illustrate the use of the stochastic approximations of the sparse inversion flow and the discrete Allen--Cahn equation. We \review{start with} simplified examples allowing us \review{to} stay close to the discussed setting. Thus, we solve all ODEs and differential inclusions by their analytical solution  \review{in Subsections~\ref{subsec:simple_exmpl} and \ref{Subsec_Ill_class1}. In Subsection~\ref{Subsec_Ill_class2}, we then move on to a higher dimensional classification problem. Here, we solve the linear ODEs with a stable time stepping scheme.}
\subsection{Sparse inversion} \label{subsec:simple_exmpl}
We consider again the example we use as an illustration in Figure~\ref{fig:simple_example}. Here, we minimise the target function $\hPhi(\theta) := \frac{1}{2} (\theta-4)^2 + |\theta|$ which is defined for $\theta \in X := \mathbb{R}$. 

The minimiser can be computed analytically in this case, giving $\theta_* = 3.$ Our stochastic approximation uses, of course, the potentials:
$$
\dPhi(\theta) := \frac{1}{2} (\theta-4)^2,  \qquad \rPhi(\theta) :=  |\theta| \qquad (\theta \in X).
$$
We now simulate $(\theta(t))_{t \geq 0}$ in this case for $\lambda \in \{0.25, 2.5, 25, 250\}$. We generate a total of $10^4$ realisations of these processes starting at $\theta^{(0)} = 0$ and keep $\theta(20)$ in the memory. We use these samples to depict the probability distributions $\mathbb{P}(\theta(20) \in \cdot )$ in Figure~\ref{fig:hist_simple_exmpl}. These distributions shall act as approximations to the stationary distribution $\pi_*$ that we have discussed in Theorem~\ref{theo_SP_app_longtime}.
\begin{figure}
    \centering
    \includegraphics[scale = 0.85]{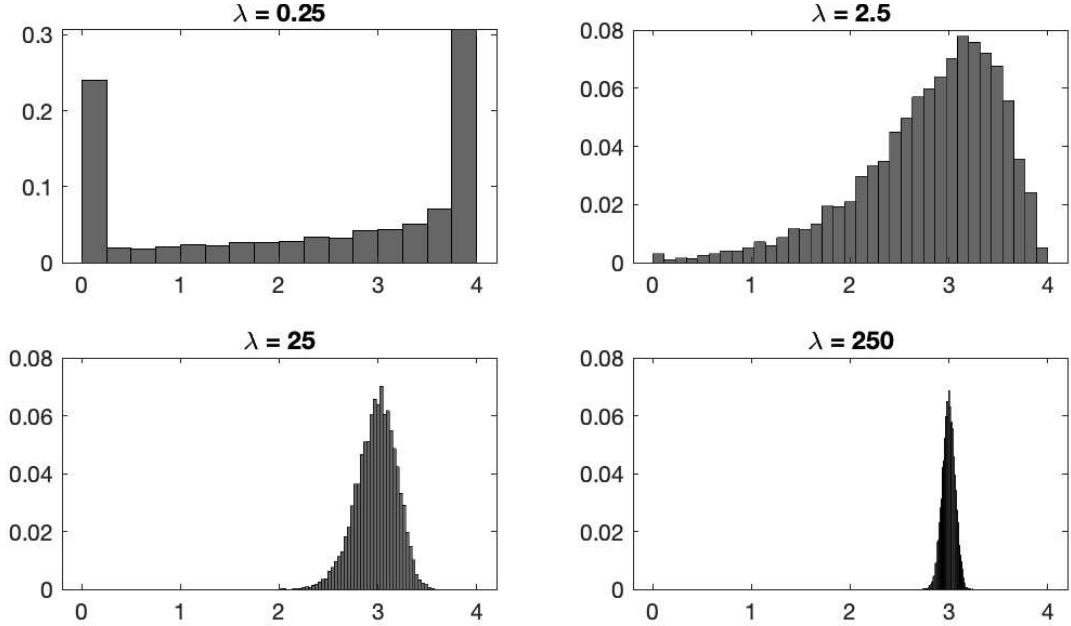}
    \caption{Histograms of $10^4$ realisations of $\theta(20)$ for $\lambda \in \{0.25, 2.5, 25, 250\}$.}
    \label{fig:hist_simple_exmpl}
\end{figure}
In the case $\lambda = 0.25$ we essentially see the process going back and forth between $0$ and $4$, which are the minimisers of $\rPhi$ and $\dPhi$, respectively. This happens due to the very large mean waiting time in $(\bsi(t))_{t \geq 0}$.  As $\lambda$ increases, the mean waiting times decrease and we see that the probability distribution $\mathbb{P}(\theta(20) \in \cdot )$ concentrates around $\theta_* = 3$, which is exactly what we expect due to Theorem~\ref{Theo_weakConv}. This can also be seen in Table~\ref{table_simple_exmpl_mean_Cov}, where we note sample mean and sample variance of the realisations. There we also see that the mean of the process is a fairly good estimate for $\theta_*$ in most cases.
\begin{table}[]
\centering
\begin{tabular}{l|rrrr}
$\lambda$          & 0.25  & 2.5   & 25    & 250   \\ \hline
Sample Mean     & 2.300  & 2.802  & 2.981  & 2.997  \\
Sample Variance & 2.556 & 0.507 & 0.041 & 0.004
\end{tabular}

\caption{Sample means and sample variances of $\mathbb{P}(\theta(20) \in \cdot)$ for different cases of $\lambda$.}
\label{table_simple_exmpl_mean_Cov}
\end{table}

\subsection{Classification in 1D} \label{Subsec_Ill_class1}
We now study a simple classification problem on the set $I = \{1,\ldots,200\}$. Indeed, we aim to find a classification vector $\eta^{\dagger} \in \{-1,1\}^{200}$ given that we observed a (small) part of the entries of $\eta^{\dagger}$ and that we assume a certain spatial clustering amongst the classes $-1$ and $1$. We give a plot of $\eta^{\dagger}$ in Figure~\ref{fig:etadagger}, where we made sure that $\eta^\dagger$ is non-trivial due to non-linear, non-monotonic, and non-oscillating behaviour.
\begin{figure}
    \centering
    \includegraphics[scale = 0.85]{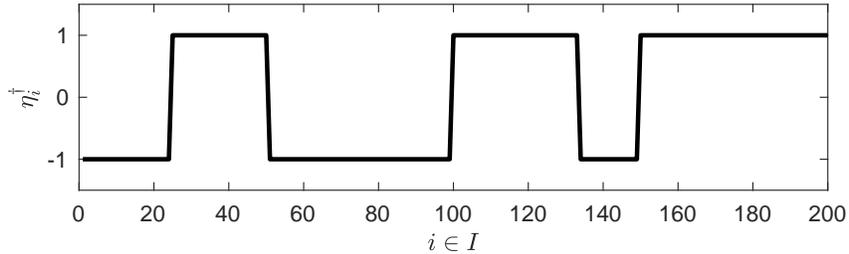}
    \caption{A plot of the underlying `true' classification $\eta^\dagger$ in the 1D example.}
    \label{fig:etadagger}
\end{figure}

To retrieve the classification based on the data, we employ the stochastic approximation of the discrete Allen--Cahn equation on $X = \mathbb{R}^{200}$. Here, we employ a very basic discrete Laplacian $\triangle'$\review{$_1$}, given by the tridiagonal matrix with diagonal values $-50$ and off-diagonal values $25$. This discrete Laplacian can be derived through a centred finite differences approximation of a Laplacian on an open interval with Dirichlet $=0$ boundary conditions and discretisation width $h = 0.2$. We set the weight $\alpha := 1$.

In our tests, we study different combinations of transition rates $\lambda \in \{1,10\}$ and $\varepsilon \in \{10^{-2}, 10^{-4}, 10^{-6}\}$. In each case, we start with $\eta(0) = 0$. We assume that we have observed every $5^{\rm th}$ entry of $\eta^\dagger$, i.e. we observe the class of $\{1, 6, \ldots, 196 \} \subseteq I$. In the following figures, we depict the stochastic approximation $(\eta(t))_{t \geq 0}$. Indeed, we plot means and standard deviations of $\eta(4), \eta(16), \eta(64)$ in Figure\review{s}~\ref{Fig:AC-2}, \ref{Fig:AC-4}, and \ref{Fig:AC-6}. Means and standard deviations are estimated through $100$ realisations of the stochastic approximation.

In the figures, we see overall that the larger $\lambda (=10)$ value helps the system to stabilise and exhibit physical behaviour  after some time. `Physical' means that the function is approximately piecewise constant with values close to $-1$ and $1$. The smaller $\lambda (=1)$ leads to a more noisy ($\varepsilon \in \{10^{-4}, 10^{-6}\}$ or completely non-physical ($\varepsilon = 10^{-2}$) behaviour. Before the system stabilises with a piecewise constant solution around $\{-1, 1\}$, it appears to be dominated by the linear process. This is expected as we start at $0$, where the double-well subgradient flow moves slowly, if at all. The combination $\varepsilon = 10^{-2}, \lambda = 10$ appears particularly efficient. The relatively large $\varepsilon$ leads to the diffusion/fidelity process to converge quickly to stationarity, which then amplifies the thresholding procedure within $[0,1]$. Computing a single realisation of $\eta(16)$ in this setting took in our experiment on a MacBook Pro (Intel i7, 4 cores @ 2.6Ghz; 16GB Ram) on average $0.22 (\pm 0.05)$ seconds. This appears to be very fast given that we solve the linear part by computing matrix exponentials.

\begin{figure}
    \centering
    \includegraphics[scale = 0.7]{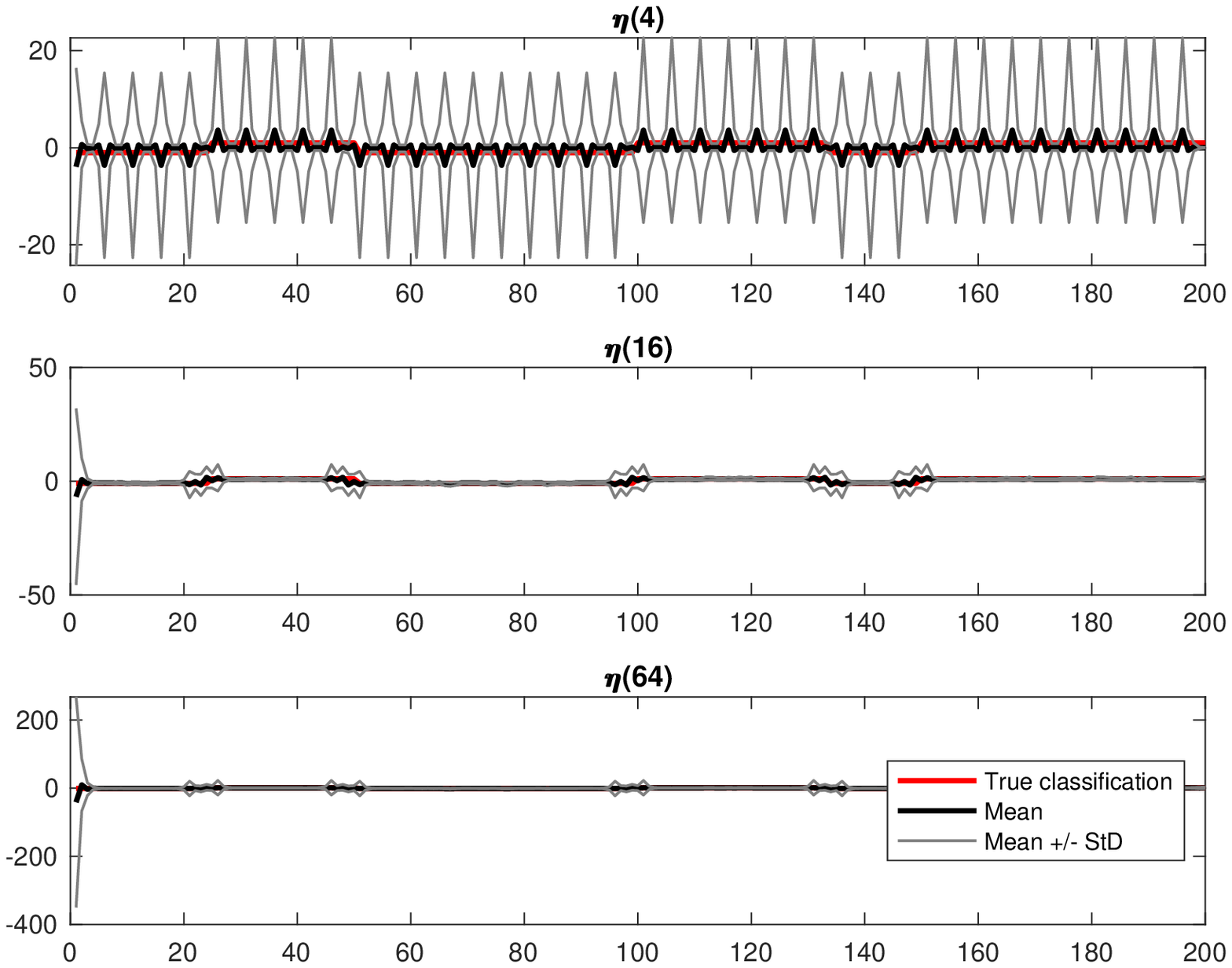} \includegraphics[scale = 0.7]{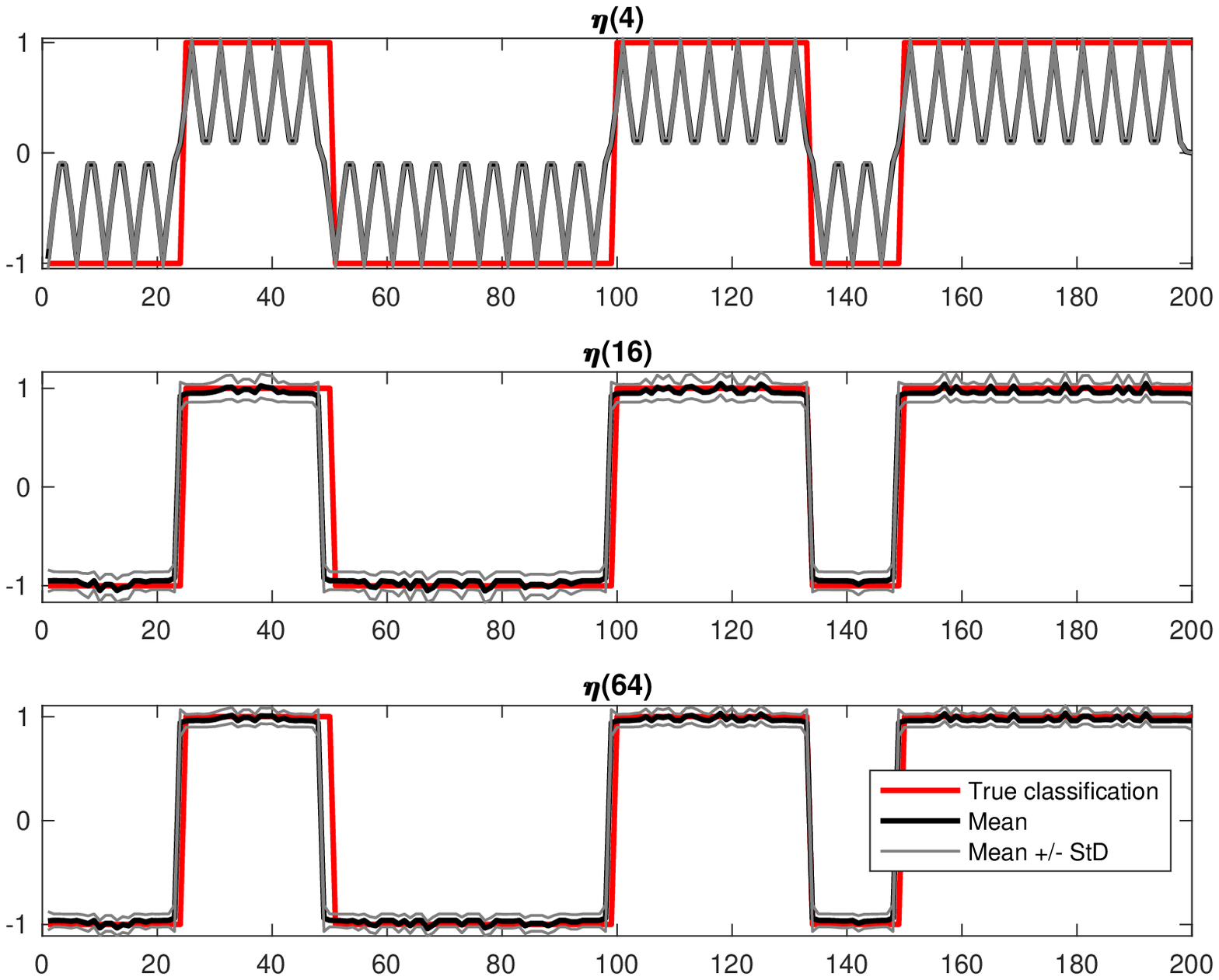}
    \caption{Plots of means (black thick lines), means $\pm$ standard deviations (grey thin lines)of $\eta(4), \eta(16), \eta(64)$, given $\varepsilon= 10^{-2}$, as well as $\lambda = 1$ (top) and $\lambda = 10$ (bottom). As a baseline, we compare with the true classification $\eta^\dagger$ (red).}
    \label{Fig:AC-2}
\end{figure}

\begin{figure}
    \centering
    \includegraphics[scale = 0.7]{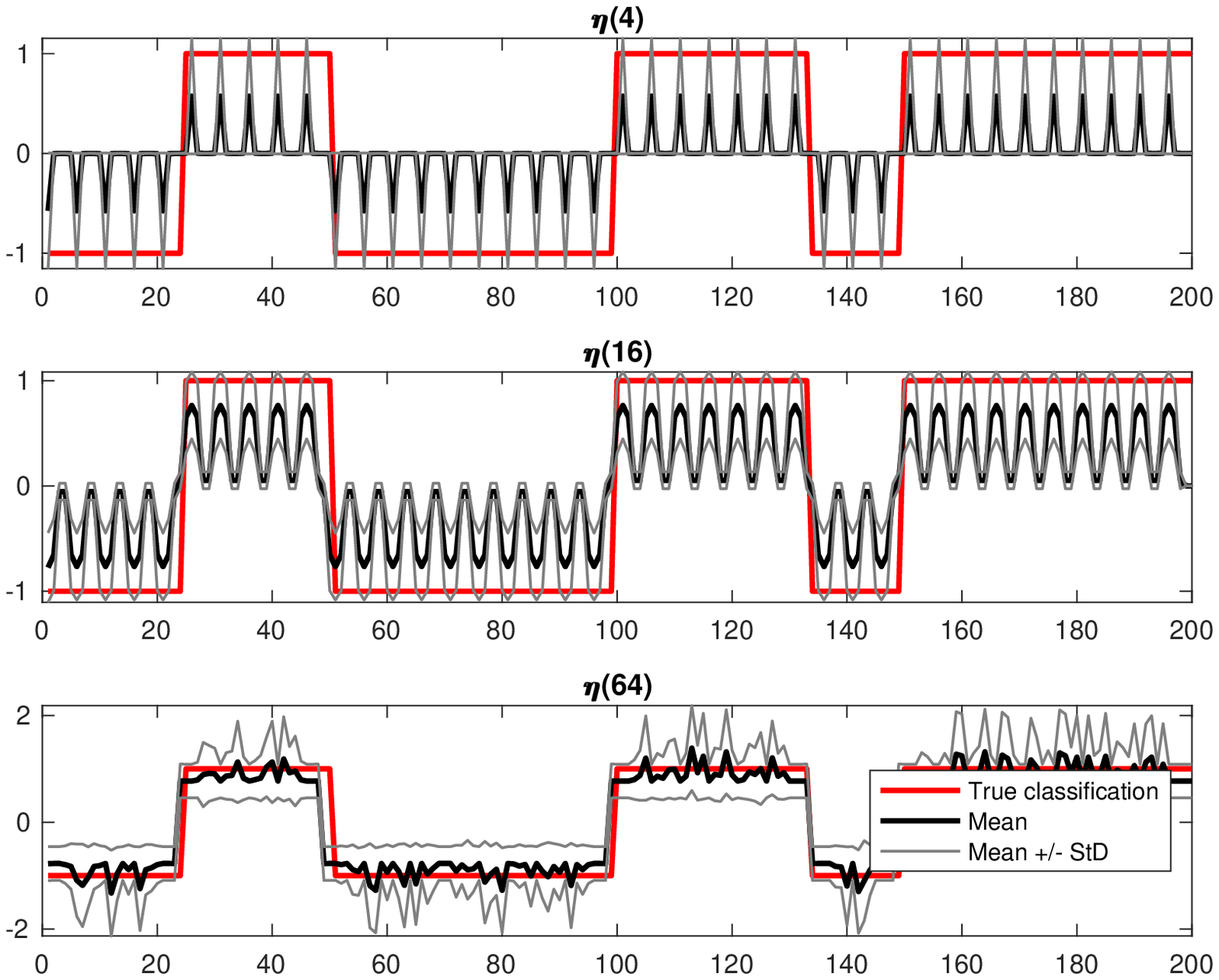} \includegraphics[scale = 0.7]{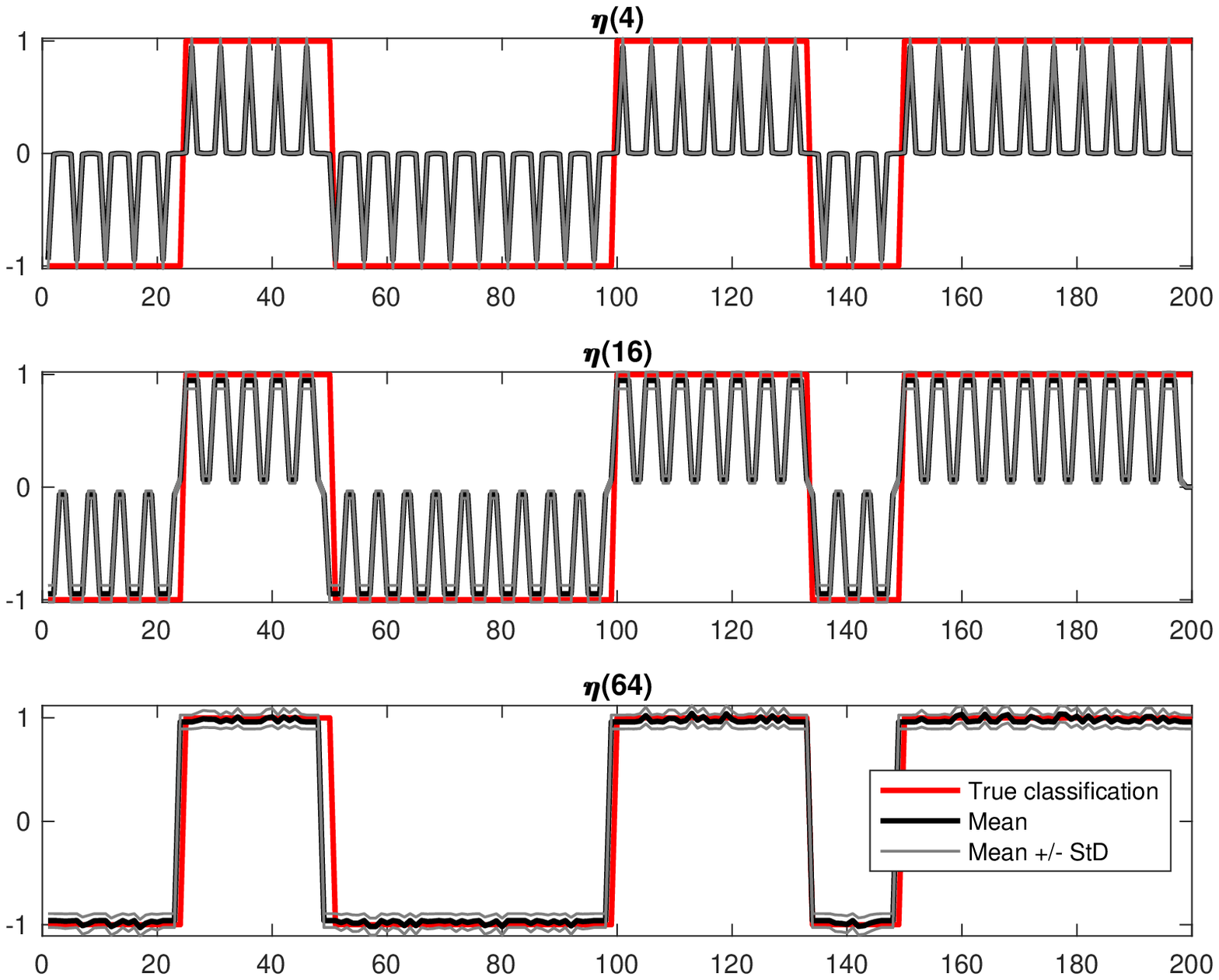}
    \caption{Plots of means (black thick lines), means $\pm$ standard deviations (grey thin lines)of $\eta(4), \eta(16), \eta(64)$, given $\varepsilon= 10^{-4}$, as well as $\lambda = 1$ (top) and $\lambda = 10$ (bottom). As a baseline, we compare with the true classification $\eta^\dagger$ (red).}
    \label{Fig:AC-4}
\end{figure}

\begin{figure}
    \centering
    \includegraphics[scale = 0.7]{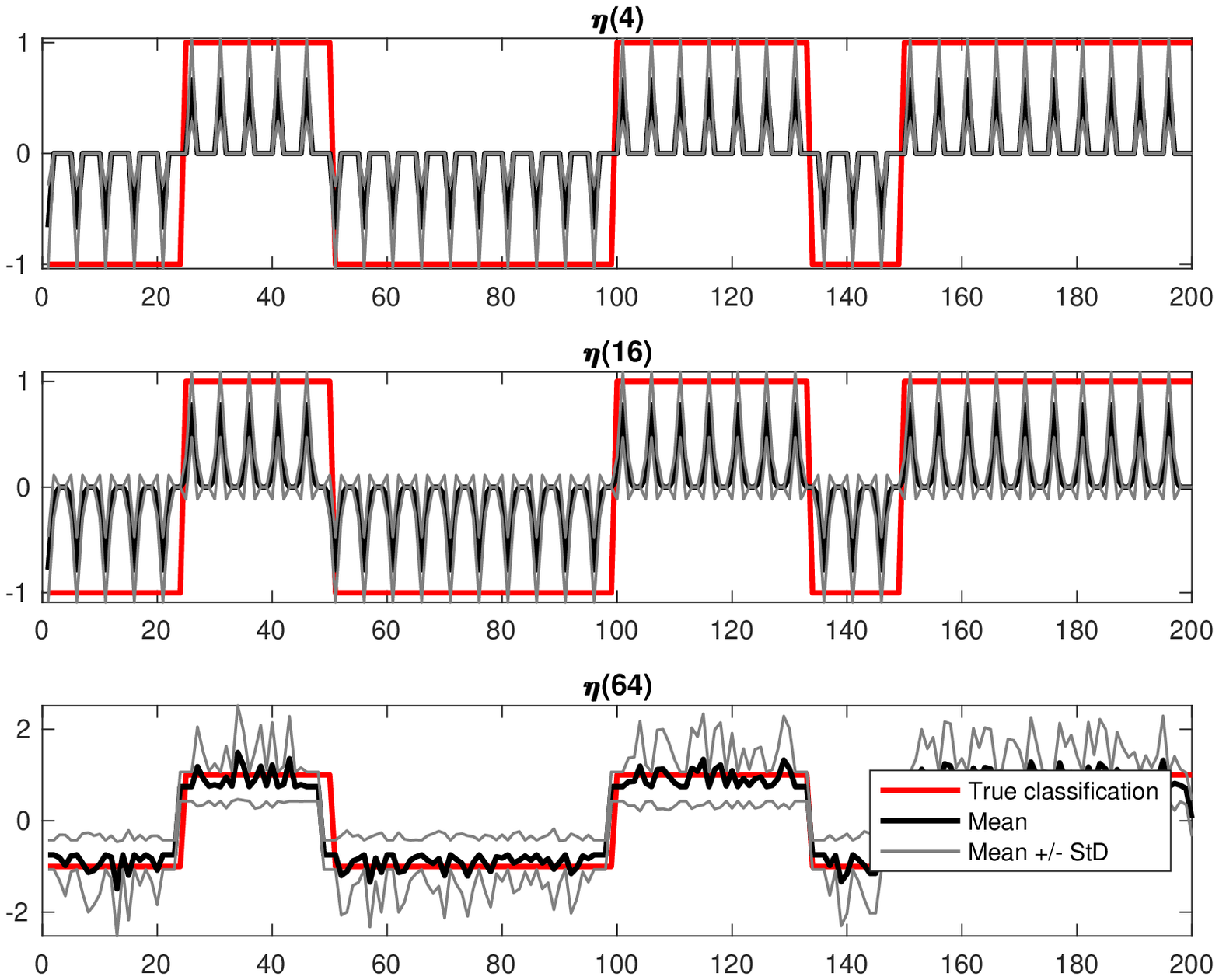} \includegraphics[scale = 0.7]{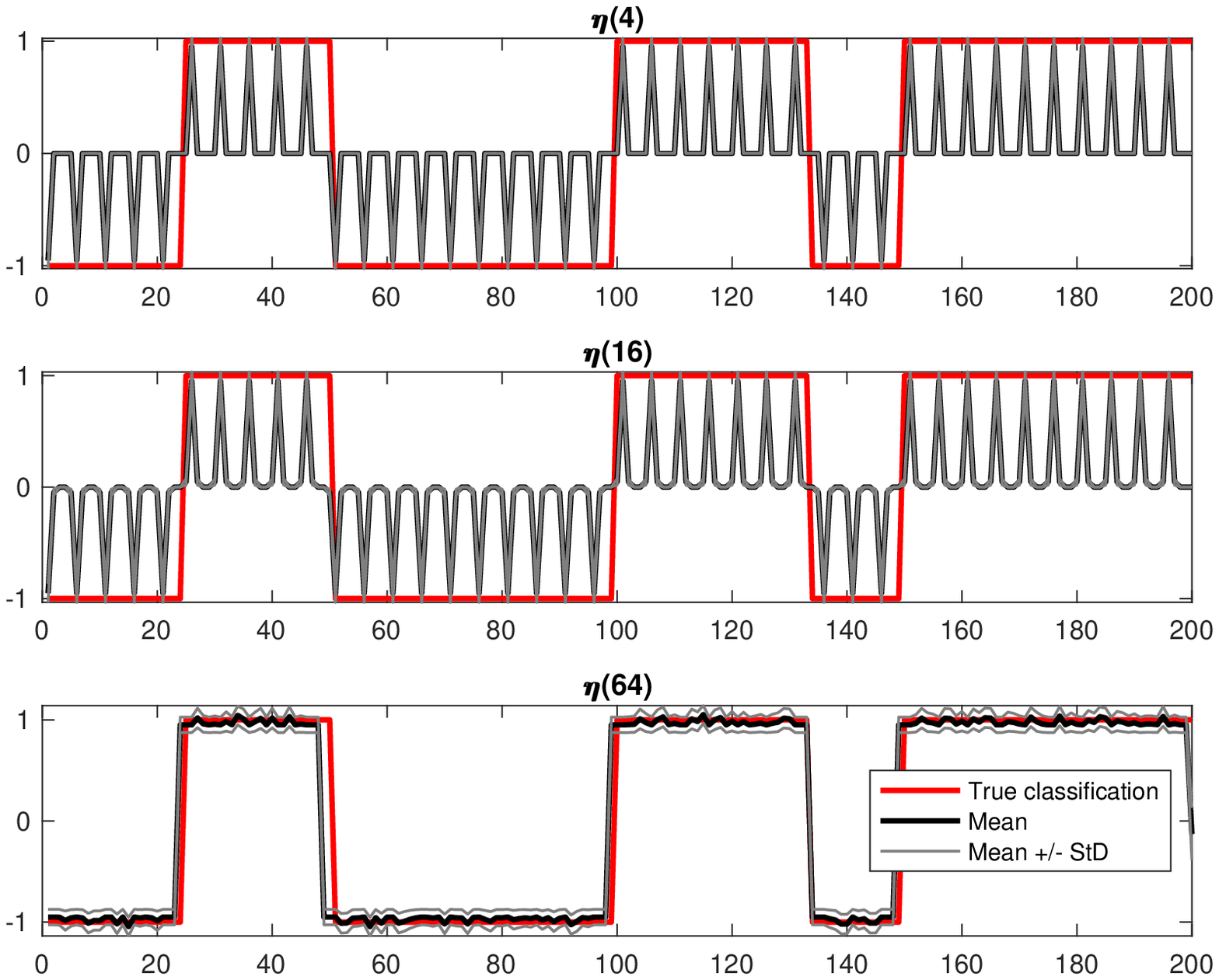}
    \caption{Plots of means (black thick lines), means $\pm$ standard deviations (grey thin lines)of $\eta(4), \eta(16), \eta(64)$, given $\varepsilon= 10^{-6}$, as well as $\lambda = 1$ (top) and $\lambda = 10$ (bottom). As a baseline, we compare with the true classification $\eta^\dagger$ (red).}
    \label{Fig:AC-6}
\end{figure}

None of the stochastic approximations is able to retrieve the precise classification. This is expected due to the sparsity of the data. The stochasticity of the algorithm does not seem to pick up these regions of uncertainty: the variance is \review{only insignificantly} increased around the jumps. Hence, we see also here that  the randomisation of what is essentially an MBO scheme \review{does not necessarily lead to a useful uncertainty quantification.}

\subsection{Classification in 2D} \label{Subsec_Ill_class2}
\review{Finally, we study a classification problem in 2D -- indeed, a two-dimensional version of the problem above. Here, we have index set $I:= \{1,\ldots,200\}^2$ and try to identify a classification vector $\eta^\dagger \in \{-1, 1\}^{200 \times 200}$.  We show a plot of $\eta^\dagger$ in Figure~\ref{fig:org_2D}. When classifying, we observe every $5^{\rm th}$ column in every $5^{\rm th}$ row, i.e. entries with indices in $\{(i,j): i,j \in \{1,6,\ldots,196 \}\}$ accumulating to $4\%$ of the domain.}
\begin{figure}
    \centering
    \includegraphics[scale = 0.5]{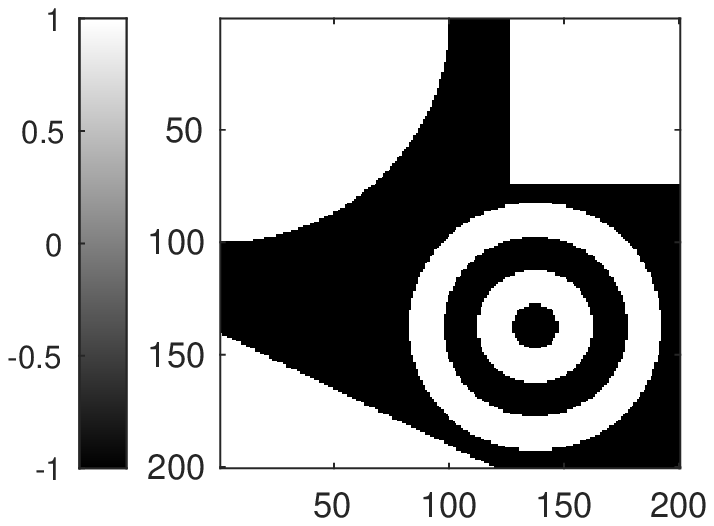}
    \caption{A plot of the underlying `true' classification $\eta^\dagger$ in the 2D example.}
    \label{fig:org_2D}
\end{figure}

\review{To employ the discrete Allen--Cahn equation, we construct a two-dimensional version of the one-dimensional Laplacian $\triangle'_1$ given in Subsection~\ref{Subsec_Ill_class1} by defining $\triangle_2' := \mathrm{Id}_X \otimes \triangle'_1 + \triangle'_1 \otimes \mathrm{Id}_{X}$. The discrete Allen--Cahn equation is now posed on the space $X := \mathbb{R}^{4\cdot 10^4}$. Solving the linear part of the problem \eqref{eq_AC_diff} by computing the matrix exponential at very high accuracy is hardly possible on such a high-dimensional space. Thus, we approximate a linear step to which we switch at time $T$ and that has a random step size $\delta T \sim \mathrm{Exp}(\lambda)$ by
$$
\left(\mathrm{Id}_{X}- \frac{\delta T}{2}(-\PtP+\varepsilon\triangle_2')\right)^{-1}\left(\xi(T-)+\frac{\delta T}{2}(-\PtP+\varepsilon\triangle_2')\xi(T-) + \delta T\Ptd\right).
$$
This corresponds to using a single step of the implicit midpoint rule or the Crank--Nicolson method \cite{crank_nicolson_1947} to integrate the ODE \eqref{eq_AC_diff}. See \cite{dubious} for other techniques to compute matrix exponentials and \cite{abdulle} for a discussion of ODE integrators with randomised step size. The thresholding step \eqref{eq_AC_thresh} is still evaluated accurately.}

\begin{figure}
    \centering
    \includegraphics[scale = 0.5]{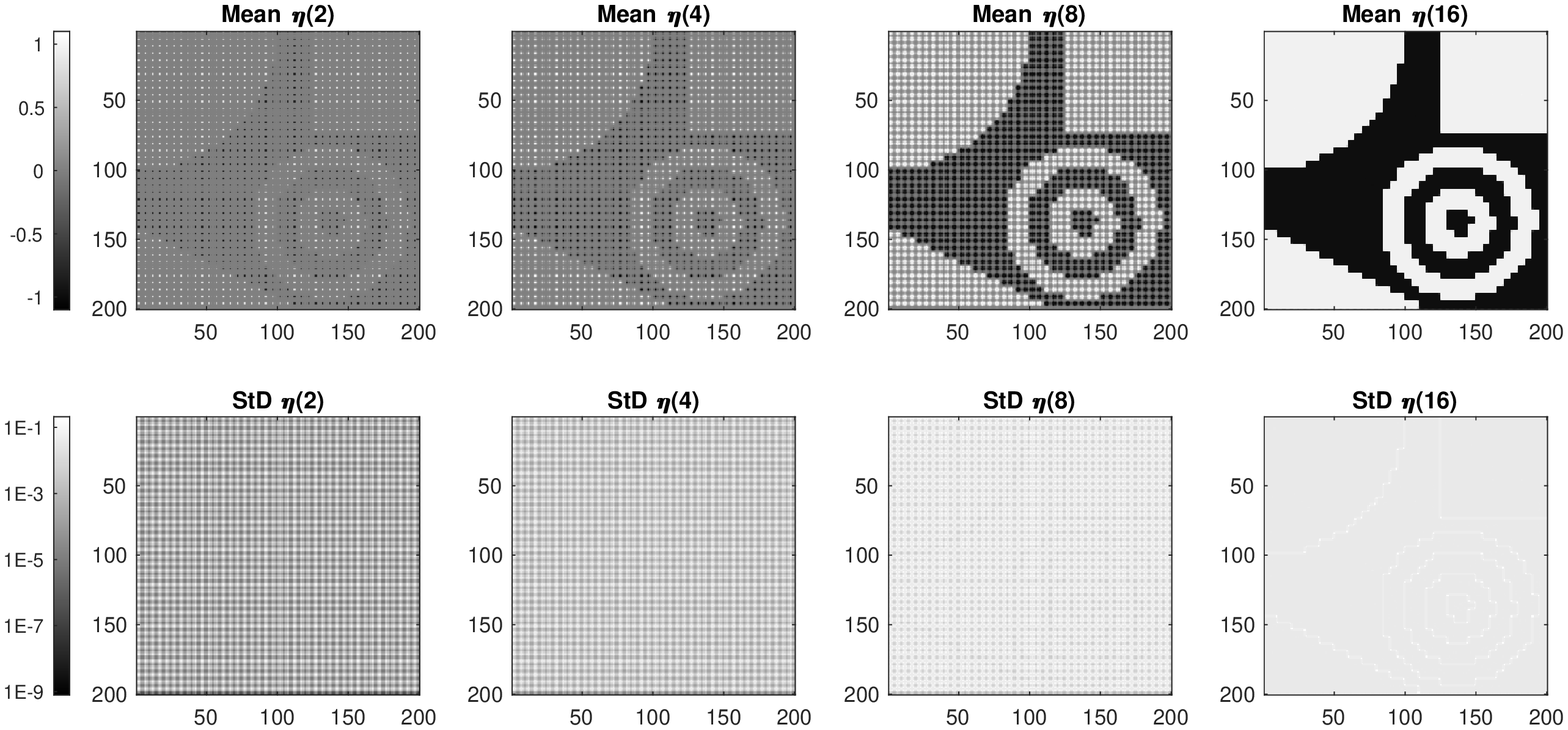}
     \caption{Plots of mean and standard deviations of the stochastic approximation of the discrete Allen--Cahn equation at times $t = 2, 4, 8, 16$ in the 2D experiment with $\varepsilon = 0.005$ and $\lambda = 20$. Means and standard deviations are computed over 100 experiments.}
    \label{fig:2D_epsi0_005}
\end{figure}

\begin{figure}
    \centering
    \includegraphics[scale = 0.5]{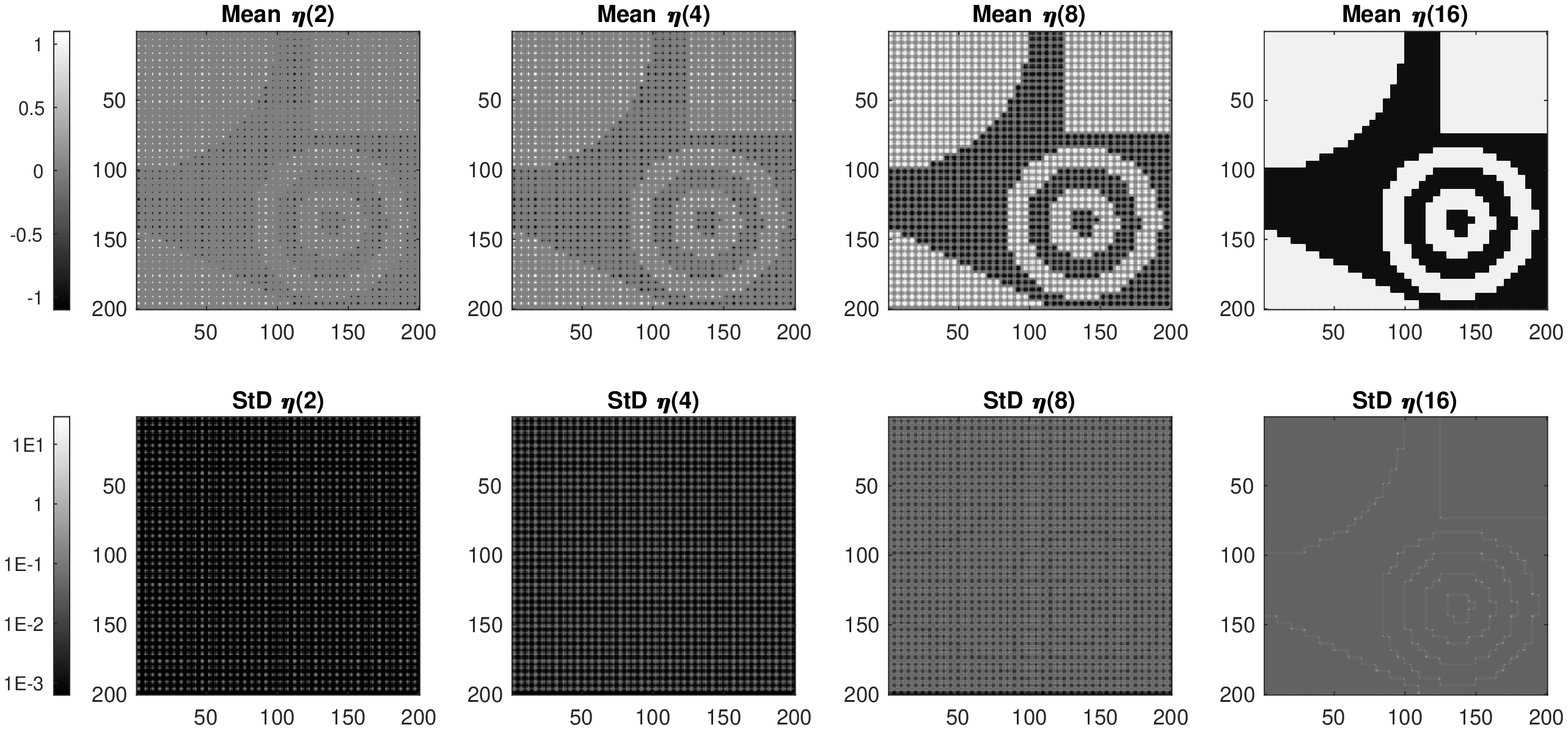}
    \caption{Plots of mean and standard deviations of the stochastic approximation of the discrete Allen--Cahn equation at times $t = 2, 4, 8, 16$ in the 2D experiment with $\varepsilon = 0.05$ and $\lambda = 20$. Means and standard deviations are computed over 100 experiments.}
    \label{fig:2D_epsi0_05}
\end{figure}

\review{We simulate the stochastic approximation for $\lambda = 20$, $\alpha := 1$,  $\varepsilon \in \{0.05, 0.005\}$, and initial condition $\eta(0) = 0$. We compute means and standard deviations  over 100 independent runs of $\eta(2), \eta(4), \eta(8), \eta(16)$. We present the results in Figures~\ref{fig:2D_epsi0_005} and \ref{fig:2D_epsi0_05}. Computing a realisation of $\eta(16)$ took on average $24.88(\pm 2.43)$ seconds.}

\review{The dynamical systems appear to behave similarly to the 1D non-discretised setting. In the beginning, the function fits the data at points of observation but remains 0 everywhere else. Over time, the function is slowly smoothed out leading to a good reconstruction of the true classification. In case $\varepsilon = 0.005$, we see all over smaller variances in the dynamical system. Moreover, at time  $t = 16$, we see an increment of the standard deviation around the interface in the reconstruction showing the uncertainty around the interface position. Unfortunately, this area is very small and does, e.g., not incorporate the truth. The numerical approximation of the linear part appears to not severely damage the dynamical system, as long as the waiting times are sufficiently small with high probability. Thus, a numerical approximation of the linear ODE appears to be a suitable technique, if $\lambda$ is sufficiently large.}

\section{Conclusions and outlook} \label{Sec_Conclusions}
In this work, we have proposed and analysed randomised, continuous-time splitting methods for sparse inversion and binary classification. The methods arise as stochastic approximations of the original dynamical systems, a sparse inversion flow, and a discrete Allen--Cahn equation. To obtain the stochastic approximation, we split the non-linear and the non-smooth part into two differential equations/inclusions. Each of these dynamical systems can be solved analytically or numerically \review{in an accurate and stable way}.  To approximate we alternately follow one of these subflows for an exponentially distributed waiting before switching. The non-smooth part leads in both cases to a randomised thresholding.

We give assumptions under which the stochastic approximations are exponentially ergodic, i.e., they converge quickly to their unique stationary measure. Moreover, we show that the stochastic approximations can indeed approximate the underlying dynamical systems at any accuracy. We finally show the applicability of our method in simple numerical experiments.

We finish this work by mentioning two possible extensions to the presented methods:

\paragraph{Data subsampling.} In case the data sets $b, d$ are very large, even a simple numerical approximation of the linear parts of the algorithms may be prohibitive. The traditional stochastic gradient descent method solves this problem by subsampling the data, i.e., partitioning the data vectors and optimising only with respect to one of the data subsets at a time. \review{The recent work} \cite{Latz2021} presents a natural way to introduce subsampling in\review{to} our stochastic approximations of the sparse inversion flow and the discrete Allen--Cahn equation.

\paragraph{Stochastic approximation of primal lower nice functions.} In this work, we focussed on two particular dynamical systems -- as opposed to the general settings we started with in Subsection~\ref{subsec_Problem_Setting}. Moreover, we have noticed that the theory of primal lower nice functions appears to be particularly suitable for our objectives. It would be interesting to see which of the statements made in this work still hold when replacing the given potentials by general primal lower nice functions. \review{Practically, this would allow to analyse stochastic splitting techniques for much more complicated non-smooth dynamical systems, such as the ROF flow and classification with other double-well potentials.}

\bibliography{library}
\bibliographystyle{plain}
\end{document}